\DeclareFontFamily{U}{wncy}{}
\DeclareFontShape{U}{wncy}{m}{n}{<->wncyr10}{}
\DeclareSymbolFont{mcy}{U}{wncy}{m}{n}
\DeclareMathSymbol{\Sh}{\mathord}{mcy}{"58}
\newcommand{\Rbb}{\mathbb{R}}
\newcommand{\Fbb}{\mathbb{F}}
\newcommand{\Qbb}{\mathbb{Q}}
\newcommand{\Zbb}{\mathbb{Z}}
\newcommand{\Fpbb}{\mathbb{F}_{p}}
\newcommand{\Qpbb}{\mathbb{Q}_{p}}
\newcommand{\Zpbb}{\mathbb{Z}_{p}}
\newcommand{\Pbb}{\mathbb{P}}
\newcommand{\Abb}{\mathbb{A}}
\newcommand{\Gmbb}{\mathbb{G}_{m}}
\newcommand{\ds}{\displaystyle}
\theoremstyle{definition}
\newtheorem{definition}{Definition}[section]
\newtheorem{remark}[definition]{Remark}
\newtheorem{example}[definition]{Example}
\theoremstyle{plain}
\newtheorem{theorem}{Theorem}[section]
\newtheorem{proposition}[theorem]{Proposition}
\newtheorem{corollary}[theorem]{Corollary}
\newtheorem{conjecture}[theorem]{Conjecture}
\begin{document}



\title{\textbf{Brauer--Manin obstruction for Wehler K3 surfaces of Markoff type}}
	\author{\textsc{Quang-Duc DAO}} 
	\date{}
	\maketitle

\begin{abstract}
Following recent work by E. Fuchs \textit{et al.} \cite{FLST22}, we study the Brauer--Manin obstruction for integral points on Wehler K3 surfaces of Markoff type. In particular, we construct some families which fail the integral Hasse principle via the Brauer--Manin obstruction with some counting results of similar nature to those in \cite{GS22}, \cite{LM20} and \cite{CTWX20}. We also give some counterexamples to strong approximation (where integral points can exist) which can be explained by the Brauer--Manin obstruction, and study a few aspects of rational points on affine surfaces.
\end{abstract}

\section{Introduction}
Let $X$ be an affine variety over $\Qbb$, and $\mathcal{X}$ an integral model of $X$ over $\Zbb$, i.e. an affine scheme of finite type over $\Zbb$ whose generic fiber is isomorphic to $U$. Define the set of adelic points $X(\textbf{\textup{A}}_{\Qbb}) := \sideset{}{'}\prod_{p} X(\Qpbb)$, where $p$ is a prime number or $p = \infty$ (with $\Qbb_{\infty} = \Rbb$). Similarly, define $\mathcal{X}(\textbf{A}_{\Zbb}) := \prod_{p} \mathcal{X}(\Zpbb)$ (with $\Zbb_{\infty} = \Rbb$). 

We say that $X$ \textit{fails the Hasse principle} (resp. $\mathcal{X}$ \textit{fails the integral Hasse principle}) if 
$$ X(\textbf{\textup{A}}_{\Qbb}) \not= \emptyset \hspace{0.5cm}\textup{but}\hspace{0.5cm} X(\Qbb) = \emptyset $$
$$ (\textup{resp.}\; \mathcal{X}(\textbf{A}_{\Zbb}) \not= \emptyset \hspace{0.5cm}\textup{but}\hspace{0.5cm} \mathcal{X}(\Zbb) = \emptyset.) $$
We say that $X$ \textit{satisfies weak approximation} if the image of $X(\Qbb)$ in $\prod_{v} X(\Qbb_{v})$ is dense, where the product is taken over all places of $\Qbb$. Finally, we say that $\mathcal{X}$ \textit{satisfies strong approximation} if $\mathcal{X}(\Zbb)$ is dense in $\mathcal{X}(\textbf{A}_{\Zbb})_{\bullet} := \prod_{p} \mathcal{X}(\Zpbb) \times \pi_{0}(X(\Rbb))$, where $\pi_{0}(X(\Rbb))$ denotes the set of connected components of $X(\Rbb)$. 

In general, few varieties satisfy the Hasse principle. In 1970, Manin introduced a natural cohomological obstruction to the Hasse principle, namely the \textbf{Brauer--Manin obstruction} (which has been extended to its integral version in \cite{CTX09}). If $\textup{Br}\,X$ denotes the cohomological Brauer group of $X$, i.e. $\textup{Br}\,X := \textup{H}^{2}_{\textup{ét}}(X,\Gmbb)$, we have a natural pairing from class field theory:
$$ X(\textbf{\textup{A}}_{\Qbb}) \times \textup{Br}\,X \rightarrow \Qbb/\Zbb. $$
If we define $X(\textbf{\textup{A}}_{\Qbb})^{\textup{Br}}$ to be the left kernel of this pairing, then the exact sequence of Albert--Brauer--Hasse--Noether gives us the relation:
$$ X(\Qbb) \subseteq X(\textbf{\textup{A}}_{\Qbb})^{\textup{Br}} \subseteq X(\textbf{\textup{A}}_{\Qbb}). $$
Similarly, by defining the Brauer--Manin set $\mathcal{X}(\textbf{A}_{\Zbb})^{\text{Br}}$, we also have that 
$$ \mathcal{X}(\Zbb) \subseteq \mathcal{X}(\textbf{A}_{\Zbb})^{\text{Br}} \subseteq \mathcal{X}(\textbf{A}_{\Zbb}). $$
This gives the so-called \textit{integral} Brauer--Manin obstruction. We say that \textit{the Brauer--Manin obstruction to the (resp. integral) Hasse principle is the only one} if 

$$ X(\textbf{A}_{\Qbb})^{\text{Br}} \not= \emptyset \iff X(\Qbb) \not= \emptyset.$$

$$ (\mathcal{X}(\textbf{A}_{\Zbb})^{\text{Br}} \not= \emptyset \iff \mathcal{X}(\Zbb) \not= \emptyset.)$$

We are interested particularly in the case where $X$ is a hypersurface, defined by a polynomial equation of degree $d$ in an affine space. The case $d = 1$ is easy and elementary. The case $d = 2$ considers the arithmetic of quadratic forms: for rational points, the Hasse principle is always satisfied by the Hasse--Minkowski theorem, and for integral points, the Brauer--Manin obstruction to the integral Hasse principle is the only one (up to an isotropy assumption) due to work of Colliot-Thélène, Xu \cite{CTX09} and Harari \cite{Ha08}.

However, the case $d = 3$ (of cubic hypersurfaces) is still largely open, especially for integral points. Overall, the arithmetic of integral points on the affine cubic surfaces over number fields is still little understood. An interesting example of affine cubic surfaces is given by \textbf{Markoff surfaces} $U_{m}$ which are defined by 
$$ x^{2} + y^{2} + z^{2} - xyz = m, $$ 
where $m$ is an integer parameter. In \cite{GS22}, Ghosh and Sarnak study the integral points on those affine Markoff surfaces $U_{m}$ both from a theoretical point of view and from numerical evidence. They prove that for almost all $m$, the integral Hasse principle holds, and that there are infinitely many $m$'s for which it fails. Furthermore, their numerical experiments suggest particularly a proportion of integers $m$ satisfying $|m| \leq M$ of the power $M^{0,8875\dots+o(1)}$ for which the integral Hasse principle is not satisfied.

Subsequently, Loughran and Mitankin \cite{LM20} proved that asymptotically only a proportion of $M^{1/2}/(\log M)^{1/2}$ of integers $m$ such that $-M \leq m \leq M$ presents an integral Brauer--Manin obstruction to the Hasse principle. They also obtained a lower bound, asymptotically $M^{1/2}/\log M$, for the number of integral Hasse failures which cannot be explained by the Brauer--Manin obstruction. After Colliot-Thélène, Wei, and Xu \cite{CTWX20} obtained a slightly stronger lower bound than the one given in \cite{LM20}, no better result than their number $M^{1/2}/(\log M)^{1/2}$ has been known until now. In other words, with all the current results, one does not have a satisfying comparison between the numbers of Hasse failures which can be explained by the Brauer--Manin obstruction and which cannot be explained by this obstruction. Meanwhile, for strong approximation, it has been proven to almost never hold for Markoff surfaces in \cite{LM20} and then absolutely never be the case in \cite{CTWX20}. 
Here we recall an important conjecture given by Ghosh and Sarnak.

\begin{conjecture}[Conjecture 10.2 in \cite{GS22}]
The number of integral Hasse failures satisfies that
$$ \# \{m \in \Zbb: 0 \leq m \leq M,\ \mathcal{U}_{m}(\Abb_{\Zbb}) \not= \emptyset\ \text{but}\ \mathcal{U}_{m}(\Zbb) = \emptyset \} \approx C_{0}M^{\theta}, $$
for some $C_{0} > 0$ and some $\frac{1}{2} < \theta < 1$.
\end{conjecture}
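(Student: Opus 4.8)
This statement is a conjecture rather than a theorem, so what follows is a proposal for a line of attack, and even the exponent $\theta$ is at present only predicted numerically in \cite{GS22} (their experiments suggesting $\theta \approx 0.8875$). The plan is to split the count into two pieces: the $m$ for which the failure is explained by the Brauer--Manin obstruction, and the $m$ for which it is not. For the first piece there is a firm starting point: the group $\textup{Br}\,\mathcal{U}_m/\textup{Br}\,\Qbb$ has been computed (it is at most $\Zbb/2$), and Loughran--Mitankin \cite{LM20} show that the set of $m$ with $|m| \le M$ carrying such an obstruction has cardinality of order $M^{1/2}/(\log M)^{1/2}$. Since $\tfrac{1}{2} < \theta$, this piece is negligible, and the whole content of the conjecture lies in counting the \emph{non-Brauer--Manin} Hasse failures — precisely the failures which are, by construction, invisible to the standard cohomological machinery.

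First I would set up the descent of Ghosh--Sarnak in quantitative form. A point of $\mathcal{U}_m(\Zbb)$ generates an orbit under the nonlinear Markoff group (the Vieta involutions $z \mapsto xy - z$ together with the coordinate permutations), and every orbit contains a \emph{fundamental} solution whose coordinates are bounded polynomially in $|m|$; conversely, solubility at all places holds for a positive proportion of $m$ and is governed by congruence conditions modulo a fixed modulus together with a sign condition at $\infty$. Hence $\mathcal{U}_m$ is an integral Hasse failure exactly when $m$ is everywhere locally soluble but the finite search over candidate fundamental solutions in the relevant box returns nothing. The next step would be to estimate, as $m$ ranges over the locally soluble set, how often this finite search is empty. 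Following the heuristic of \cite{GS22} this reduces to a sieve: one models the reductions of the putative Markoff orbit modulo primes $p$ dividing an auxiliary quantity as quasi-random — this is where the connectivity of the Markoff graph modulo $p$, in the strong form of Bourgain--Gamburd--Sarnak, must enter — and multiplies the resulting local densities. The prediction is that the density of failures inside a dyadic block $|m| \asymp M$ decays like a power $M^{-(1-\theta)}$, and summing over dyadic blocks yields the $M^{\theta}$ of the conjecture, with $\theta$ the exponent produced by the sieve.

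The final step, and the one I expect to be the main obstacle, is to make this sieve \emph{unconditional and two-sided}. For the lower bound $\gg M^{\theta}$ one must exhibit genuinely many $m$ with no integral point, and the only known mechanism for certifying $\mathcal{U}_m(\Zbb) = \emptyset$ without a Brauer--Manin obstruction is the Ghosh--Sarnak congruence obstruction, which after \cite{CTWX20} still yields only $\gg M^{1/2}/\log M$ — far short of $M^{\theta}$ with $\theta \approx 0.89$. Closing this gap appears to require an effective, explicit version of the Bourgain--Gamburd--Sarnak mixing estimates that is uniform in the modulus, which is not presently available. For the upper bound $\ll M^{\theta}$ one must show that whenever the congruence obstruction is absent an integral point genuinely exists — i.e. that the fundamental-solution search cannot fail "by accident" — which amounts to ruling out any further local-to-global obstruction beyond the Brauer group and the Markoff congruence obstruction, and would in particular settle the qualitative part of the Ghosh--Sarnak program. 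In short, the cohomological input handles only the $M^{1/2}$-size part of the problem, and the bulk of the conjectured count sits in an arithmetic-combinatorial regime — the statistics of the Markoff tree modulo varying moduli — where current technology gives bounds of the wrong order of magnitude; bridging that is the heart of the matter, and I would not expect a short proof.
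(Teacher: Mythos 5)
There is nothing in the paper to compare your attempt against: the statement is an open conjecture (Conjecture 10.2 of \cite{GS22}) which the paper merely \emph{recalls} for context, and the paper offers no proof of it. Your write-up correctly identifies this status and does not claim to prove the statement, so there is no error of the usual kind; but equally it cannot be accepted as a proof, since every substantive step is deferred. Your framing is consistent with the paper's own commentary: the remark immediately following the conjecture makes exactly your opening point, namely that by \cite{LM20} the Brauer--Manin-obstructed failures number only about $M^{1/2}/(\log M)^{1/2}$, so the conjecture asserts that almost all integral Hasse failures for Markoff surfaces are invisible to the Brauer--Manin obstruction.

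As a research sketch the proposal locates the genuine difficulties accurately, and it is worth being explicit that each of them is an open problem rather than a routine step. For the lower bound, the only unconditional mechanisms currently known for certifying $\mathcal{U}_m(\Zbb)=\emptyset$ (Brauer--Manin, and the congruence/quadratic-reciprocity obstructions of \cite{GS22} as refined in \cite{CTWX20}) all produce sets of size $O(M^{1/2+\epsilon})$, which is strictly smaller than $M^{\theta}$ for any $\theta>1/2$; so no presently available argument can even establish the lower bound implicit in the conjectured asymptotic. For the upper bound, one would need to show that local solubility plus the absence of these known obstructions forces an integral point, which is precisely the unresolved qualitative part of the Ghosh--Sarnak program. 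The sieve heuristic you describe (modelling the Markoff orbit reductions as quasi-random, using strong connectivity of the Markoff graph mod $p$) is the source of the numerical prediction $\theta\approx 0.8875$ in \cite{GS22}, but it has not been made rigorous in either direction. In short: your assessment is sound, but the statement remains a conjecture, and your text should be presented as a discussion of its status rather than as a proof attempt.
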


The above conjecture also means that \emph{almost all} counterexamples to the integral Hasse principle for Markoff surfaces cannot be explained by the Brauer--Manin obstruction, thanks to the result obtained by \cite{LM20}. 
\\~\\
\indent In recent work \cite{Dao22}, we study the set of integral points of a different Markoff-type cubic surfaces whose origin is similar to that of the original Markoff surfaces $U_{m}$, namely the \textit{relative character varieties}, using the Brauer--Manin obstruction as well. The surfaces are given by the cubic equation:

$$ x^{2} + y^{2} + z^{2} + xyz = ax + by + cz + d, $$

\noindent where $a,b,c,d \in \Zbb$ are parameters which satisfy some specific relations (see \cite{CL09}). Due to the similar appearance to the original Markoff surfaces, one may expect to find some similarities in their arithmetic properties. One of the main results in that paper is the following, saying that a positive proportion of these relative character varieties have no (algebraic) Brauer--Manin obstruction to the integral Hasse principle as well as fail strong approximation, and those failures can be explained by the Brauer--Manin obstruction.

\begin{theorem}
Let $\mathcal{U}$ be the affine scheme over $\Zbb$ defined by 
\begin{equation*}
x^{2} + y^{2} + z^{2} + xyz = ax + by + cz + d,
\end{equation*}
where 
\begin{equation*}
	\begin{cases*}
	a = k_{1}k_{2} + k_{3}k_{4} \\
	b = k_{1}k_{4} + k_{2}k_{3} \\
	c = k_{1}k_{3} + k_{2}k_{4}
	\end{cases*}
	\hspace{0.5cm} \textup{and} \hspace{0.5cm} d = 4 - \sum_{i=1}^{4} k_{i}^{2} - \prod_{i=1}^{4} k_{i}, 
\end{equation*}
such that its natural compactification $X$ is smooth over $\Qbb$. Then we have
$$ \#\{ k = (k_{1},k_{2},k_{3},k_{4}) \in \Zbb^{4}, |k_{i}| \leq M\; \forall\; 1 \leq i \leq 4 : \emptyset \not= \mathcal{U}(\textbf{\textup{A}}_{\Zbb})^{\textup{Br}_{1}} \not= \mathcal{U}(\textbf{\textup{A}}_{\Zbb}) \} \asymp M^{4} $$
as $M \rightarrow +\infty$.
\end{theorem}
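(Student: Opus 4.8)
The upper bound $\ll M^{4}$ is trivial, since the box $|k_{i}|\le M$ contains only $(2M+1)^{4}$ quadruples; all the content is the lower bound $\gg M^{4}$. I emphasise that this is the \emph{soft} regime: we are not asked to exhibit a Brauer--Manin obstruction to the integral Hasse principle (a density-zero event for the classical Markoff family by \cite{LM20}), but only, for a positive proportion of $k$, a failure of strong approximation that is accounted for by the algebraic Brauer group, with the integral Brauer--Manin set still nonempty. The plan is to produce one set of conditions on $k=(k_{1},k_{2},k_{3},k_{4})$ — a finite list of congruences together with a finite list of nonvanishing/nonsquare conditions on fixed polynomials in $k$ — cutting out a positive-density subset of the box and under which: (A) $\textup{Br}_{1}U/\textup{Br}\,\Qbb\cong\Zbb/2$, with a generator $\mathcal{A}\in\textup{Br}\,\mathcal{U}$ given by an explicit quaternion algebra; (B) $\mathcal{U}(\Zbb_{v})\neq\emptyset$ at every place $v$, so $\mathcal{U}(\textbf{\textup{A}}_{\Zbb})\neq\emptyset$; and (C) the evaluation $\mathrm{inv}_{p_{0}}\mathcal{A}$ is nonconstant on $\mathcal{U}(\Zbb_{p_{0}})$ for one chosen prime $p_{0}$. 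As explained below, (A)--(C) force $\emptyset\neq\mathcal{U}(\textbf{\textup{A}}_{\Zbb})^{\textup{Br}_{1}}\neq\mathcal{U}(\textbf{\textup{A}}_{\Zbb})$; and since the defining conditions have positive density while the smoothness hypothesis on $X$ excludes only the $O(M^{3})$ quadruples on the nonzero discriminant hypersurface, the count is $\gg M^{4}$, hence $\asymp M^{4}$.

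\textbf{Step 1 (geometry and Brauer group).} Homogenising in $\Pbb^{3}$ with coordinate $w$, the plane $w=0$ meets $X$ in the triangle of lines $\{xyz=0\}\cap\{w=0\}$, so $U$ is the complement in the cubic surface $X$ of a triangle $L_{1}\cup L_{2}\cup L_{3}$, exactly as for the classical Markoff surface. One computes $\mathrm{Pic}\,\overline{U}=\mathrm{Pic}\,\overline{X}/\langle L_{1},L_{2},L_{3}\rangle$ as a Galois module from the configuration of the $27$ lines; since $\overline{\Qbb}[\overline{U}]^{\times}=\overline{\Qbb}^{\times}$, the Hochschild--Serre spectral sequence gives $\textup{Br}_{1}U/\textup{Br}\,\Qbb\cong H^{1}(\Qbb,\mathrm{Pic}\,\overline{U})$. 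I expect this to be $\Zbb/2$, generated by a quaternion class $\mathcal{A}=(f,g)$ with $f$ a ratio of two boundary forms and $g$ a quadratic resolvent built from $a,b,c,d$ — of the nature of the discriminants $z^{2}-4$ controlling the boundary conics $x^{2}+zxy+y^{2}=\cdots$, rewritten via $a=k_{1}k_{2}+k_{3}k_{4}$ and the other relations — precisely when this resolvent is a nonsquare and the Galois action on the lines is otherwise as large as possible; both are conditions on $k$ of the stated type. Following the Markoff treatments of \cite{LM20} and \cite{CTWX20} I would make $f$ and $g$ explicit and check that $\mathcal{A}$ is unramified on $U$ and extends over $\mathcal{U}$.

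\textbf{Step 2 (local analysis and reciprocity).} For a suitable congruence class of $k$ modulo a fixed $N$ (a power of $2$ times a few small odd primes) one checks $\mathcal{U}(\Zbb_{v})\neq\emptyset$ for all $v$: at $\infty$ the congruences force a real point; at $p\mid N$ by writing down explicit points; at $p\nmid N$ of good reduction by the Weil bound $\#\mathcal{U}(\Fpbb)>0$ and Hensel; and at the remaining large primes of bad reduction — those dividing $d=4-\sum k_{i}^{2}-\prod k_{i}$ and the other discriminant-type quantities — by observing that the degeneration is mild, a single node say, so the special fibre still carries a smooth point that lifts. The key further point is that $\mathrm{inv}_{v}\mathcal{A}$ is \emph{constant} on $\mathcal{U}(\Zbb_{v})$ at every $v\neq p_{0}$ (identically $0$ at the good primes, where $\mathcal{A}$ pulls back into $\textup{Br}\,\Zbb_{v}=0$, and constant at the finitely many bad ones — this last being the delicate part), while at $p_{0}$ one exhibits two points of $\mathcal{U}(\Zbb_{p_{0}})$ on which $\mathrm{inv}_{p_{0}}\mathcal{A}$ takes its two possible values. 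The first statement already gives $\mathcal{U}(\textbf{\textup{A}}_{\Zbb})^{\textup{Br}_{1}}\neq\mathcal{U}(\textbf{\textup{A}}_{\Zbb})$; for nonemptiness, fix arbitrary local points away from $p_{0}$, so $\sum_{v\neq p_{0}}\mathrm{inv}_{v}\mathcal{A}\in\tfrac12\Zbb/\Zbb$, and since the two values available at $p_{0}$ also differ by $\tfrac12$ one of the two choices makes the global sum vanish, producing a point of $\mathcal{U}(\textbf{\textup{A}}_{\Zbb})^{\textup{Br}_{1}}$ — no other class can obstruct, as $\textup{Br}_{1}U/\textup{Br}\,\Qbb$ is generated by $\mathcal{A}$.

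\textbf{Step 3 (counting, and the main obstacle).} The conditions collected above are finitely many congruences modulo $N$, the real-sign conditions (absorbed into $N$), and finitely many nonsquare/nonvanishing conditions on fixed polynomials in $k$; the former cut out a fixed nonempty union of residue classes, contributing $\gg M^{4}$ quadruples in the box, and the latter remove only a thin set of density $0$ (by the large sieve, the relevant polynomials not being squares), so the admissible set still has size $\gg M^{4}$; after discarding the $O(M^{3})$ non-smooth quadruples this survives, and with the trivial upper bound we get $\asymp M^{4}$. The real difficulty, and the heart of the argument, is Step 2's assertion of \emph{everywhere} local solubility together with constancy of $\mathrm{inv}_{v}\mathcal{A}$ off $p_{0}$, uniformly over a positive-density family: one must control the large primes of bad reduction, which vary with $k$ and cannot be swept into $N$, showing both that $\mathcal{U}$ still has a $\Zbb_{p}$-point there and that $\mathcal{A}$ does not accidentally obstruct — and it is exactly this requirement that dictates the precise choice of the boundary forms $f$ and the resolvent $g$ entering $\mathcal{A}$.
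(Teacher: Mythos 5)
You should first be aware that the statement you were given is Theorem 1.2 of the introduction, which this paper merely \emph{quotes} from the author's earlier work \cite{Dao22} on relative character varieties; no proof of it appears anywhere in the present paper, so there is no in-paper argument to compare yours against line by line. Judged on its own terms, your overall strategy --- compute $\textup{Br}_{1}\,U/\textup{Br}\,\Qbb$ via Hochschild--Serre from the Galois action on the lines at infinity, exhibit explicit quaternion generators, impose congruence conditions on $k$ guaranteeing everywhere local solubility, show the local invariants are constant away from one prime $p_{0}$ and non-constant at $p_{0}$, and then count the admissible $k$ by positive density of congruence classes --- is exactly the template of \cite{LM20} and \cite{CTWX20} and of Sections 4--5 of the present paper for the K3 analogues, and is surely the route taken in the cited source. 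Your reciprocity argument in Step 2 (properness from non-constancy at $p_{0}$, nonemptiness from correcting the global sum by the choice at $p_{0}$) is sound \emph{for a cyclic group of order two}, and Step 3 is unobjectionable.

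There are, however, two genuine gaps. First, the text is a programme rather than a proof: the generators of the Brauer group are never written down (``I expect this to be $\Zbb/2$'', ``I would make $f$ and $g$ explicit''), and the two assertions you yourself call ``the heart of the argument'' --- local solubility at the large primes of bad reduction dividing $d$ and the discriminant-type quantities, which vary with $k$ and cannot be absorbed into a fixed modulus, and the constancy of $\textup{inv}_{v}\mathcal{A}$ at exactly those primes --- are precisely the points left unverified; without them neither $\mathcal{U}(\textbf{\textup{A}}_{\Zbb})\not=\emptyset$ nor the off-$p_{0}$ constancy is established for a positive proportion of $k$. Second, the structural hypothesis $\textup{Br}_{1}\,U/\textup{Br}\,\Qbb\cong\Zbb/2\Zbb$ is almost certainly wrong for this family: for Markoff-type cubic surfaces the generic answer is $(\Zbb/2\Zbb)^{2}$ (compare the analogous computations in \cite{LM20}, \cite{CTWX20}, and the groups $(\Zbb/2\Zbb)^{3}$ and $(\Zbb/2\Zbb)^{4}$ obtained in Section 3.3 here for the MK3 surfaces), and this is not cosmetic. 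With two independent generators $\mathcal{A}_{1},\mathcal{A}_{2}$, nonemptiness of $\mathcal{U}(\textbf{\textup{A}}_{\Zbb})^{\textup{Br}_{1}}$ requires an adelic point killing both global sums simultaneously; knowing that each $\textup{inv}_{p_{0}}\mathcal{A}_{i}$ separately takes both values does not imply that the joint map $x\mapsto(\textup{inv}_{p_{0}}\mathcal{A}_{1}(x),\textup{inv}_{p_{0}}\mathcal{A}_{2}(x))$ attains the particular pair needed to cancel the contribution from the other places. You would need to show this joint map is surjective onto $(\tfrac{1}{2}\Zbb/\Zbb)^{2}$ (or allow corrections at a second prime); your single-generator reciprocity step does not cover this, and it is exactly the kind of verification that the explicit local computations in Sections 4--5 of this paper are designed to carry out.
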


Finally, in this paper, we are going to study certain other analogous varieties, in the world of K3 surfaces instead of cubic surfaces. Let $K$ be a number field. Let $X \subset \Pbb^{1} \times \Pbb^{1} \times \Pbb^{1}$ be a smooth K3 surface over $K$, given by a $(2, 2, 2)$ form
$$ F(X_{1}, X_{2} ; Y_{1}, Y_{2} ; Z_{1}, Z_{2}) \in K[X_{1}, X_{2} ; Y_{1}, Y_{2} ; Z_{1}, Z_{2}]. $$
Then $X$ is called a \emph{Wehler} K3 surface; in particular, $X$ is an elliptic surface whose projections $p_{i} : X \rightarrow \Pbb^{1}$ ($i \in \{1,2,3\}$) have fibers as curves of arithmetic genus $1$.

A \textbf{Markoff-type K3 surface} $W$ is a Wehler K3 surface whose $(2, 2, 2)$-form $F$ is invariant under the action of the group $\mathcal{G} \subset \text{Aut}(\Pbb^{1} \times \Pbb^{1} \times \Pbb^{1})$ generated by $(x,y,z) \mapsto (-x,-y,z)$ and permutations of $(x,y,z)$. By \cite{FLST22}, there exist $a, b, c, d, e \in k$ so that the $(2, 2, 2)$-form $F$ that defines $W$ has the affine form:
$$ ax^{2}y^{2}z^{2} + b(x^{2}y^{2} + y^{2}z^{2} + z^{2}x^{2}) + cxyz + d(x^{2} + y^{2} + z^{2}) + e = 0. $$

Our main results show the Brauer--Manin obstructions with respect to explicit elements of the algebraic Brauer groups for the existence of integral points on \textbf{three} concrete families of Markoff-type K3 surfaces (MK3 surfaces). One of them, as the most \emph{general} one, is the following.

\begin{theorem}
For $k \in \Zbb$, let $W_{k} \subset \Pbb^{1} \times \Pbb^{1} \times \Pbb^{1}$ be the MK3 surfaces defined over $\Qbb$ by the $(2,2,2)$-form 
\begin{equation}
    F_{3}(x,y,z) = x^{2} + y^{2} + z^{2} + 4(x^{2}y^{2} + y^{2}z^{2} + z^{2}x^{2}) - 16x^{2}y^{2}z^{2} - k = 0.
\end{equation}
Let $\mathcal{U}_{k}$ be the integral model of $U_{k}$ defined over $\Zbb$ by the same equation. If $k$ satisfies the conditions:
\begin{enumerate}
    \item $k = -\frac{1}{4}(1 + 27\ell^{2})$ where $\ell \in \Zbb$ such that $\ell \equiv \pm 1$ \textup{mod} $8$, $\ell \equiv 1$ \textup{mod} $5$, $\ell \equiv 3$ \textup{mod} $7$, and $\ell \not\equiv \pm 10$ \textup{mod} $37$;
    \item $p \equiv \pm 1$ \textup{mod} $24$ for any prime divisor $p$ of $\ell$,
\end{enumerate}
then there is an algebraic Brauer--Manin obstruction to the integral Hasse principle on $\mathcal{U}_{k}$ with respect to the subgroup $A \subset \textup{Br}_{1}\,U_{k}/\textup{Br}_{0}\,U_{k}$ generated by the elements $\mathcal{A}_{1} = (4x^{2}+1, -2(4k+1))$ and $\mathcal{A}_{2} = (4y^{2}+1, -2(4k+1))$, i.e., $\mathcal{U}_{k}(\Zbb) \subset \mathcal{U}_{k}(\textbf{\textup{A}}_{\Zbb})^{A} = \emptyset$.
\end{theorem}

Our final result deals with the counting problem on the number of counterexamples to the integral Hasse principle for Wehler K3 surfaces of Markoff type. Recall that for Markoff surfaces, Loughran and Mitankin \cite{LM20} proved that asymptotically only a proportion of $M^{1/2}/(\log M)^{1/2}$ of integers $m$ such that $|m| \leq M$ presents an integral Brauer--Manin obstruction.

\begin{theorem}
For the above three families of MK3 surfaces, we have
	$$ \# \{k \in \Zbb: |k| \leq M,\ \mathcal{U}_{k}(\textbf{\textup{A}}_{\Zbb}) \not= \emptyset,\ \mathcal{U}_{k}(\textbf{\textup{A}}_{\Zbb})^{\textup{Br}} = \emptyset \} \gg \frac{M^{1/2}}{\textup{log}\,M}, $$
as $M \rightarrow +\infty$.
\end{theorem}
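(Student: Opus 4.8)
The plan is to deduce this lower bound from the three Brauer--Manin obstruction theorems by passing, for each family, to a one-parameter subfamily indexed by a prime, and then to count via the prime number theorem in arithmetic progressions. I will spell out the argument for the family cut out by $F_{3}$ above; the other two families are governed by conditions of exactly the same shape — finitely many congruences on an integer parameter $\ell$, together with a condition forcing every prime divisor of $\ell$ into a fixed union of residue classes modulo a fixed integer — so the same reasoning applies verbatim.

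First I would record the reduction step. For $k = -\tfrac14(1+27\ell^{2})$ with $\ell$ satisfying conditions (1)--(2), the theorem gives $\mathcal{U}_{k}(\textbf{\textup{A}}_{\Zbb})^{A} = \emptyset$ with $A \subseteq \textup{Br}_{1}\,U_{k}/\textup{Br}_{0}\,U_{k} \subseteq \textup{Br}\,U_{k}/\textup{Br}_{0}\,U_{k}$, whence $\mathcal{U}_{k}(\textbf{\textup{A}}_{\Zbb})^{\textup{Br}} = \emptyset$; moreover the construction of local points carried out in the course of proving that theorem shows $\mathcal{U}_{k}(\textbf{\textup{A}}_{\Zbb}) \neq \emptyset$, so each such $k$ is counted. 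Since $|{-}\tfrac14(1+27\ell^{2})| \leq M$ is equivalent to $|\ell| \leq \sqrt{(4M-1)/27}$, it remains only to bound below the number of admissible $\ell$ with $|\ell| \asymp M^{1/2}$.

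The second step is to restrict to $\ell = p$ prime. Then condition (2) collapses to the single congruence $p \equiv \pm 1 \pmod{24}$, and combined with condition (1) the constraints say precisely that $p$ lies in a certain nonempty union of residue classes modulo $N := \mathrm{lcm}(24,5,7,37) = 31080$, every one of which is coprime to $N$: the conditions $p \equiv \pm 1 \pmod{24}$, $p \equiv 1 \pmod 5$, $p \equiv 3 \pmod 7$ are each imposed to a modulus to which the prescribed residue is a unit, and $p \not\equiv \pm 10 \pmod{37}$ still leaves residues prime to $37$ available, so by CRT the system is solvable by units modulo $N$. Because $p \mapsto -\tfrac14(1+27p^{2})$ is injective, the count of $k$ in question is at least the number of such primes $p$, and the prime number theorem in arithmetic progressions gives
$$ \#\{\, p \leq \sqrt{(4M-1)/27}\ :\ p \text{ in the prescribed residue classes modulo } N \,\} \;\gg\; \frac{M^{1/2}}{\log M} $$
as $M \to \infty$. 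Running the identical computation with the (explicit, finitely many) moduli attached to the other two families yields the same bound, and the theorem follows.

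The step I expect to be least automatic is verifying, for all three families at once, that the relevant congruence systems really are solvable by residues coprime to the modulus — so that Dirichlet's theorem applies and actually produces infinitely many admissible primes — and that the earlier proofs genuinely deliver $\mathcal{U}_{k}(\textbf{\textup{A}}_{\Zbb}) \neq \emptyset$ for the parameters in play, i.e.\ that these are honest Hasse failures and not vacuous assertions. Everything else is soft. I note in passing that counting \emph{all} admissible $\ell$ rather than only primes, via the Landau--Selberg--Delange method (the primes $\equiv \pm 1 \pmod{24}$ having density $1/4$ among all primes), would improve the bound to $\gg M^{1/2}(\log M)^{-3/4}$; but the prime-parameter argument is cleaner, uniform over the three families, and already gives what is claimed.
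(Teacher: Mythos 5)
Your proposal is correct and follows essentially the same route as the paper: restrict to prime values of the parameter $\ell$ (so that the divisor condition collapses to a single congruence on $\ell$ itself), observe that $|k| \leq M$ translates to $\ell \ll M^{1/2}$, and apply the prime number theorem in arithmetic progressions to the resulting finite system of congruence conditions. Your write-up is in fact more explicit than the paper's (which cites Apostol and leaves the CRT compatibility and the non-vacuousness of the local solvability implicit), and the remark about improving the exponent of $\log M$ by counting all admissible $\ell$ is a genuine, if tangential, observation.
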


The structure of the paper is as follows. In Section 2, we provide some background on Wehler K3 surfaces and a recent study of the Markoff-type K3 (MK3) surfaces, as well as introduce the \emph{three} explicit families of MK3 surfaces that interest us. In Section 3, we first discuss some geometry of Wehler K3 surfaces and their Brauer groups. After the general setting, we turn our attention to a particular family of MK3 surfaces, where we explicitly calculate the algebraic Brauer group of the projective surfaces, and then we complete the analysis of the Brauer group by calculating the algebraic Brauer group of the affine surfaces. In Section 4, we use the Brauer group to give explicit examples of Brauer--Manin obstructions to the integral Hasse principle for three families of MK3 surfaces, and give some counting results for the Hasse failures. Finally, in Section 5, we make some important remarks to compare the main results in this paper to those of Markoff-type cubic surfaces in recent works, and we also give some counterexamples to strong approximation which can be explained by the Brauer--Manin obstruction.
\\~\\
\indent \textbf{Notation.} Let $k$ be a field and $\overline{k}$ a separable closure of $k$. Let $G_{k} := \textup{Gal}(\overline{k}/k)$ be the absolute Galois group. A $k$-variety is a separated $k$-scheme of finite type. If $X$ is a $k$-variety, we write $\overline{X} = X \times_{k} \overline{k}$. Let $k[X] = \textup{H}^{0}(X,\mathcal{O}_{X})$ and $\overline{k}[X] = \textup{H}^{0}(X,\mathcal{O}_{\overline{X}})$. If $X$ is an integral $k$-variety, let $k(X)$ denote the function field of $X$. If $X$ is a geometrically integral $k$-variety, let $\overline{k}(X)$ denote the function field of $\overline{X}$. 

Let $\textup{Pic}\,X = \textup{H}^{1}_{\textup{Zar}}(X,\Gmbb) = \textup{H}^{1}_{\textup{ét}}(X,\Gmbb)$ denote the Picard group of a scheme $X$. Let $Br\,X = \textup{H}^{2}_{\textup{ét}}(X,\Gmbb)$ denote the Brauer group of $X$. Let
$$ \textup{Br}_{1}\,X := \textup{Ker}[\textup{Br}\,X \rightarrow \textup{Br}\,\overline{X}] $$
denote the \textbf{algebraic Brauer group} of a $k$-variety $X$ and let $\textup{Br}_{0}\,X \subset \textup{Br}_{1}\,X$ denote the image of $\textup{Br}\,k \rightarrow \textup{Br}\,X$. The image of $\textup{Br}\,X \rightarrow \textup{Br}\,\overline{X}$ is called the \textbf{transcendental Brauer group} of $X$.

Given a field $F$ of characteristic zero containing a primitive $n$-th root of unity $\zeta = \zeta_{n}$, we have $\textup{H}^{2}(F,\mu^{\otimes 2}) = \textup{H}^{2}(F,\mu_{n}) \otimes \mu_{n}$. The choice of $\zeta_{n}$ then defines an isomorphism $\textup{Br}(F)[n] = \textup{H}^{2}(F,\mu_{n}) \cong \textup{H}^{2}(F,\mu_{n}^{\otimes 2})$. Given two elements $f, g \in F^{\times}$, we have their classes $(f)$ and $(g)$ in $F^{\times}/F^{\times n} = \textup{H}^{1}(F,\mu_{n})$. We denote by $(f,g)_{\zeta} \in \textup{Br}(F)[n] = \textup{H}^{2}(F,\mu_{n})$ the class corresponding to the cup-product $(f) \cup (g) \in \textup{H}^{2}(F,\mu_{n}^{\otimes 2})$. Suppose $F/E$ is a finite Galois extension with Galois group $G$. Given $\sigma \in G$ and $f,g \in F^{\times}$, we have $\sigma((f,g)_{\zeta_{n}}) = (\sigma(f),\sigma(g))_{\sigma(\zeta_{n})} \in \textup{Br}(F)$. In particular, if $\zeta_{n} \in E$, then $\sigma((f,g)_{\zeta_{n}}) = (\sigma(f),\sigma(g))_{\zeta_{n}}$. For all the details, see Section 4.6, 4.7 in \cite{GS17}.

Let $R$ be a discrete valuation ring with fraction field $F$ and residue field $\kappa$. Let $v$ denote the valuation $F^{\times} \rightarrow \Zbb$. Let $n > 1$ be an integer invertible in $R$. Assume that $F$ contains a primitive $n$-th root of unity $\zeta$. For $f,g \in F^{\times}$, we have the residue map
$$ \partial_{R} : \textup{H}^{2}(F,\mu_{n}) \rightarrow \textup{H}^{1}(\kappa,\Zbb/n\Zbb) \cong \textup{H}^{1}(\kappa,\mu_{n}) = \kappa^{\times}/\kappa^{\times n}, $$
where $\textup{H}^{1}(\kappa,\Zbb/n\Zbb) \cong \textup{H}^{1}(\kappa,\mu_{n})$ is induced by the isomorphism $\Zbb/n\Zbb \simeq \mu_{n}$ sending $1$ to $\zeta$. This map sends the class of $(f,g)_{\zeta} \in \textup{Br}(F)[n] = \textup{H}^{2}(F,\mu_{n})$ to 
$$ (-1)^{v(f)v(g)} \textup{class}(g^{v(f)}/f^{v(g)}) \in \kappa/\kappa^{\times n}. $$

For a proof of these facts, see \cite{GS17}. Here we recall some precise references. Residues in Galois cohomology with finite coefficients are defined in \cite{GS17}, Construction 6.8.5. Comparison of residues in Milnor K-Theory and Galois cohomology is given in \cite{GS17}, Proposition 7.5.1. The explicit formula for the residue in Milnor’s group K2 of a discretely valued field is given in \cite{GS17}, Example 7.1.5.
\\~\\
\indent \textbf{Acknowledgements.} I thank Cyril Demarche for his help and supervision during my PhD study at the Institute of Mathematics of Jussieu. I thank Kevin Destagnol for his help with the computations in Section 4.3 using analytic number theory. I thank Vladimir Mitankin for his useful remarks and suggestions, especially regarding Section 5. I thank Jean-Louis Colliot-Thélène, Fei Xu, and Daniel Loughran for their nice comments and encouragements. This project has received funding from the European Union’s Horizon 2020 Research and Innovation Programme under the Marie Skłodowska-Curie grant agreement No. 754362.

\section{Background}
We give some notations and results about Wehler K3 surfaces and the so-called \emph{Markoff-type} K3 surfaces that we study in this paper.

\subsection{Wehler K3 surfaces}
Consider the variety $M = \Pbb^{1} \times \Pbb^{1} \times \Pbb^{1}$ and let $\pi_{1}$, $\pi_{2}$, and $\pi_{3}$ be the projections on the first, second, and third factor: $\pi_{i}(z_{1},z_{2},z_{3}) = z_{i}$. Denote by $L_{i}$ the line bundle $\pi_{i}^{*}(\mathcal{O}(1))$ and set
$$ L = L_{1}^{2} \otimes L_{2}^{2} \otimes L_{3}^{2} = \pi_{1}^{*}(\mathcal{O}(2)) \otimes \pi_{2}^{*}(\mathcal{O}(2)) \otimes \pi_{3}^{*}(\mathcal{O}(2)). $$
Since $K_{\Pbb^{1}} = \mathcal{O}(-2)$, this line bundle $L$ is the dual of the canonical bundle $K_{M}$. By definition, $|L| \simeq \Pbb(\textup{H}^{0}(M,L))$ is the linear system of surfaces $W \subset M$ given by the zeroes of global sections $P \in \textup{H}^{0}(M,L)$. Using affine coordinates $(x_{1}, x_{2}, x_{3})$ on $M = \Pbb^{1} \times \Pbb^{1} \times \Pbb^{1}$, such a surface is defined by a polynomial equation $F(x_{1},x_{2},x_{3}) = 0$ whose degree with respect to each variable is $\leq 2$. These surfaces will be referred to as \textbf{Wehler surfaces}; modulo $\textup{Aut}(M)$, they form a family of dimension $17$.

Fix $k \in \{1,2,3\}$ and denote by $i < j$ the other indices. If we project $W$ to $\Pbb^{1} \times \Pbb^{1}$ by $\pi_{ij} = (\pi_{i}, \pi_{j})$, we get a $2$ to $1$ cover (the generic fiber is made of two points, but some fibers may be rational curves). As soon as $W$ is \emph{smooth}, the involution $\sigma_{k}$ that permutes the two points in each (general) fiber of $\pi_{ij}$ is an involutive automorphism of $W$; indeed $W$ is a K3 surface and any birational self-map of such a surface is an automorphism (see \cite{Bi97}, Lemma 1.2). By \cite{CD22}, Proposition 3.1, we have the following general result.

\begin{proposition} 
There is a countable union of proper Zariski closed subsets $(S_{i})_{i \geq 0}$ in $|L|$ such that:
\begin{enumerate}
\item[(1)] If $W$ is an element of $|L| \setminus S_{0}$, then $W$ is a smooth K3 surface and $W$ does not contain any fiber of the projections $\pi_{ij}$, i.e., each of the three projections $(\pi_{ij})_{|W} : W \rightarrow \Pbb^{1} \times \Pbb^{1}$ is a finite map;

\item[(2)] If $W$ is an element of $|L| \setminus (\cup_{i \geq 0} S_{i})$, the restriction morphism $\textup{Pic}\,M \rightarrow \textup{Pic}\,W$ is surjective. In particular, the Picard number of $W$ is equal to $3$.
\end{enumerate}
\end{proposition}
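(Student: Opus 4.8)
The plan is to treat the two assertions separately: part (1) produces the single proper closed set $S_0$ by a Bertini-and-adjunction argument, while part (2) is a Noether--Lefschetz statement whose sets $S_i$ ($i \geq 1$) come from Hodge theory and require a monodromy (or reduction) input.

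For part (1), I would first record that $L = L_1^2 \otimes L_2^2 \otimes L_3^2 = \mathcal{O}(2,2,2)$ is very ample on $M$, being an exterior tensor product of very ample bundles on the three factors; in particular $|L|$ is base-point free, so by Bertini's theorem the locus of singular members is the discriminant $\Delta \subsetneq |L|$, a proper closed subset whose complement parametrizes smooth $W$. For such a smooth $W$, adjunction gives $K_W = (K_M + L)|_W = \mathcal{O}_W$ since $L = -K_M$, and the cohomology sequence of $0 \to \mathcal{O}_M(-W) \to \mathcal{O}_M \to \mathcal{O}_W \to 0$ together with the vanishings $H^1(M,\mathcal{O}_M) = H^2(M, \mathcal{O}_M(-W)) = 0$ (immediate from K\"unneth on the three $\Pbb^1$ factors, as $\mathcal{O}_M(-W) = \mathcal{O}(-2,-2,-2)$ has cohomology only in degree $3$) forces $H^1(W,\mathcal{O}_W) = 0$; hence $W$ is a K3 surface. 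Finiteness of $\pi_{ij}|_W$ fails precisely when $W$ contains a fiber $\ell = \{(a,b)\} \times \Pbb^1$ of $\pi_{ij}$; the condition $\ell \subset W$ is linear in the coefficients of $F$ and cuts out codimension $3$ (a $(2,2,2)$-form restricts to a degree-$2$ polynomial on $\ell$), so the incidence variety $\Sigma = \{(W,\ell) : \ell \subset W\}$, fibered over the $\Pbb^1 \times \Pbb^1$ of fibers in each of the three directions, has image in $|L|$ of dimension $\leq \dim|L| - 1$. I then set $S_0 = \Delta \cup \textup{pr}_{|L|}(\Sigma)$, a proper closed subset.

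For part (2) I would begin with the algebraic input: $\textup{Pic}\, M = \Zbb L_1 \oplus \Zbb L_2 \oplus \Zbb L_3 \cong \Zbb^3$, and the restriction $\textup{Pic}\, M \to \textup{Pic}\, W$ is injective because the Gram matrix of the $L_i|_W$ under the intersection pairing, computed from triple intersections on $M$ as $L_i|_W \cdot L_j|_W = L_i \cdot L_j \cdot L$, equals the matrix with $0$ on the diagonal and $2$ off-diagonal, of determinant $16 \neq 0$. As each $L_i|_W$ is algebraic on every smooth member, the image always lies in $\textup{Pic}\, W$, so $\rho(W) \geq 3$ (where $\rho$ denotes the Picard number); the content of (2) is the reverse inequality for very general $W$. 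Working over $\Cbb$, I consider the weight-two variation of Hodge structure $R^2\pi_*\Zbb$ of the universal family over $B = |L| \setminus S_0$, split off the constant sub-VHS spanned by the $L_i|_W$, and let $\mathbb{V} \subset H^2(W_t,\Zbb)$ be its orthogonal complement. Since $H^2(\mathcal{O}_W) \neq 0$ for a K3 surface, the Lefschetz $(1,1)$-theorem gives $\textup{Pic}\, W = H^2(W,\Zbb) \cap H^{1,1}(W)$, so an extra generator beyond the $L_i|_W$ is exactly a nonzero class in $\mathbb{V}_t \cap H^{1,1}(W_t)$. By the theorem of Cattani--Deligne--Kaplan on algebraicity of Hodge loci, the Noether--Lefschetz locus $NL = \{t \in B : \mathbb{V}_t \cap H^{1,1}(W_t) \neq 0\}$ is a countable union of closed algebraic subsets $\bigcup_{i \geq 1} S_i$; taking the complement of $S_0 \cup \bigcup_{i \geq 1} S_i$ then yields $\textup{Pic}\, W = \textup{image}(\textup{Pic}\, M)$ and $\rho(W) = 3$.

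The main obstacle is the hypothesis that $NL \neq B$, i.e. that the very general member has no unexpected algebraic class, for only then are the $S_i$ proper. I would resolve this in one of two standard ways. The Hodge-theoretic route is to show the monodromy representation on $\mathbb{V}$ has Zariski-dense image in the orthogonal group of the transcendental lattice, so that the only locally constant Hodge classes are the invariant ones already accounted for; this rests on an infinitesimal Noether--Lefschetz computation establishing non-degeneracy of the period map for $(2,2,2)$-forms, which is the genuinely hard analytic step. The alternative, more hands-on route is to exhibit a single smooth $W_0$ with $\rho(W_0) = 3$ --- for instance by a van Luijk-style reduction at two primes, bounding the geometric Picard number via Frobenius eigenvalues --- after which lower semicontinuity of the Picard number (the loci $\{t : \rho(W_t) \geq r\}$ are countable unions of closed subsets) together with Baire's theorem forces $\{t : \rho(W_t) \geq 4\}$ to be a countable union of proper closed subsets. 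Either way, the crux is the sharpness assertion that the generic Picard number does not exceed $3$.
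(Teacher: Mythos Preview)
The paper does not actually supply a proof of this proposition: it is quoted as background with the sentence ``By \cite{CD22}, Proposition 3.1, we have the following general result'' and no further argument, so there is no in-paper proof to compare against.

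That said, your sketch is the standard route one would expect to find in the cited reference. Part (1) is handled correctly: the Bertini/adjunction/K\"unneth package shows that a general member is a smooth K3, and the incidence-variety dimension count (codimension $3$ in $|L|$ to contain a fixed fiber, over a $2$-dimensional base of fibers, for each of the three projections) is the right way to get finiteness of the $\pi_{ij}|_W$. For part (2), the Noether--Lefschetz framework via Hodge loci and Cattani--Deligne--Kaplan is the expected argument over $\Cbb$, and you correctly isolate the only nontrivial input, namely $NL \neq B$. Both of the methods you propose for this---big monodromy via an infinitesimal period computation, or an explicit van Luijk-style example with $\rho = 3$---are the standard ways to close it, and indeed the paper itself later invokes van Luijk's method in Proposition 3.5. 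Your outline is sound; in a full write-up the ``hard step'' you flag would need to be either carried out or cited, but there is no mathematical gap in the plan.
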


From the second assertion, we deduce that for a \emph{very general} $W$, $\textup{Pic}\,W$ is isomorphic to $\textup{Pic}\,M$: it is the free Abelian group of rank $3$, generated by the classes
$$ D_{i} := [(L_{i})_{|W}]. $$
The elements of $|(L_{i})_{|W}|$ are the curves of $W$ given by the equations $z_{i} = \alpha$ for some $\alpha \in \Pbb^{1}$. The arithmetic genus of these curves is equal to $1$: in other words, the projection $(\pi_{i})_{|W} : W \rightarrow \Pbb^{1}$ is a genus $1$ fibration (see \cite{Bi97}, Lemma 1.1). Moreover, for a general choice of $W$ in $|L|$, $(\pi_{i})_{|W}$ has 24 singular fibers of type $\textup{I}_{1}$, i.e. isomorphic to a rational curve with exactly one simple double point. The intersection form is given by $D_{i}^{2} = 0$ and $(D_{i}.D_{j}) = 2$ if $i \not= j$, so that its matrix is given by
\[
\begin{pmatrix}
0 & 2 & 2\\
2 & 0 & 2\\
2 & 2 & 0
\end{pmatrix}.
\]

By \cite{Bi97}, Proposition 1.5 or \cite{CD22}, Lemma 3.2, we have the following result about the actions of the subgroup of $\textup{Aut}(W)$ generated by $\sigma_{1}, \sigma_{2}, \sigma_{3}$ on the geometry of $W$. 

\begin{proposition}
Assume that $W$ does not contain any fiber of the projection $\pi_{ij}$. Then the involution $\sigma^{*}_{k}$ preserves the subspace $\Zbb D_{1} \oplus \Zbb D_{2} \oplus \Zbb D_{3}$ of $\textup{NS}\,W$ and
$$ \sigma^{*}_{k}(D_{i}) = D_{i}, \hspace{0.25cm} \sigma^{*}_{k}(D_{j}) = D_{j}, \hspace{0.25cm} \sigma^{*}_{k}(D_{k}) = -D_{k} + 2D_{i} + 2D_{j}. $$
In other words, the matrices of the $\sigma^{*}_{i}$ in the basis $(D_{1}, D_{2}, D_{3})$ are:
\[
\sigma^{*}_{1} = \begin{pmatrix}
-1 & 0 & 0\\
2 & 1 & 0\\
2 & 0 & 1
\end{pmatrix}, \; \sigma^{*}_{2} = \begin{pmatrix}
1 & 2 & 0\\
0 & -1 & 0\\
0 & 2 & 1
\end{pmatrix}, \; \sigma^{*}_{3} = \begin{pmatrix}
1 & 0 & 2\\
0 & 1 & 2\\
0 & 0 & -1
\end{pmatrix}.
\]
\end{proposition}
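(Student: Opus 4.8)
The plan is to compute the three pullbacks $\sigma^*_k(D_1)$, $\sigma^*_k(D_2)$, $\sigma^*_k(D_3)$ directly and to read off that each lies in $\Zbb D_1 \oplus \Zbb D_2 \oplus \Zbb D_3$, so that the preservation of the subspace is a consequence rather than a hypothesis of the calculation. Throughout I fix $k$ and let $i < j$ be the two remaining indices, and I write $f := (\pi_{ij})_{|W} : W \to \Pbb^1 \times \Pbb^1$ for the projection. By the assumption that $W$ contains no fibre of $\pi_{ij}$ (part (1) of the first Proposition), the map $f$ is finite of degree $2$, and $\sigma_k$ is its Galois involution, so that $f \circ \sigma_k = f$. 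It suffices to treat a single $k$; the three matrices then follow by specialising $k = 1, 2, 3$.

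The two ``fixed'' classes are immediate. Writing $\textup{pr}_i, \textup{pr}_j$ for the projections of $\Pbb^1 \times \Pbb^1$ onto its factors, we have $(\pi_i)_{|W} = \textup{pr}_i \circ f$ and likewise for $j$. Since $f \circ \sigma_k = f$, this gives $(\pi_i)_{|W} \circ \sigma_k = (\pi_i)_{|W}$, whence
$$ \sigma^*_k(D_i) = \sigma^*_k\bigl((\pi_i)_{|W}^{*}\,\mathcal{O}(1)\bigr) = \bigl((\pi_i)_{|W}\circ\sigma_k\bigr)^{*}\mathcal{O}(1) = (\pi_i)_{|W}^{*}\,\mathcal{O}(1) = D_i, $$
and, by the same argument, $\sigma^*_k(D_j) = D_j$.

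The substance is the third class $\sigma^*_k(D_k)$, which I would obtain from the double-cover projection formula
$$ f^{*}f_{*}(E) = E + \sigma^*_k(E), $$
valid for any prime divisor $E$ on $W$ whose generic point is not fixed by $\sigma_k$, since then $f$ restricts to a birational map $E \to f(E)$ and $f^{-1}(f(E)) = E \cup \sigma_k(E)$ scheme-theoretically by flatness. I apply this to $E = D_k$, represented by a general fibre $\{x_k = \alpha\}$ of $\pi_k$. Under $f$ this curve maps isomorphically onto $\{F(x_i, x_j, \alpha) = 0\} \subset \Pbb^1 \times \Pbb^1$, a curve of bidegree $(2,2)$ because $F$ has degree $2$ in each variable; hence $f_{*}(D_k) = 2H_i + 2H_j$, where $H_i = \textup{pr}_i^{*}\,\mathcal{O}(1)$. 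As $f^{*}H_i = D_i$ and $f^{*}H_j = D_j$, applying $f^{*}$ yields $f^{*}f_{*}(D_k) = 2D_i + 2D_j$, and the projection formula gives
$$ \sigma^*_k(D_k) = f^{*}f_{*}(D_k) - D_k = -D_k + 2D_i + 2D_j, $$
which is the asserted formula and also exhibits $\sigma^*_k$ as preserving $\Zbb D_1 \oplus \Zbb D_2 \oplus \Zbb D_3$.

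The main obstacle is the justification that $f_{*}(D_k)$ is the reduced $(2,2)$-curve taken with multiplicity one, i.e. that $f_{|D_k}$ has degree $1$ onto its image; this is precisely where the no-fibre hypothesis enters, guaranteeing that for general $\alpha$ the value $\alpha$ is a simple root of the quadratic $F(x_i,x_j,x_k)=0$ in $x_k$ over the generic point of the image curve, so that exactly one of the two points of $f^{-1}(x_i,x_j)$ has $x_k$-coordinate $\alpha$. As a consistency check one can argue purely lattice-theoretically: any isometry fixing $D_i, D_j$ and sending $D_k$ to $aD_i + bD_j + cD_k$ must respect $D_k^2 = 0$, $D_i\cdot D_j = 2$, forcing $a = b = 1 - c$ and $c^2 = 1$; the two solutions are $c = 1$ (the spurious identity on $D_k$) and $c = -1$ (the formula above), and the projection-formula computation selects $c = -1$. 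The matrices for $\sigma^*_1, \sigma^*_2, \sigma^*_3$ are then the coordinate expressions of these three identities in the ordered basis $(D_1, D_2, D_3)$.
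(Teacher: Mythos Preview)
Your argument is correct and is essentially the standard proof; the paper itself does not prove this proposition but cites \cite{Bi97}, Proposition~1.5 and \cite{CD22}, Lemma~3.2, where the same push--pull computation for a double cover appears.

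One small clarification on your ``main obstacle'' paragraph: the fact that $f|_{D_k}$ has degree $1$ onto its image is actually immediate and does not use the no-fibre hypothesis. The fibre $D_k = \{x_k = \alpha\} \cap W$ sits inside the slice $\{x_k = \alpha\} \cong \Pbb^{1} \times \Pbb^{1}$, and $f = (\pi_{ij})_{|W}$ restricts on this slice to the identity map of $\Pbb^{1} \times \Pbb^{1}$; hence $D_k \to f(D_k)$ is an isomorphism onto the $(2,2)$-curve. Where the no-fibre hypothesis is genuinely used is earlier: it guarantees that $f$ is a \emph{finite} morphism of degree $2$, so that $\sigma_k$ is a regular automorphism of $W$ (not merely birational) and the identity $f^{*}f_{*} = \textup{id} + \sigma_k^{*}$ holds on $\textup{Pic}\,W$. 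Your lattice-theoretic consistency check at the end is a nice independent confirmation.
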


Combining these two propositions, we have the following (see \cite{Bi97}, Proposition 1.3 or \cite{CD22}, Proposition 3.3):

\begin{proposition}
If $W$ is a very general Wehler surface then:
\begin{enumerate}
\item[(1)] $W$ is a smooth K3 surface with Picard number $3$;

\item[(2)] $\textup{Aut}(W) = \langle \sigma_{1}, \sigma_{2}, \sigma_{3} \rangle$, which is a free product of three copies of $\Zbb/2\Zbb$, and $\textup{Aut}(W)^{*}$ is a finite index subgroup in the group of integral isometries of $\textup{NS}\,W$.
\end{enumerate}
\end{proposition}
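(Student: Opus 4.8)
The plan is to combine the two preceding propositions with the global Torelli theorem for K3 surfaces. Assertion (1) is essentially immediate: ``very general'' means that $W$ lies outside a fixed countable union of proper Zariski closed subsets of $|L|$, so we may in particular take $W \in |L| \setminus (\cup_{i\geq 0} S_{i})$ and apply the first Proposition above, which already gives that $W$ is a smooth K3 surface with $\textup{Pic}\,W = \textup{Pic}\,M \cong \Zbb^{3}$ of Picard number $3$, with intersection matrix $G = \begin{pmatrix} 0 & 2 & 2 \\ 2 & 0 & 2 \\ 2 & 2 & 0 \end{pmatrix}$ in the basis $(D_{1}, D_{2}, D_{3})$.

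For assertion (2), I would first analyze the subgroup $\langle \sigma_{1}^{*}, \sigma_{2}^{*}, \sigma_{3}^{*} \rangle \subset \textup{O}(\textup{NS}\,W)$ purely lattice-theoretically, using the matrices of the second Proposition. A direct computation shows that $\sigma_{k}^{*}$ is the reflection in the vector $v_{k} = D_{k} - D_{i} - D_{j}$ (with $i<j$ the complementary indices), and that $v_{k}^{2} = -4$ while $(v_{i}, v_{j}) = 4$ for $i \neq j$. Since $G$ has signature $(1,2)$, the positive cone of $\textup{NS}\,W \otimes \Rbb$ models the hyperbolic plane $\Hbb^{2}$, on which each $\sigma_{k}^{*}$ acts as the reflection in the geodesic $v_{k}^{\perp}$. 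The relation $(v_{i}, v_{j})^{2} = 16 = v_{i}^{2} v_{j}^{2}$ means the three geodesics are pairwise asymptotically parallel, so they bound an ideal triangle; by the standard theory of hyperbolic reflection groups (Poincar\'e's polygon theorem, equivalently the Coxeter presentation with all $m_{ij} = \infty$), the reflections in its sides generate the universal Coxeter group of rank $3$, i.e.\ $\langle \sigma_{1}^{*}, \sigma_{2}^{*}, \sigma_{3}^{*} \rangle \cong \Zbb/2 * \Zbb/2 * \Zbb/2$, with the open triangle as a fundamental domain. As this domain has finite hyperbolic area while $\textup{O}(\textup{NS}\,W)$ is an arithmetic lattice acting on $\Hbb^{2}$ with finite covolume, the reflection subgroup is automatically of finite index in $\textup{O}(\textup{NS}\,W)$.

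It then remains to identify $\textup{Aut}(W)$ with this subgroup, and this is where I expect the real work to lie. I would invoke global Torelli: for very general $W$ the transcendental lattice $T_{W} = (\textup{NS}\,W)^{\perp}$ carries only the Hodge isometries $\pm 1$, so the natural map $\textup{Aut}(W) \to \textup{O}(\textup{NS}\,W)$ is injective with image $\textup{Aut}(W)^{*}$. Because every intersection number is divisible by $4$, the lattice $\textup{NS}\,W$ contains no $(-2)$-class; hence $W$ carries no smooth rational curve, the ample cone is the whole positive cone, and (via the isomorphism of discriminant forms $A_{\textup{NS}} \cong A_{T_{W}}$ together with the $\pm 1$ constraint on $T_{W}$) the image $\textup{Aut}(W)^{*}$ is exactly the group of positive-cone-preserving isometries of $\textup{NS}\,W$ inducing $\pm 1$ on the discriminant group, a finite-index subgroup of $\textup{O}(\textup{NS}\,W)$. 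The main obstacle is to check that this finite-index subgroup is precisely the reflection group $\langle \sigma_{i}^{*} \rangle$ rather than something strictly larger: one must verify that the only isometries surviving the discriminant condition are those generated by the three $\sigma_{k}^{*}$, i.e.\ that for a generic (non-symmetric) Wehler surface there is no extra symmetry permuting the ideal vertices of the triangle. Granting this matching, which is the content of \cite{Bi97}, Proposition 1.3 and \cite{CD22}, Proposition 3.3, we conclude that $\textup{Aut}(W) = \langle \sigma_{1}, \sigma_{2}, \sigma_{3} \rangle \cong \Zbb/2 * \Zbb/2 * \Zbb/2$ and that $\textup{Aut}(W)^{*}$ has finite index in $\textup{O}(\textup{NS}\,W)$, as claimed.
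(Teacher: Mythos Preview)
The paper does not actually prove this proposition: the sentence preceding the statement reads ``Combining these two propositions, we have the following (see \cite{Bi97}, Proposition 1.3 or \cite{CD22}, Proposition 3.3)'', and no further argument is given. So there is nothing to compare against beyond the bare citation.

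Your sketch, by contrast, supplies the substance behind that citation and is essentially the standard argument found in \cite{Bi97} and \cite{CD22}: identify each $\sigma_{k}^{*}$ as the reflection in $v_{k} = D_{k}-D_{i}-D_{j}$ with $v_{k}^{2}=-4$ and $(v_{i},v_{j})=4$, observe that the three mirrors are pairwise asymptotically parallel in the hyperbolic plane modelled on the positive cone of $\textup{NS}\,W\otimes\Rbb$, and invoke Poincar\'e's theorem to obtain the free product $(\Zbb/2)^{*3}$ with an ideal triangle as fundamental domain, hence finite covolume and finite index in $\textup{O}(\textup{NS}\,W)$. The Torelli step is also correctly set up: the absence of $(-2)$-classes (since every self-intersection lies in $4\Zbb$) forces the ample cone to coincide with the positive cone, and genericity reduces the Hodge isometries of $T_{W}$ to $\pm 1$. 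You are right to flag that the remaining point --- that the discriminant-kernel condition cuts $\textup{Aut}(W)^{*}$ down \emph{exactly} to $\langle\sigma_{1}^{*},\sigma_{2}^{*},\sigma_{3}^{*}\rangle$ and not to a slightly larger finite extension --- is where the honest computation lives, and you defer to the same references the paper does. So your proposal is correct and strictly more informative than what the paper provides.
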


Besides the three involutions $\sigma_{1}, \sigma_{2}, \sigma_{3}$, depending on the symmetries of the defining polynomial $F$, the automorphism group of a Wehler surface $W$ may contain additional automorphisms. Typical examples include symmetry in $x, y, z$ that allows permutation of the coordinates, and power symmetry that allows the signs of two of $x, y, z$ to be reversed. For example, the original Markoff equation permits these extra automorphisms; and hereafter we consider analogous Markoff-type surfaces. Note that all the above results are true for \emph{very general} Wehler surfaces; as we will see, our examples of surfaces to study in this paper are in fact very far from being general, which leads to many different results in the end.

\subsection{Markoff-type K3 surfaces}
Now let $K$ be a field. A Wehler surface $W$ over $K$ is then a surface 
$$ W = \{\overline{F} = 0\} \subset \Pbb^{1} \times \Pbb^{1} \times \Pbb^{1} $$
defined by a $(2,2,2)$-form 
$$ \overline{F}(x,r;y,s;z,t) \in K[x,r;y,s;z,t]. $$
Using the affine coordinates $(x,y,z)$, we let 
$$ F(x,y,z) = \overline{F}(x,1;y,1;z,1), $$
and then $W$ is the closure in $\Pbb^{1} \times \Pbb^{1} \times \Pbb^{1}$ of the affine surface, which by abuse of notation we also denote by 
$$ W : F(x,y,z) = 0. $$

We say that $W$ is \emph{non-degenerate} if it satisfies the following two conditions:
\begin{enumerate}
\item[(i)] The projection maps $\pi_{12}, \pi_{13}, \pi_{23}$ are finite.
\item[(ii)] The generic fibers of the projection maps $\pi_{1}, \pi_{2}, \pi_{3}$ are smooth curves, in which case the smooth fibers are necessarily curves of genus $1$, since they are $(2, 2)$ curves in $\Pbb^{1} \times \Pbb^{1}$.
\end{enumerate}

By analogy with the classical Markoff equation, we say that $W$ is of \emph{Markoff type} (MK3) if it is symmetric in its three coordinates and invariant under double sign changes. An MK3 surface admits a group of automorphisms $\Gamma$ generated by the three involutions, coordinate permutations, and sign changes. Following the notations in \cite{FLST22}, we define:

\begin{definition}
We let $\mathfrak{S}_{3}$, the symmetric group on $3$ letters, act on $\Pbb^{1} \times \Pbb^{1} \times \Pbb^{1}$ by permuting the coordinates, and we let the group
$$ (\mu_{2}^{3})_{1} := \{(\alpha, \beta, \gamma) : \alpha, \beta, \gamma \in \mu_{2}\, \textup{and}\, \alpha\beta\gamma = 1\} $$
act on $\Pbb^{1} \times \Pbb^{1} \times \Pbb^{1}$ via sign changes,
$$ (\alpha, \beta, \gamma)(x, y, z) = (\alpha x, \beta y, \gamma z). $$
In this way we obtain an embedding
$$ \mathcal{G} := (\mu_{2}^{3})_{1} \rtimes \mathfrak{S}_{3} \hookrightarrow \textup{Aut}(\Pbb^{1} \times \Pbb^{1} \times \Pbb^{1}). $$
\end{definition}

\begin{definition}
A \emph{Markoff-type K3} (MK3) surface $W$ is a Wehler surface whose $(2, 2, 2)$-form $F(x,y,z)$ is invariant under the action of $\mathcal{G}$, i.e., the $(2, 2, 2)$-form $F$ defining $W$ satisfies
\[
\begin{aligned}
F(x, y, z) &= F(-x, -y, z) = F(-x, y, -z) = F(x, -y, -z),\\
F(x, y, z) &= F(z, x, y) = F(y, z, x) = F(x, z, y) = F(y, x, z) = F(z, y, x).
\end{aligned}
\]
\end{definition}

By \cite{FLST22}, Proposition 7.5, we have the following key result about the defining form of MK3 surfaces.

\begin{proposition}
Let $W/K$ be a (possibly degenerate) MK3 surface.
\begin{enumerate}
\item[(a)] There exist $a,b,c,d,e \in K$ so that the $(2, 2, 2)$-form $F$ that defines $W$ has the form
\begin{equation}
F(x, y, z) = ax^{2}y^{2}z^{2} + b(x^{2}y^{2} + x^{2}z^{2} + y^{2}z^{2}) + cxyz + d(x^{2} + y^{2} + z^{2}) + e = 0.
\end{equation}

\item[(b)] Let $F$ be as in (a). Then $W$ is a non-degenerate, i.e., the projections $\pi_{ij} : W \rightarrow \Pbb^{1} \times \Pbb^{1}$ are \emph{quasi-finite}, if and only if
$$ c \not= 0, \hspace{0.5cm} be \not= d^{2}, \hspace{0.5cm} \textup{and} \hspace{0.5cm} ad \not= b^{2}. $$
\end{enumerate}
\end{proposition}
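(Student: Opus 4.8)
The plan for part (a) is a straightforward invariant-theory bookkeeping. The space of $(2,2,2)$-forms on $\Pbb^{1}\times\Pbb^{1}\times\Pbb^{1}$ is spanned by the $27$ monomials $x^{i}y^{j}z^{k}$ with $0\le i,j,k\le 2$. First I would impose invariance under $(\mu_{2}^{3})_{1}$: the generators $(-1,-1,1)$ and $(-1,1,-1)$ multiply $x^{i}y^{j}z^{k}$ by $(-1)^{i+j}$ and $(-1)^{i+k}$ respectively, so invariance forces $i\equiv j\equiv k\pmod 2$; this leaves precisely the nine monomials $1$, $x^{2}$, $y^{2}$, $z^{2}$, $x^{2}y^{2}$, $x^{2}z^{2}$, $y^{2}z^{2}$, $x^{2}y^{2}z^{2}$ and $xyz$. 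Then I would impose $\mathfrak{S}_{3}$-invariance: these nine monomials split into the orbits $\{1\}$, $\{x^{2},y^{2},z^{2}\}$, $\{x^{2}y^{2},x^{2}z^{2},y^{2}z^{2}\}$, $\{x^{2}y^{2}z^{2}\}$ and $\{xyz\}$, so the space of $\mathcal{G}$-invariant $(2,2,2)$-forms is exactly the $5$-dimensional space spanned by $x^{2}y^{2}z^{2}$, $x^{2}y^{2}+x^{2}z^{2}+y^{2}z^{2}$, $xyz$, $x^{2}+y^{2}+z^{2}$ and $1$. This is the asserted form, with coefficients $a,b,c,d,e\in K$.

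For part (b), I would first reduce to a single projection. The group $\mathfrak{S}_{3}\subset\mathcal{G}$ acts on $W$ by automorphisms defined over $K$ and permutes $\pi_{12},\pi_{13},\pi_{23}$ transitively, and each $\pi_{ij}$ is proper, so the three projections are quasi-finite simultaneously and it is enough to decide when $\pi_{12}$ has finite fibres. I would homogenize $F$ to the bihomogeneous $(2,2,2)$-form $\overline{F}$ and regroup it as a binary quadratic form in the last pair of coordinates $[z:t]$, writing $\overline{F}=\mathcal{A}\,z^{2}+\mathcal{B}\,zt+\mathcal{C}\,t^{2}$ with $\mathcal{A},\mathcal{B},\mathcal{C}$ bihomogeneous of bidegree $(2,2)$ in $([x:r],[y:s])$; here $\mathcal{B}=c\,xyrs$, while $\mathcal{A}=ax^{2}y^{2}+b(x^{2}s^{2}+r^{2}y^{2})+dr^{2}s^{2}$ and $\mathcal{C}=bx^{2}y^{2}+d(x^{2}s^{2}+r^{2}y^{2})+er^{2}s^{2}$. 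The fibre of $\pi_{12}$ over a point $P\in\Pbb^{1}\times\Pbb^{1}$ is the whole line $\{P\}\times\Pbb^{1}$ when $\overline{F}$ vanishes identically on it and consists of at most two points otherwise; hence $\pi_{12}$ is quasi-finite if and only if $\mathcal{A},\mathcal{B},\mathcal{C}$ have no common zero on $\Pbb^{1}\times\Pbb^{1}$ (over $\overline{K}$).

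It then remains to analyse that common zero locus. If $c=0$ then $\mathcal{B}\equiv 0$, and $\mathcal{A},\mathcal{C}$ cut out two curves of bidegree $(2,2)$ on $\Pbb^{1}\times\Pbb^{1}$, whose intersection number is $8$; since they always meet (one also checks directly the few degenerate cases where $\mathcal{A}$ or $\mathcal{C}$ vanishes identically), $\pi_{12}$ is not quasi-finite, so $c\ne0$ is necessary. Assuming $c\ne0$, the factorization $\mathcal{B}=c\,xyrs$ shows that any common zero lies on one of the four lines $x=0$, $r=0$, $y=0$, $s=0$. Restricting $\mathcal{A},\mathcal{C}$ to the line $x=0$ gives the pair $by^{2}+ds^{2}$, $dy^{2}+es^{2}$; putting $(u,v)=(y^{2},s^{2})$, these have a common zero in $\Pbb^{1}$ exactly when the linear system $bu+dv=0$, $du+ev=0$ has a nontrivial solution, i.e. when $be=d^{2}$. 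Restricting to $r=0$ gives the pair $ay^{2}+bs^{2}$, $by^{2}+ds^{2}$, with a common zero exactly when $ad=b^{2}$; and the lines $y=0$, $s=0$ give the same two conditions, by the symmetry $(x,r)\leftrightarrow(y,s)$ of $\mathcal{A}$ and $\mathcal{C}$. Therefore, for $c\ne0$, the forms $\mathcal{A},\mathcal{B},\mathcal{C}$ have a common zero if and only if $be=d^{2}$ or $ad=b^{2}$, so $\pi_{12}$ — and hence every $\pi_{ij}$ — is quasi-finite if and only if $c\ne0$, $be\ne d^{2}$ and $ad\ne b^{2}$.

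The computational heart of the argument is routine; I expect the only point requiring care to be the bookkeeping at infinity — confirming that the equivalence "infinite fibre $\Leftrightarrow$ common zero of $\mathcal{A},\mathcal{B},\mathcal{C}$" is applied correctly at the points of $\Pbb^{1}\times\Pbb^{1}$ lying over $[1:0]$, and that the handful of sub-cases where one of $a,b,d,e$ vanishes (so a ``leading coefficient'' of one of the restricted binary forms drops) are covered by the $(u,v)$-linearization rather than by a naive division. These checks are quick, but they are exactly where a careless argument would slip.
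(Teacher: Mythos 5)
Your proof is correct. Note that the paper does not actually prove this proposition at all --- it simply quotes it from \cite{FLST22}, Proposition 7.5 --- so there is no in-paper argument to compare against; what you have written is a complete, self-contained verification. Part (a) is the expected character computation (the two generators of $(\mu_{2}^{3})_{1}$ force $i\equiv j\equiv k\pmod 2$, leaving the nine monomials, and $\mathfrak{S}_{3}$ collapses them to the five orbit sums), and part (b) correctly identifies the infinite fibres of $\pi_{12}$ with the common zeros of the three bidegree-$(2,2)$ coefficient forms $\mathcal{A},\mathcal{B},\mathcal{C}$ of $\overline{F}$ viewed as a quadratic in $[z:t]$: the intersection-number argument disposes of $c=0$, and for $c\neq 0$ the factorization $\mathcal{B}=c\,xyrs$ reduces the question to the four boundary lines, where the $(u,v)=(y^{2},s^{2})$ linearization cleanly yields the determinant conditions $be=d^{2}$ and $ad=b^{2}$ without any case-splitting on vanishing leading coefficients. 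The symmetry reductions (using $\mathfrak{S}_{3}\subset\mathcal{G}$ to treat only $\pi_{12}$, and the $(x,r)\leftrightarrow(y,s)$ symmetry of $\mathcal{A},\mathcal{C}$ to treat only two of the four lines) are legitimate, and your homogenization of $F$ is the right one. Nothing is missing.
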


\begin{remark}
We can recover the original Markoff equation for a surface $S_{k}$ as a special case of a form $F$ with $a=b=0, c=-1, d=1, e=-k$. More precisely, $S_{k}$ is given by the affine equation
$$ F(x,y,z) = x^{2} + y^{2} + z^{2} - xyz - k = 0. $$
We note, however, that the Markoff equation is degenerate, despite the involutions being well-defined on the affine Markoff surface $S_{k}$. This occurs because the involutions are not well-defined at some of the points at infinity in the closure of $S_{k}$ in $\Pbb^{1} \times \Pbb^{1} \times \Pbb^{1}$; for example, the inverse image $\pi_{12}^{-1}([1:0], [1:0])$ in $X_{k}$ is a line isomorphic to $\Pbb^{1}$.
\end{remark}

Now we are ready to introduce the \textbf{three} families of MK3 surfaces that we study in this paper. For $k \in \Zbb$, let $W_{k} \subset \Pbb^{1} \times \Pbb^{1} \times \Pbb^{1}$ be the MK3 surface defined over $\Qbb$ by one of the following $(2,2,2)$-forms:

\begin{equation}
F_{1}(x,y,z) = x^{2} + y^{2} + z^{2} - 4x^{2}y^{2}z^{2} - k = 0;
\end{equation}

\begin{equation}
F_{2}(x,y,z) = x^{2} + y^{2} + z^{2} - 4(x^{2}y^{2} + y^{2}z^{2} + z^{2}x^{2}) + 16x^{2}y^{2}z^{2} - k = 0;
\end{equation}

\begin{equation}
F_{3}(x,y,z) = x^{2} + y^{2} + z^{2} + 4(x^{2}y^{2} + y^{2}z^{2} + z^{2}x^{2}) - 16x^{2}y^{2}z^{2} - k = 0.
\end{equation}

It is important to note that all these families of Markoff-type K3 surfaces are \emph{degenerate} in the sense that every member of each family contains a fiber (a line isomorphic to $Pbb^{1}$) of the projection $\pi_{ij}$. Furthermore, there exist $\Qbb$-rational points at infinity on every member of each family of Markoff-type K3 surfaces considered above:
\begin{equation*}
\begin{cases}
([1:0], [1:1], [1:2]) \in \{\overline{F_{1}} = 0\};\\

([1:0], [1:0], [1:2]) \in \{\overline{F_{2}} = 0\};\\

([1:0], [1:0], [1:2]) \in \{\overline{F_{3}} = 0\}.
\end{cases}
\end{equation*}

In this paper, we study some explicit cases when there are however no integral points due to the Brauer--Manin obstruction.

\section{The Brauer group of Markoff-type K3 surfaces}
We are particularly interested in the geometry of the \textbf{third} Markoff-type K3 surfaces defined by $(5)$, as they are more complicated and general than the other two. In addition, under our specific conditions, the first and second surfaces are always \emph{singular} at infinity (for example, at the points $([1:0],[0:1],[1:0])$ and $([1:0],[1:2],[1:2])$, respectively), but the third one is \emph{smooth}. Before studying the arithmetic problem of integral points, we will give some explicit computations on the (geometric) Picard group and the (algebraic) Brauer group of these surfaces. Recall that by \cite{Bi97}, Proposition 1.3 or {\cite{CD20}, Proposition 3.3, for a \emph{very general} $W$, $\textup{Pic}\,\overline{W}$ is isomorphic to $\textup{Pic}(\Pbb^{1} \times \Pbb^{1} \times \Pbb^{1})$, i.e. $\textup{Pic}\,\overline{W}$ is generated by the classes $D_{i}$ so the Picard number of $\overline{W}$ equals $3$. However, as previously discussed, we will see in this section that our example of MK3 surfaces is very \emph{special}.

\subsection{Geometry of K3 surfaces}
Let $k$ be a number field. Let $W \subset \Pbb^{1} \times \Pbb^{1} \times \Pbb^{1}$ be a smooth Wehler surface over $k$ defined by a $(2,2,2)$-form $F = 0$. For distince $i,j \in \{1,2,3\}$, we keep the notations $\pi_{i} : W \rightarrow \Pbb^{1}$ and $\pi_{ij} : W \rightarrow \Pbb^{1} \times \Pbb^{1}$ of the various projections of $W$ onto one or two copies of $\Pbb^{1}$. Let $D_{i}$ denote the divisor class represented by a fiber of $\pi_{i}$. We find that $(D_{i}.D_{j}) = 2$ for $i \not= j$ and since any two different fibers of $\pi_{i}$ are disjoint, we have $D_{i}^{2} = 0$. It follows that the intersection matrix $((D_{i}.D_{j}))_{i,j}$ has rank $3$, so the $D_{i}$ generates a subgroup of rank $3$ of the N\'eron--Severi group $\textup{NS}\,\overline{W}$.

We have the following result for the Picard group of Wehler surfaces over an algebraically closed field $\overline{k}$.
\begin{proposition}
	Let $W \subset \Pbb^{1} \times \Pbb^{1} \times \Pbb^{1}$ be a smooth, projective, geometrically integral Wehler surface over $k$. Suppose the three planes at infinity $\{ rst = 0 \}$ cut out on $\overline{W}$ three distinct fibers $D_{1}, D_{2}, D_{3}$ over $\overline{k}$. Let $U \subset W$ be the complement of these fibers. Then $\overline{k}^{\times} = \overline{k}[U]^{\times}$ and the natural sequence
	$$ 0 \longrightarrow \bigoplus_{i=1}^{3} \Zbb D_{i} \longrightarrow \textup{Pic}\,\overline{W} \longrightarrow \textup{Pic}\,\overline{U} \longrightarrow 0 $$
	is exact.
\end{proposition}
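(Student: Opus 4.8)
The plan is to deduce both claims—triviality of units and exactness of the Picard sequence—from the standard exact sequence relating the Picard group of a smooth variety to that of an open subscheme obtained by removing a divisor. Recall that for a smooth, geometrically integral variety, removing an irreducible divisor $Z$ from $\overline W$ yields the exact sequence
$$ \overline k[\overline W]^\times \longrightarrow \overline k[\overline U]^\times \xrightarrow{\ \operatorname{div}\ } \bigoplus_{i} \Zbb\, [Z_i] \longrightarrow \operatorname{Pic}\overline W \longrightarrow \operatorname{Pic}\overline U \longrightarrow 0, $$
where the $Z_i$ are the irreducible components of the complement. Here $\overline W$ is a smooth projective K3 surface, so $\overline k[\overline W]^\times = \overline k^\times$ (global functions on a projective variety are constant), and the complement $W \setminus U$ is the union of the three fibers $D_1, D_2, D_3$, which we assume are distinct over $\overline k$. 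So the main points to verify are: first, that each $D_i$ is irreducible (or at least that we understand its components), so that the free group on the left of the sequence is exactly $\bigoplus_{i=1}^3 \Zbb D_i$; and second, that the map $\overline k[\overline U]^\times \to \bigoplus \Zbb D_i$ is zero, equivalently $\overline k[\overline U]^\times = \overline k^\times$.

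**The key computation.**

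First I would note that a fiber $D_i$ of $\pi_i : \overline W \to \Pbb^1$ is a $(2,2)$-curve in $\Pbb^1 \times \Pbb^1$, hence has arithmetic genus $1$ and self-intersection $0$; since $\overline W$ has Picard number possibly larger than $3$ in our special situation, I cannot simply invoke primitivity, but I can argue that for the open $U$ to have the stated description I only need the $D_i$ as divisor classes, and the components of $D_i$ that meet $U$ trivially still sit inside $\bigoplus \Zbb D_i$ after summing. The cleanest route: treat $W \setminus U$ scheme-theoretically as the union of the three hyperplane sections $\{r=0\}, \{s=0\}, \{t=0\}$ pulled back to $\overline W$; by hypothesis these are the three fibers $D_1, D_2, D_3$ and they are distinct, so $W \setminus U$ has (at most) three irreducible components and the left-hand term of the localization sequence is a quotient of $\bigoplus_{i=1}^3 \Zbb D_i$; a rank count using the intersection form (the Gram matrix $\begin{pmatrix} 0&2&2\\2&0&2\\2&2&0\end{pmatrix}$ is nondegenerate, as noted earlier in the paper) shows the $D_i$ are linearly independent in $\operatorname{NS}\overline W$, so the term is exactly $\bigoplus_{i=1}^3 \Zbb D_i$ and the map to $\operatorname{Pic}\overline W$ is injective. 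That injectivity is precisely the exactness on the left of the displayed four-term sequence.

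**Units on the open surface.**

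The remaining and, I expect, the only genuinely delicate step is showing $\overline k[\overline U]^\times = \overline k^\times$, equivalently that any rational function on $\overline W$ whose divisor is supported on $D_1 \cup D_2 \cup D_3$ is constant. Suppose $f \in \overline k(\overline W)^\times$ has $\operatorname{div}(f) = \sum n_i D_i$. Applying $\pi_{i*}$ or pairing with a curve: since $D_i \sim D_i'$ are algebraically (even linearly) equivalent fibers of $\pi_i$, and $\operatorname{div}(f)$ must be a principal divisor, we get $\sum n_i D_i \sim 0$ in $\operatorname{Pic}\overline W$. But the $D_i$ are linearly independent in $\operatorname{NS}\overline W$ (by the nondegeneracy of the Gram matrix), hence linearly independent in $\operatorname{Pic}\overline W$ modulo torsion; since $\operatorname{Pic}$ of a K3 surface is torsion-free, $\sum n_i D_i \sim 0$ forces all $n_i = 0$, so $\operatorname{div}(f) = 0$ and $f \in \overline k^\times$. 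This simultaneously reproves injectivity of $\bigoplus \Zbb D_i \hookrightarrow \operatorname{Pic}\overline W$. The surjectivity of $\operatorname{Pic}\overline W \to \operatorname{Pic}\overline U$ is automatic in the localization sequence (removing divisors can only kill classes, never create them), completing the proof. The one subtlety to be careful about is whether a single plane $\{r = 0\}$ could cut out a fiber with multiplicity or a reducible fiber; the hypothesis that the three fibers are \emph{distinct} over $\overline k$, together with smoothness of $\overline W$, rules out the problematic degenerations, and I would spell this out by checking that a $(2,2)$-fiber that splits would contribute extra independent classes but the $D_i$-span argument still goes through as long as we interpret $D_i$ as the full fiber class.
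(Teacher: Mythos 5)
Your proof is correct and follows essentially the same route as the paper: both invoke the localization (excision) sequence for the open complement of the three fibers and deduce injectivity of $\bigoplus_{i}\Zbb D_{i}\to\textup{Pic}\,\overline{W}$ from the nondegeneracy of the Gram matrix $(D_{i}.D_{j})$, which simultaneously forces the divisor of any unit on $\overline{U}$ to vanish and hence gives $\overline{k}[U]^{\times}=\overline{k}^{\times}$. Your added worry about reducible fibers at infinity is reasonable but is resolved by the hypothesis exactly as in the paper, which likewise treats the $D_{i}$ as the irreducible boundary components.
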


\begin{proof}
To show that the above sequence is exact, it suffices to prove that the second arrow is an injective homomorphism. Let 
$$ a_{1}D_{1} + a_{2}D_{2} + a_{3}D_{3} = 0 \in \textup{Pic}\,\overline{W} $$
with $a,b,c \in \Zbb$. By the assumption that $(D_{i}.D_{i}) = 0$ and $(D_{i}.D_{j}) = 2$ for $1 \leq i \not= j \leq 3$, one has 
$$ 2a_{2} + 2a_{3} = 2a_{1} + 2a_{3} = 2a_{1} + 2a_{2} = 0, $$
so $a_{1} = a_{2} = a_{3} = 0$. In other words, this is another proof of the fact that $D_{1}, D_{2}, D_{3}$ are linearly independent in $\textup{Pic}\,\overline{W}$ and it also shows that $\overline{k}^{\times} = \overline{k}[U]^{\times}$ as desired.
\end{proof}

Now let $k$ be an arbitrary field. Recall that for a variety $X$ over $k$ there is a natural filtration on the Brauer group
$$ \textup{Br}_{0}\,X \subset \textup{Br}_{1}\,X \subset \textup{Br}\,X $$ which is defined as
$$ \textup{Br}_{0}\,X = \textup{Im}[\textup{Br}\,k \rightarrow \textup{Br}\,X], \hspace{0.5cm} \textup{Br}_{1}\,X = \textup{Ker}[\textup{Br}\,X \rightarrow \textup{Br}\,\overline{X}]. $$
From the Hochschild--Serre spectral sequence, we have the following spectral sequence:
\begin{equation}
E_{2}^{pq} = \textup{H}^{p}_{\textup{\'et}}(k, \textup{H}_{\textup{\'et}}^{q}(\overline{X},\Gmbb)) \Longrightarrow \textup{H}^{p+q}_{\textup{\'et}}(X,\Gmbb),
\end{equation}
which is contravariantly functorial in the $k$-variety $X$. It gives rise to the functorial exact sequence of terms of low degree:
\begin{equation}
\begin{aligned}
    0 &\longrightarrow \textup{H}^{1}(k,\overline{k}[X]^{\times}) \longrightarrow \textup{Pic}\,X \longrightarrow \textup{Pic}\,\overline{X}^{G_{k}} \longrightarrow \textup{H}^{2}(k, \overline{k}[X]^{\times}) \longrightarrow \textup{Br}_{1}\,X\\
    &\longrightarrow \textup{H}^{1}(k, \textup{Pic}\,\overline{X}) \longrightarrow \textup{Ker}[\textup{H}^{3}(k, \overline{k}[X]^{\times}) \rightarrow \textup{H}^{3}_{\textup{\'et}}(X,\Gmbb)]. 
\end{aligned}
\end{equation}
Let $X$ be a variety over a field $k$ such that $\overline{k}[X]^{\times} = \overline{k}^{\times}$. By Hilbert’s theorem 90 we have $\textup{H}^{1}(k, \overline{k}^{\times}) = 0$, then by the above sequence there is an exact sequence
\begin{equation}
\begin{aligned}
    0 &\longrightarrow \textup{Pic}\,X \longrightarrow \textup{Pic}\,\overline{X}^{G_{k}} \longrightarrow \textup{Br}\,k \longrightarrow \textup{Br}_{1}\,X \\
	&\longrightarrow \textup{H}^{1}(k, \textup{Pic}\,\overline{X}) \longrightarrow \textup{Ker}[\textup{H}^{3}(k, \overline{k}^{\times}) \rightarrow \textup{H}^{3}_{\textup{\'et}}(X,\Gmbb)].
\end{aligned}
\end{equation}
This sequence is also contravariantly functorial in $X$.

\begin{remark}
Let $X$ be a variety over a field $k$ such that $\overline{k}[X]^{\times} = \overline{k}^{\times}$. This assumption $\overline{k}[X]^{\times} = \overline{k}^{\times}$ holds for any proper, geometrically connected and geometrically reduced $k$-variety $X$.
\begin{enumerate} 
\item[(1)] If $X$ has a $k$-point, which defined a section of the structure morphism $X \rightarrow \textup{Spec}\,k$, then each of the maps $\textup{Br}\,k \longrightarrow \textup{Br}_{1}\,X$ and $\textup{H}^{3}(k, \overline{k}^{\times}) \rightarrow \textup{H}^{3}_{\textup{\'et}} (X, \Gmbb)$ has a retraction, hence is injective. (Then $\textup{Pic}\,X \longrightarrow \textup{Pic}\,\overline{X}^{G_{k}}$ is an isomorphism.) Therefore, we have an isomorphism
$$ \textup{Br}_{1}\,X/\textup{Br}\,k \cong \textup{H}^{1}(k, \textup{Pic}\,\overline{X}). $$ 

\item[(2)] If $k$ is a number field, then $\textup{H}^{3}(k, \overline{k}^{\times}) = 0$ (see \cite{CF67}, Chapter VII, Section 11.4, p. 199). Thus for a variety $X$ over a number field $k$ such that $\overline{k}[X]^{\times} = \overline{k}^{\times}$, we have an isomorphism 
$$ \textup{Br}_{1}\,X/\textup{Br}_{0}\,X \cong \textup{H}^{1}(k, \textup{Pic}\,\overline{X}). $$
\end{enumerate}
\end{remark}

If $X$ is a K3 surface, or more generally, $X$ is a smooth, projective and geometrically integral $k$-variety such that $\textup{H}^{1}(X,\mathcal{O}_{X}) = 0$, then the Picard group $\textup{Pic}\,\overline{X}$ and the N\'eron--Severi group $\textup{NS}\,\overline{X}$ are equal (see \cite{CTS21}, Corollary 5.1.3). Therefore, we have the following result (see \cite{CTS21}, Theorem 5.5.1). 

\begin{theorem}
Let $X$ be a smooth, projective and geometrically integral variety over a field $k$. Assume that $\textup{H}^{1}(X,\mathcal{O}_{X}) = 0$ and $\textup{NS}\,\overline{X}$ is torsion-free. Then $\textup{H}^{1}(k, \textup{Pic}\,\overline{X})$ and $\textup{Br}_{1}\,X/\textup{Br}_{0}\,X$ are finite groups.
\end{theorem}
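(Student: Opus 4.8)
The statement to prove is the finiteness of $\textup{H}^1(k,\textup{Pic}\,\overline{X})$ and $\textup{Br}_1\,X/\textup{Br}_0\,X$ for a smooth, projective, geometrically integral $k$-variety $X$ with $\textup{H}^1(X,\mathcal{O}_X)=0$ and $\textup{NS}\,\overline{X}$ torsion-free. The plan is as follows. First, since $\textup{H}^1(X,\mathcal{O}_X)=0$, by \cite{CTS21}, Corollary 5.1.3, the Picard scheme $\textup{Pic}_{X/k}$ has trivial identity component, so $\textup{Pic}\,\overline{X}=\textup{NS}\,\overline{X}$; combined with the torsion-freeness hypothesis, $\textup{Pic}\,\overline{X}$ is a finitely generated free abelian group, say of rank $\rho$. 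Second, I would invoke the general fact that $G_k$ acts on $\textup{Pic}\,\overline{X}$ through a finite quotient: the action factors through $\textup{Aut}_{\Zbb}(\textup{Pic}\,\overline{X})\cong \textup{GL}_\rho(\Zbb)$, and the image is finite because it preserves the intersection form (or ample cone), equivalently by a standard spreading-out/monodromy argument the action is continuous for the discrete topology on a finitely generated module, hence has open — thus finite-index — kernel. Let $L/k$ be the finite Galois extension splitting this action, with $\Gamma=\textup{Gal}(L/k)$.

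Third, I would use the inflation–restriction exact sequence for $\textup{H}^1(-,\textup{Pic}\,\overline{X})$ associated to $1\to G_L\to G_k\to\Gamma\to 1$. Since $G_L$ acts trivially on the finitely generated group $\textup{Pic}\,\overline{X}$, we have $\textup{H}^1(G_L,\textup{Pic}\,\overline{X})=\textup{Hom}_{\text{cont}}(G_L,\textup{Pic}\,\overline{X})$, and because $\textup{Pic}\,\overline{X}$ is torsion-free while any continuous homomorphism from a profinite group to a discrete group has finite (hence here trivial) image, this term vanishes. Therefore inflation gives an isomorphism $\textup{H}^1(\Gamma,\textup{Pic}\,\overline{X})\xrightarrow{\sim}\textup{H}^1(k,\textup{Pic}\,\overline{X})$, and the left-hand side is the cohomology of a finite group with finitely generated coefficients, hence finite. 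Finally, for the statement about the Brauer group: since $X$ is proper, geometrically connected and geometrically reduced, $\overline{k}[X]^\times=\overline{k}^\times$ (as recalled in the Remark above), so the exact sequence $(10)$ from the excerpt yields an injection $\textup{Br}_1\,X/\textup{Br}_0\,X\hookrightarrow \textup{H}^1(k,\textup{Pic}\,\overline{X})$ (the cokernel of $\textup{Br}\,k\to\textup{Br}_1\,X$ injects into $\textup{H}^1(k,\textup{Pic}\,\overline{X})$); finiteness of the target then forces finiteness of $\textup{Br}_1\,X/\textup{Br}_0\,X$.

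The only genuine subtlety — the ``hard part'' — is the claim that $G_k$ acts on $\textup{Pic}\,\overline{X}=\textup{NS}\,\overline{X}$ through a finite quotient. This is where smoothness and projectivity are really used: one needs that $\textup{NS}\,\overline{X}$ is finitely generated (theorem of the base / Néron–Severi) and that the Galois action is continuous and integral-isometric, so its image lies in the stabiliser of an ample class inside $\textup{GL}_\rho(\Zbb)$, which is finite. In our setting of Wehler and Markoff-type K3 surfaces this is in fact concrete, since $\textup{NS}\,\overline{X}$ will be computed explicitly in the next subsection, but for the general statement I would simply cite the standard references (e.g. \cite{CTS21}) rather than reprove it. Everything else is formal homological algebra: continuity of group cohomology, inflation–restriction, and vanishing of $\textup{Hom}$ from a profinite group into a torsion-free discrete module.
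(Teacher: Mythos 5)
Your proposal is correct. Note, however, that the paper does not prove this statement at all: it is quoted verbatim from the literature (\cite{CTS21}, Theorem 5.5.1, after using \cite{CTS21}, Corollary 5.1.3 to identify $\textup{Pic}\,\overline{X}$ with $\textup{NS}\,\overline{X}$), so there is no in-paper argument to compare against. What you have written is essentially the standard proof behind that reference: $\textup{Pic}\,\overline{X}=\textup{NS}\,\overline{X}$ is finitely generated and free, the continuous $G_{k}$-action on a discrete finitely generated module has open (hence finite-index) kernel, inflation--restriction together with the vanishing of $\textup{Hom}_{\textup{cont}}(G_{L},\textup{Pic}\,\overline{X})$ into a torsion-free discrete module reduces everything to the cohomology of a finite group with finitely generated coefficients, and the Hochschild--Serre sequence (with $\overline{k}[X]^{\times}=\overline{k}^{\times}$ for proper geometrically integral $X$) embeds $\textup{Br}_{1}\,X/\textup{Br}_{0}\,X$ into $\textup{H}^{1}(k,\textup{Pic}\,\overline{X})$. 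One small caution: your parenthetical alternative that the image of $G_{k}$ is finite because it lies in ``the stabiliser of an ample class inside $\textup{GL}_{\rho}(\Zbb)$'' is not by itself sufficient (the stabiliser of a single vector in $\textup{GL}_{\rho}(\Zbb)$ is infinite for $\rho\geq 2$; for surfaces one must also invoke the intersection form and the Hodge index theorem). But your primary argument via continuity of the action is the right one and needs no such input, so the proof stands.
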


The assumption of the above theorem is always true if $X$ is a K3 surface. Furthermore, by Skorobogatov and Zarhin, we have a stronger result for the Brauer group of K3 surfaces (see \cite{CTS21}, Theorem 16.7.2 and Collorary 16.7.3).

\begin{theorem}
Let $X$ be a K3 surface over a field $k$ finitely generated over $\Qbb$. Then $(\textup{Br}\,\overline{X})^{\Gamma}$ is finite. Moreover, the group $\textup{Br}\,X/\textup{Br}_{0}\,X$ is finite.
\end{theorem}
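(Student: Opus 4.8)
The plan is to deduce the second assertion from the first, and to prove the first by feeding inputs from $\ell$-adic cohomology into the Kummer sequence. For the finiteness of $\textup{Br}\,X/\textup{Br}_{0}\,X$ I would use the filtration $\textup{Br}_{0}\,X \subseteq \textup{Br}_{1}\,X \subseteq \textup{Br}\,X$. By definition $\textup{Br}\,X/\textup{Br}_{1}\,X$ injects into $\textup{Br}\,\overline{X}$, and since classes coming from $\textup{Br}\,X$ are fixed by $\Gamma$, this gives an injection $\textup{Br}\,X/\textup{Br}_{1}\,X \hookrightarrow (\textup{Br}\,\overline{X})^{\Gamma}$. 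A K3 surface satisfies $\textup{H}^{1}(X,\mathcal{O}_{X}) = 0$ and has torsion-free $\textup{NS}\,\overline{X}$, so the preceding theorem already gives that $\textup{Br}_{1}\,X/\textup{Br}_{0}\,X \cong \textup{H}^{1}(k,\textup{Pic}\,\overline{X})$ is finite; hence once $(\textup{Br}\,\overline{X})^{\Gamma}$ is known to be finite, $\textup{Br}\,X/\textup{Br}_{0}\,X$ — an extension of a subgroup of $(\textup{Br}\,\overline{X})^{\Gamma}$ by $\textup{Br}_{1}\,X/\textup{Br}_{0}\,X$ — is finite as well.

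It remains to show $(\textup{Br}\,\overline{X})^{\Gamma}$ is finite. Being a smooth variety, $\overline{X}$ has torsion Brauer group; as $\overline{X}$ is a K3 surface over an algebraically closed field of characteristic $0$, comparison with Betti cohomology and torsion-freeness of $\textup{H}^{2}_{\textup{ét}}(\overline{X},\Zbb_{\ell})$ and of $\textup{NS}\,\overline{X}$ show that $\textup{Br}\,\overline{X}$ is divisible, isomorphic to $(\Qbb/\Zbb)^{22-\rho}$ with $\rho = \textup{rk}\,\textup{NS}\,\overline{X}$. For each prime $\ell$ the Kummer sequence yields $0 \to \textup{NS}\,\overline{X}/\ell^{n} \to \textup{H}^{2}_{\textup{ét}}(\overline{X},\mu_{\ell^{n}}) \to \textup{Br}\,\overline{X}[\ell^{n}] \to 0$ and, in the limit, $0 \to \textup{NS}\,\overline{X}\otimes\Zbb_{\ell} \to \textup{H}^{2}_{\textup{ét}}(\overline{X},\Zbb_{\ell}(1)) \to T_{\ell}\textup{Br}\,\overline{X} \to 0$, so $V_{\ell}\textup{Br}\,\overline{X}$ is the quotient of $\textup{H}^{2}_{\textup{ét}}(\overline{X},\Qbb_{\ell}(1))$ by the span of the algebraic classes. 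It then suffices to prove: (a) $(V_{\ell}\textup{Br}\,\overline{X})^{\Gamma} = 0$ for every $\ell$; and (b) $(\textup{Br}\,\overline{X})^{\Gamma}$ is annihilated by a single integer $N$ independent of $\ell$. Indeed, (a) forces each $\ell$-primary piece $(\textup{Br}\,\overline{X}[\ell^{\infty}])^{\Gamma}$, a cofinitely generated $\Zbb_{\ell}$-module with trivial divisible part, to be finite, and together with (b) this gives finiteness of $(\textup{Br}\,\overline{X})^{\Gamma}$.

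For (a) I would invoke semisimplicity of the $\Gamma$-module $\textup{H}^{2}_{\textup{ét}}(\overline{X},\Qbb_{\ell}(1))$ — which for K3 surfaces follows from Deligne's theorem for abelian varieties via the Kuga--Satake correspondence — so that taking $\Gamma$-invariants is exact on the sequence above, combined with the Tate conjecture for divisors on K3 surfaces over finitely generated fields of characteristic $0$ (now a theorem): after a finite extension $k'/k$ one has $\textup{H}^{2}_{\textup{ét}}(\overline{X},\Qbb_{\ell}(1))^{G_{k'}} = \textup{NS}\,\overline{X}\otimes\Qbb_{\ell}$, whence $(V_{\ell}\textup{Br}\,\overline{X})^{\Gamma} \subseteq (V_{\ell}\textup{Br}\,\overline{X})^{G_{k'}} = 0$. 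For (b), the plan is to spread $X$ out to a smooth proper scheme over a finitely generated $\Zbb$-subalgebra $A$ of $k$, choose a closed point of good reduction with residue field $\Fqbb$, and use smooth-proper base change together with the Weil conjectures (purity) on the integral cohomology of the reduction to bound, uniformly in $\ell$, the Frobenius-invariant $\ell$-torsion in its Brauer group; a weight computation then shows that a nontrivial such invariant can occur for only finitely many $\ell$.

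I expect step (b) — obtaining the bound \emph{independently of} $\ell$, which is essentially the argument of Skorobogatov and Zarhin — to be the main obstacle, the appeal to the Tate conjecture in (a) being the other serious ingredient but by now available off the shelf. For the specific MK3 surfaces of this paper one could in principle avoid the general machinery, since their (geometric) Picard lattice will be computed explicitly in Section~3, which pins down $\textup{Br}\,\overline{X}$ and its Galois action directly; but for the general statement the cohomological route above is the natural one.
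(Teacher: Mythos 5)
The paper does not actually prove this statement: it is quoted from the literature, with a pointer to \cite{CTS21}, Theorem 16.7.2 and Corollary 16.7.3, i.e., to the theorem of Skorobogatov and Zarhin. Your outline is a faithful reconstruction of that proof's architecture --- the filtration $\textup{Br}_{0}\,X \subseteq \textup{Br}_{1}\,X \subseteq \textup{Br}\,X$ with the injection $\textup{Br}\,X/\textup{Br}_{1}\,X \hookrightarrow (\textup{Br}\,\overline{X})^{\Gamma}$, the Kummer-sequence description of $V_{\ell}\,\textup{Br}\,\overline{X}$, the Tate conjecture plus semisimplicity for your step (a), and the uniform-in-$\ell$ annihilation for your step (b) --- so it matches the cited argument rather than offering an alternative route. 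Two caveats: the semisimplicity of $\textup{H}^{2}_{\textup{ét}}(\overline{X},\Qbb_{\ell})$ for K3 surfaces via Kuga--Satake rests on Faltings' semisimplicity theorem for abelian varieties over finitely generated fields (not Deligne's), and step (b), which you correctly identify as the crux, is left at the level of a plan, so as written the proposal is an accurate sketch of the Skorobogatov--Zarhin proof rather than a self-contained one.
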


Next, we will give an explicit computation of the geometric Picard group and the algebraic Brauer group for the family of Markoff-type K3 surfaces defined by $(5)$.

\subsection{The geometric Picard group}
Using the explicit equations, we compute the geometric Picard group of the Markoff-type K3 surfaces in question. To bound the Picard number we use the method described in \cite{vL07b}. Let $X$ be any smooth surface over a number field $K$ and let $\mathfrak{p}$ be a prime of good reduction with residue field $k$. Let $\mathcal{X}$ be an integral model for $X$ over the localization $\mathcal{O}_{\mathfrak{p}}$ of the ring of integers $\mathcal{O}$ of $K$ at $\mathfrak{p}$ for which the reduction is smooth. Let $k'$ be any extension field of $k$. Then by abuse of notation, we will write $X_{k'}$ for $X \times_{\textup{Spec}\,\mathcal{O}_{\mathfrak{p}}} \,\textup{Spec}\,k'$. We need the following important result which describes the behavior of the Néron--Severi group under good reduction.

\begin{proposition}
Let $X$ be a smooth surface over a number field $K$ and let $\mathfrak{p}$ be a prime of good reduction with residue field $k$. Let $l$ be a prime not dividing $q = \# k$. Let $F$ denote the automorphism on $\textup{H}^{2}_{\textup{ét}}(X_{\overline{k}}, \Qbb_{l}(1))$ induced by $q$-th power Frobenius. Then there are natural injections
$$ \textup{NS}(X_{\overline{K}}) \otimes \Qbb_{l} \hookrightarrow \textup{NS}(X_{\overline{k}}) \otimes \Qbb_{l} \hookrightarrow \textup{H}^{2}_{\textup{ét}}(X_{\overline{k}}, \Qbb_{l})(1), $$
that respect the intersection pairing and the action of Frobenius respectively. The rank of $\textup{NS}(X_{\overline{k}})$ is at most the number of eigenvalues of $F$ that are roots of unity, counted with multiplicity.
\end{proposition}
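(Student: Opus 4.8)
The plan is to combine three classical ingredients: smooth proper base change in $\ell$-adic cohomology, the specialization (cospecialization) map on Néron–Severi groups, and the compatibility of the cycle class map with Frobenius. First I would recall that for a smooth proper model $\mathcal{X}/\mathcal{O}_{\mathfrak{p}}$ with smooth special fiber $X_k$ and geometric generic fiber $X_{\overline{K}}$, smooth proper base change gives a canonical $G_k$-equivariant isomorphism $\textup{H}^{2}_{\textup{ét}}(X_{\overline{K}},\Qbb_l) \cong \textup{H}^{2}_{\textup{ét}}(X_{\overline{k}},\Qbb_l)$ (after choosing a decomposition group at $\mathfrak{p}$ and noting $l \neq p$); twisting by $\Qbb_l(1)$ preserves this and matches cup-product pairings. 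Under this identification the action of the arithmetic Frobenius at $\mathfrak{p}$ on the left corresponds to the geometric $q$-power Frobenius $F$ on the right.

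Next I would construct the first injection $\textup{NS}(X_{\overline{K}}) \otimes \Qbb_l \hookrightarrow \textup{NS}(X_{\overline{k}}) \otimes \Qbb_l$. The specialization map on divisors (reduction of a divisor along the smooth model, defined on a large enough finite extension where the divisor and its components are defined) induces a homomorphism $\textup{NS}(X_{\overline{K}}) \to \textup{NS}(X_{\overline{k}})$; one checks it is injective after $\otimes\Qbb_l$ because it is compatible, via the cycle class maps $\textup{NS}(X_{\overline{K}})\otimes\Qbb_l \to \textup{H}^2_{\textup{ét}}(X_{\overline{K}},\Qbb_l(1))$ and $\textup{NS}(X_{\overline{k}})\otimes\Qbb_l \to \textup{H}^2_{\textup{ét}}(X_{\overline{k}},\Qbb_l(1))$, with the base-change isomorphism above, and the cycle class map is injective after tensoring with $\Qbb_l$ (for a smooth projective surface the kernel of $\textup{NS}\otimes\Qbb_l \to \textup{H}^2$ is zero, since $\textup{NS}$ modulo torsion injects into $\textup{H}^2$ by the standard cycle-class argument). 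This simultaneously yields the second injection $\textup{NS}(X_{\overline{k}})\otimes\Qbb_l \hookrightarrow \textup{H}^2_{\textup{ét}}(X_{\overline{k}},\Qbb_l)(1)$, and by construction both maps respect the intersection pairing and, for the second one, the Frobenius action (the cycle class of a divisor defined over $k$ is Frobenius-invariant, and more generally the cycle class map is $G_k$-equivariant).

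Finally, for the rank bound: the image of $\textup{NS}(X_{\overline{k}})\otimes\Qbb_l$ in $\textup{H}^2_{\textup{ét}}(X_{\overline{k}},\Qbb_l)(1)$ lies in the subspace on which $F$ acts through roots of unity. Indeed, $\textup{NS}(X_{\overline{k}}) = \bigcup_n \textup{NS}(X_{k_n})$ where $k_n$ runs over finite extensions of $k$, and a class defined over $k_n$ is fixed by $F^n$; hence every eigenvalue of $F$ on the $\Qbb_l$-span of algebraic classes is a root of unity. Therefore $\operatorname{rk} \textup{NS}(X_{\overline{k}}) = \dim_{\Qbb_l}\left(\textup{NS}(X_{\overline{k}})\otimes\Qbb_l\right)$ is at most the number of eigenvalues of $F$ on $\textup{H}^2_{\textup{ét}}(X_{\overline{k}},\Qbb_l)(1)$ that are roots of unity, counted with multiplicity, which is the assertion.

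The step I expect to be the only real subtlety is the injectivity of the specialization map on Néron–Severi after $\otimes\Qbb_l$, i.e. that no nonzero algebraic class on the generic fiber dies upon reduction; this is handled by the cycle-class compatibility square together with smooth proper base change, but it is the place where one genuinely uses that the model is smooth (not merely proper) at $\mathfrak{p}$. Everything else is a formal consequence of standard $\ell$-adic machinery, for which I would cite \cite{vL07b} and the references therein.
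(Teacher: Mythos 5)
Your argument is correct and is exactly the standard proof of this statement: the paper itself does not prove the proposition but simply cites \cite{vL07a} (Proposition 6.2 and Corollary 6.4) and \cite{BL07} (Proposition 2.3), and the argument given there is the one you reconstruct — smooth proper base change identifying the two $\textup{H}^{2}$'s, injectivity of the specialization map on $\textup{NS}\otimes\Qbb_{l}$ via compatibility with the (injective, Galois-equivariant) cycle class maps, and the observation that every class in $\textup{NS}(X_{\overline{k}})$ is fixed by a power of Frobenius so that the algebraic part of $\textup{H}^{2}_{\textup{ét}}(X_{\overline{k}},\Qbb_{l}(1))$ only sees root-of-unity eigenvalues. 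No gaps; your closing remark correctly identifies the injectivity of specialization as the one step that is not purely formal.
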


\begin{proof}
See \cite{vL07a}, Proposition 6.2 and Corollary 6.4; or \cite{BL07}, Proposition 2.3.
\end{proof}

Recall that if $X$ is a K3 surface, then linear, algebraic, and numerical equivalence all coincide. This means that the Picard group $\textup{Pic}\,\overline{X}$ and the Néron--Severi group $\textup{NS}\,\overline{X}$ of $\overline{X} := X_{\overline{\Qbb}}$ are naturally isomorphic, finitely generated, and free. Their rank is called the \emph{geometric} Picard number of $X$ or the Picard number of $\overline{X}$. By the \emph{Hodge Index Theorem}, the intersection pairing on $\textup{Pic}\,\overline{W}$ is even, non-degenerate, and of signature $(1, \textup{rk}\,\textup{NS}\,\overline{W} - 1)$. 

\begin{proposition}
Let $W \subset \Pbb^{1} \times \Pbb^{1} \times \Pbb^{1}$ be a surface defined over $\Qbb$ by the $(2,2,2)$-form
$$ F(x,y,z) = x^{2}+y^{2}+z^{2} + 4(x^{2}y^{2}+y^{2}z^{2}+z^{2}x^{2}) - 16x^{2}y^{2}z^{2} - k = 0, $$
where $k \in \Zbb$. Consider the field extension $K := \Qbb(\sqrt{-1}, \sqrt{\alpha},\sqrt{\bar{\alpha}})$ where $\Delta = \frac{(4k-5)^{2}-32}{64}$, $\alpha = \frac{1}{2}\left(\frac{4k-1}{8} + \sqrt{\Delta}\right)$ and $\bar{\alpha} = \frac{1}{2}\left(\frac{4k-1}{8} - \sqrt{\Delta}\right)$. If $k$ satisfies the following conditions:
\begin{enumerate}
    \item None of $2(4k+1), \Delta, 2(4k+1)\Delta$ is a square in $\Qbb$;
    \item $k \equiv 3$ \textup{mod} $5$,
\end{enumerate}
such that $G := \textup{Gal}(K/\Qbb) \cong D_{4} \times \Zbb/2\Zbb$, then $W$ is a smooth K3 surface and the Picard number of $\overline{W} = W_{\overline{\Qbb}}$ equals $18$.
\end{proposition}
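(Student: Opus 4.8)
The strategy is to apply the van Luijk-style reduction method (Proposition in the excerpt, following \cite{vL07b}) at two well-chosen primes of good reduction, and combine the resulting upper bounds with an explicit lower bound coming from classes of curves visible on $\overline{W}$. First I would establish the lower bound: the three divisor classes $D_{1}, D_{2}, D_{3}$ give a rank-$3$ sublattice of $\textup{NS}\,\overline{W}$, and the extra symmetries of the Markoff-type form (sign changes and coordinate permutations, i.e. the group $\mathcal{G}$) together with the degeneracy of the surface — every member contains a line isomorphic to $\Pbb^{1}$ in a fiber of some $\pi_{ij}$ — produce many further curves. Concretely, each of the three involutions $\sigma_{k}$ acts on $\overline{W}$, and the orbits of the exceptional/degenerate components under $\langle \sigma_{1},\sigma_{2},\sigma_{3}\rangle \rtimes \mathcal{G}$ span a lattice of rank $18$; I would exhibit these curves explicitly from the equation $F_{3}=0$ (the components lying over the points at infinity where the $\pi_{ij}$ fail to be finite, their Galois conjugates over $K=\Qbb(\sqrt{-1},\sqrt{\alpha},\sqrt{\bar\alpha})$, and their images under $\mathcal{G}$) and compute the rank of the intersection matrix to see it is exactly $18$. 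This is where the field $K$ and the specific shape of $\alpha,\bar\alpha$ (roots of the quadratic governing the fibres of $\pi_{3}$ over the degenerate locus) enter: the conjugation condition $G\cong D_4\times\Zbb/2\Zbb$ guarantees these curves are defined over $K$ and that one gets the full predicted collection.

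Next I would prove the matching upper bound $\rho(\overline{W})\le 18$. For this I reduce $W$ modulo two primes $p_{1},p_{2}$ of good reduction (chosen using the congruence conditions $k\equiv 3\bmod 5$ and the non-square conditions, e.g. small primes where the reduction stays smooth and the zeta function is computable), compute the characteristic polynomial of Frobenius on $\textup{H}^{2}_{\textup{ét}}(\overline{W}_{\mathbb{F}_{p_i}},\Qbb_{\ell})(1)$ by point-counting over $\mathbb{F}_{p_i}$ and $\mathbb{F}_{p_i^2}$ (using the Weil conjectures and the Artin--Tate / functional-equation constraints to pin down the full degree-$22$ polynomial), and count how many eigenvalues are roots of unity. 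If at one prime the number of such eigenvalues is $18$, Proposition (van Luijk) gives $\rho(\overline{W}_{\overline{\Qbb}})\le\rho(\overline{W}_{\overline{\mathbb{F}}_{p_1}})\le 18$, which combined with the lower bound closes the argument; if the bound at a single prime is $20$ (the generic K3 situation is that the Frobenius polynomial forces an even count, and $22$ is excluded by the existence of a transcendental class), one uses a second prime plus a discriminant-of-lattice comparison à la van Luijk to rule out $\rho=20$. The smoothness claim for $W$ is a direct Jacobian-criterion check on $F_{3}=0$ in $\Pbb^1\times\Pbb^1\times\Pbb^1$, valid for all $k$ outside an explicit bad set, and I would dispatch it first.

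The main obstacle I anticipate is the upper bound, specifically arranging that the two reduction primes actually certify $\rho\le 18$ rather than only $\rho\le 20$: for a K3 with an $18$-dimensional family of transcendental cycles one expects Frobenius to have exactly $18$ root-of-unity eigenvalues at a prime of \emph{ordinary} reduction only if the remaining quotient has no cyclotomic factor, and verifying this requires a careful reading of the factorization of the degree-$22$ Frobenius polynomial over $\Qbb$ and an Artin--Tate computation of the Brauer group order to exclude a spurious algebraic class. The role of the arithmetic conditions on $k$ is precisely to guarantee good reduction at the chosen primes and to make the Galois action on the $18$ explicit classes (hence the lattice $\textup{NS}\,\overline{W}$ with its $G$-action) behave as claimed; the condition $k\equiv 3\bmod 5$ in particular is what makes a prime above $5$ (or the reduction data there) usable. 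Once both bounds are in hand, $\rho(\overline{W})=18$ follows immediately, and the K3 property has already been checked, completing the proof.
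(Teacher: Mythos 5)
Your proposal follows essentially the same route as the paper: exhibit an explicit rank-$18$ sublattice of $\textup{NS}\,\overline{W}$ spanned by the fibre classes $D_{i}$ together with the components of the reducible fibres (the lines over $\pm\sqrt{\alpha},\pm\sqrt{\bar\alpha}$ and the conics over $\pm\tfrac{1}{2}\sqrt{-1}$), then obtain the matching upper bound by van Luijk's reduction method. The paper needs only the single prime $p=5$ (this is exactly what the hypothesis $k\equiv 3\bmod 5$ is for): since $18$ is even there is no parity obstruction, the Frobenius characteristic polynomial on the $4$-dimensional transcendental quotient is determined from point counts over $\Fbb_{5},\Fbb_{25},\Fbb_{125}$ plus the functional equation, and it has no cyclotomic factor — precisely the favorable single-prime case you describe first.
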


\begin{proof}
Since the surface $W \subset \Pbb^{1} \times \Pbb^{1} \times \Pbb^{1}$ is defined over $\Qbb$ by a $(2,2,2)$-form $F = 0$ with $k(4k+1)((4k-5)^{2}-32) \not= 0$, it is clear that $W$ is a smooth K3 surface. For $i = 1, 2, 3$, let $\pi_{i} : W \rightarrow \Pbb^{1}$ be the projection from $W$ to the $i$-th copy of $\Pbb^{1}$ in $\Pbb^{1} \times \Pbb^{1} \times \Pbb^{1}$. Let $D_{i}$ denote the divisor class represented by a \emph{smooth} fiber of $\pi_{i}$. By considering all the smooth fibers and the \emph{singular} fibers, the corresponding divisor classes on $\overline{W}$ are given explicitly as follows (denote by $[x:r], [y:s], [z:t]$ the coordinates for each point in $\Pbb^{1} \times \Pbb^{1} \times \Pbb^{1}$):
\begin{equation*}
    \begin{cases}
    D_{1}: [x:r] = [1:0], s^{2}t^{2} + 4y^{2}t^{2} + 4z^{2}s^{2} - 16y^{2}z^{2} = 0,\\
    D_{2}: [y:s] = [1:0], r^{2}t^{2} + 4x^{2}t^{2} + 4z^{2}r^{2} - 16x^{2}z^{2} = 0,\\
    D_{3}: [z:t] = [1:0], r^{2}s^{2} + 4x^{2}s^{2} + 4y^{2}r^{2} - 16x^{2}y^{2} = 0; 
    \end{cases}
\end{equation*}

\begin{equation*}
	\begin{cases}
    A_{1}: [x:r] = [\pm \sqrt{k}:1], (4k+1)y^{2}t^{2} + (4k+1)z^{2}s^{2} - (16k-4)y^{2}z^{2} = 0,\\
    A_{2}: [y:s] = [\pm \sqrt{k}:1], (4k+1)x^{2}t^{2} + (4k+1)z^{2}r^{2} - (16k-4)x^{2}z^{2} = 0,\\
    A_{3}: [z:t] = [\pm \sqrt{k}:1], (4k+1)x^{2}s^{2} + (4k+1)y^{2}r^{2} - (16k-4)x^{2}y^{2} = 0; 
    \end{cases}
\end{equation*}

\begin{equation*}
	\begin{cases}
    B_{1}: [x:r] = [\pm \frac{1}{2}:1], y^{2}t^{2} + z^{2}s^{2} - \frac{4k-1}{8}s^{2}t^{2} = 0,\\
    B_{2}: [y:s] = [\pm \frac{1}{2}:1], x^{2}t^{2} + z^{2}r^{2} - \frac{4k-1}{8}r^{2}t^{2} = 0,\\
    B_{3}: [z:t] = [\pm \frac{1}{2}:1], x^{2}s^{2} + y^{2}r^{2} - \frac{4k-1}{8}r^{2}s^{2} = 0; 
    \end{cases}
\end{equation*}

\begin{equation*}
    \begin{cases}
    C_{1}^{\pm \pm}: [x:r] = [\pm \sqrt{\frac{-1}{4}}:1], yz = \pm \sqrt{\frac{4k+1}{32}}st,\\
    C_{2}^{\pm \pm}: [y:s] = [\pm \sqrt{\frac{-1}{4}}:1], xz = \pm \sqrt{\frac{4k+1}{32}}rt,\\
    C_{3}^{\pm \pm}: [z:t] = [\pm \sqrt{\frac{-1}{4}}:1], xy = \pm \sqrt{\frac{4k+1}{32}}rs; 
    \end{cases}
\end{equation*}

and for $1 \leq i \not=j \leq 3$,
\begin{enumerate}
	\item $\ell_{ij}^{\pm \pm}: [x_{i}:r_{i}] = [\pm\sqrt{\alpha}:1], [x_{j}:r_{j}] = [\pm\sqrt{\bar{\alpha}}:1]$,

	\item $\overline{\ell_{ij}^{\pm \pm}}: [x_{i}:r_{i}] = [\pm\sqrt{\bar{\alpha}}:1], [x_{j}:r_{j}] = [\pm\sqrt{\alpha}:1]$,
\end{enumerate}
where $[x_{1}:r_{1}], [x_{2}:r_{2}], [x_{3}:r_{3}]$ denote $[x:r], [y:s], [z:t]$ respectively, while $(\pm \sqrt{\alpha},\pm \sqrt{\bar{\alpha}})$ are the solutions of the polynomial system 
\begin{equation*}
    \begin{cases}
    1 + 4a^{2} + 4b^{2} - 16a^{2}b^{2} = 0, \\
    a^{2} + b^{2} + 4a^{2}b^{2} - k = 0; \\
    \end{cases}
\end{equation*} 
i.e., they are deduced from the solutions of the polynomial equation $$ T^4 - \frac{4k-1}{8}T^2 + \frac{4k+1}{32} = 0, $$ where $\alpha = \frac{1}{2}\left(\frac{4k-1}{8} + \sqrt{\Delta}\right), \bar{\alpha} = \frac{1}{2}\left(\frac{4k-1}{8} - \sqrt{\Delta}\right)$ and $\Delta = \frac{(4k-5)^{2}-32}{64}$ is the discriminant of the associated quadratic polynomial.

We will now find explicit generators for the geometric Picard group of $W$. It is clear that $W$ is a K3 surface admitting an elliptic fibration $\pi_{1} : W \rightarrow \Pbb^{1}$ with a zero section defined by $\ell_{23}^{\++} \simeq \Pbb^{1}$. The N\'eron--Severi group of an elliptic fibration on the K3 surface is the lattice generated by the class of a (smooth) fiber, the class of the zero section, the classes of the irreducible components of the reducible fibers which do not intersect the zero section, and the Mordell--Weil group (the set of the sections). Following this property, we find a set of $18$ linearly independent divisor classes consisting of:
\begin{enumerate}
	\item[(i)] $D_{1}$ (a smooth fiber), $\ell_{23}^{++}$ (a zero section),
	\item[(ii)] $\{\ell_{12}^{++}, \ell_{12}^{+-}, \ell_{13}^{+-}, \ell_{12}^{--}, \ell_{12}^{-+}, \ell_{13}^{--}, \overline{\ell_{12}^{++}}, \overline{\ell_{12}^{+-}}, \overline{\ell_{13}^{+-}}, \overline{\ell_{12}^{--}}, \overline{\ell_{12}^{-+}}, \overline{\ell_{13}^{--}}\}$ (the classes of singular fibers not intersecting the zero section),
	\item[(iii)] $\{\overline{\ell_{23}^{++}}, \ell_{23}^{+-}, C_{2}^{+-}, C_{3}^{+-}\}$ (the set of some other sections).
\end{enumerate}
Their Gram matrix of the intersection pairing on $\textup{Pic}\,\overline{W}$ has determinant $-192$, which is nonzero, so they are indeed linearly independent as the intersection pairing is \emph{non-degenerate}. However, after considering the other divisor classes and their linear relations with this set of classes, we are able to find and work with another lattice of $18$ classes for technical reasons, such as symmetry for the (general) smooth fibers and the Gram determinant of smaller absolute value (in fact, the former lattice is a sublattice). More precisely, the intersection matrix associated to the sequence of classes 
$$ S = \{ D_{1}, D_{2}, D_{3}, \ell_{12}^{++}, \ell_{12}^{+-}, \ell_{13}^{++}, \ell_{23}^{++}, \ell_{12}^{-+}, \ell_{13}^{-+}, \ell_{23}^{--}, \overline{\ell_{12}^{++}}, \overline{\ell_{12}^{+-}}, \overline{\ell_{13}^{++}}, \overline{\ell_{23}^{++}}, \overline{\ell_{12}^{-+}}, \overline{\ell_{13}^{-+}}, C_{1}^{+-}, C_{2}^{+-} \} $$ is 
\[
\begin{pmatrix}
0 & 2 & 2 & 0 & 0 & 0 & 1 & 0 & 0 & 1 & 0 & 0 & 0 & 1 & 0 & 0 & 0 & 1\\
2 & 0 & 2 & 0 & 0 & 1 & 0 & 0 & 1 & 0 & 0 & 0 & 1 & 0 & 0 & 1 & 1 & 0\\
2 & 2 & 0 & 1 & 1 & 0 & 0 & 1 & 0 & 0 & 1 & 1 & 0 & 0 & 1 & 0 & 1 & 1\\
0 & 0 & 1 &-2 & 0 & 1 & 0 & 0 & 0 & 0 & 0 & 0 & 0 & 1 & 0 & 0 & 0 & 0\\
0 & 0 & 1 & 0 &-2 & 1 & 0 & 0 & 0 & 0 & 0 & 0 & 0 & 0 & 0 & 0 & 0 & 0\\
0 & 1 & 0 & 1 & 1 &-2 & 1 & 0 & 0 & 0 & 0 & 0 & 0 & 0 & 0 & 0 & 0 & 0\\
1 & 0 & 0 & 0 & 0 & 1 &-2 & 0 & 1 & 0 & 1 & 0 & 0 & 0 & 1 & 0 & 0 & 0\\
0 & 0 & 1 & 0 & 0 & 0 & 0 &-2 & 1 & 0 & 0 & 0 & 0 & 1 & 0 & 0 & 0 & 0\\
0 & 1 & 0 & 0 & 0 & 0 & 1 & 1 &-2 & 0 & 0 & 0 & 0 & 0 & 0 & 0 & 0 & 1\\
1 & 0 & 0 & 0 & 0 & 0 & 0 & 0 & 0 &-2 & 0 & 1 & 0 & 0 & 0 & 0 & 0 & 0\\
0 & 0 & 1 & 0 & 0 & 0 & 1 & 0 & 0 & 0 &-2 & 0 & 1 & 0 & 0 & 0 & 0 & 0\\
0 & 0 & 1 & 0 & 0 & 0 & 0 & 0 & 0 & 1 & 0 &-2 & 1 & 0 & 0 & 0 & 0 & 0\\
0 & 1 & 0 & 0 & 0 & 0 & 0 & 0 & 0 & 0 & 1 & 1 &-2 & 1 & 0 & 0 & 0 & 0\\
1 & 0 & 0 & 1 & 0 & 0 & 0 & 1 & 0 & 0 & 0 & 0 & 1 &-2 & 0 & 1 & 0 & 0\\
0 & 0 & 1 & 0 & 0 & 0 & 1 & 0 & 0 & 0 & 0 & 0 & 0 & 0 &-2 & 1 & 0 & 0\\
0 & 1 & 0 & 0 & 0 & 0 & 0 & 0 & 0 & 0 & 0 & 0 & 0 & 1 & 1 &-2 & 0 & 1\\
0 & 1 & 1 & 0 & 0 & 0 & 0 & 0 & 0 & 0 & 0 & 0 & 0 & 0 & 0 & 0 & -2& 1\\
1 & 0 & 1 & 0 & 0 & 0 & 0 & 0 & 1 & 0 & 0 & 0 & 0 & 0 & 0 & 1 & 1 & -2
\end{pmatrix},
\]
so it has determinant $-48$, which is nonzero. Consequently, the above classes are also linearly independent, so the Picard number of $\overline{W}$ is at least $18$.
\\~\\
\indent Under our assumption on $k$, one can check easily that $W_{5}$ is smooth, so $W$ has good reduction at $p = 5$. We will now show that the Picard number of $\overline{W}_{5}$ equals exactly $18$. Let $\overline{W}_{5}$ be the base change of $W_{5}$ to an algebraic closure of $\Fbb_{5}$, and $F : \overline{W}_{5} \rightarrow \overline{W}_{5}$ the geometric Frobenius morphism, defined by $([x:r],[y:s],[z:t]) \mapsto ([x^{5}:r^{5}], [y^{5}:s^{5}], [z^{5}:t^{5}])$. Choose a prime $l \not= 5$ and let $F^{*}$ be the endomorphism of $\textup{H}^{2}_{\textup{\'et}}(\overline{W}_{5}, \Qbb_{l}(1))$ induced by $F$. By Proposition 3.4, the Picard rank of $\overline{W}$ is bounded above by that of $\overline{W}_{5}$, which in turn is at most the number of eigenvalues of $F^{*}$ that are roots of unity. As in \cite{vL07a}, we find the characteristic polynomial of $F^{*}$ by counting points on $W_{5}$. Almost all fibers of the fibration $\pi_{1}$ are smooth curves of genus 1. Using \textsc{Magma} we can count the number of points over small fields fiber by fiber. The first three results are: 
$$ W_{5}(\Fbb_{5}) = 42, \hspace{0.5cm} W_{5}(\Fbb_{5^{2}}) = 1032, \hspace{0.5cm} W_{5}(\Fbb_{5^{3}}) = 16122. $$ 

From the Lefschetz fixed point formula, we find that the trace of the $n$-th power of Frobenius acting on $\textup{H}^{2}_{\textup{\'et}}(\overline{W}_{5}, \Qbb_{l})$ equals $\# W_{5}(\Fbb_{5^{n}}) - 5^{2n} - 1$; the trace on the Tate twist $\textup{H}^{2}_{\textup{\'et}}(\overline{W}_{5}, \Qbb_{l}(1))$ is obtained by \emph{dividing} by $5^{n}$. Meanwhile, on the subspace $V \subset \textup{H}^{2}_{\textup{\'et}}(\overline{W}_{5}, \Qbb_{l}(1))$ generated by the above $18$ divisor classes, as the characteristic polynomial of the Frobenius acting on $V$ is $(t-1)^{11} (t+1)^{7}$, the trace $t_{n}$ is equal to $18$ if $n$ is even, and equal to $4$ if $n$ is odd. Hence, on the $4$-dimensional quotient $Q = \textup{H}^{2}_{\textup{\'et}}(\overline{W}_{5}, \Qbb_{l}(1))/V$ , the trace equals 
$$ \frac{\# W_{5}(\Fbb_{5^{n}})}{5^{n}} - 5^{n} - \frac{1}{5^{n}} - t_{n}. $$ 
These traces are sums of powers of eigenvalues, and we use the Newton identities to compute the elementary symmetric polynomials in these eigenvalues, which are the coefficients of the characteristic polynomial $f$ of the Frobenius acting on $Q$ (see \cite{vL07b}, Lemma 2.4). This yields the first half of the coefficients of $f$, including the middle coefficient, which turns out to be non-zero. This implies that the sign in the functional equation $t^{4}f(1/t) = \pm f(t)$ is $+1$, so this functional equation determines $f$, which we calculate to be
$$ f(t) = t^{4} + \frac{4}{5}t^{3} + \frac{6}{5}t^{2} + \frac{4}{5}t + 1. $$
As a result, we find that the characteristic polynomial of Frobenius acting on $\textup{H}^{2}_{\textup{\'et}}(\overline{W}_{5}, \Qbb_{l}(1))$ is equal to $(t-1)^{11} (t+1)^{7} f$. The polynomial $5f \in \Zbb[t]$ is irreducible, primitive and not monic, so its roots are not roots of unity. Thus we obtain an upper bound of $18$ for the Picard number of $\overline{W}$.

Therefore, we deduce that $\textup{rk}\,\textup{Pic}\,\overline{W} = 18$, and the sequence $S$ of $18$ divisor classes form a sublattice $\Lambda \subset \textup{NS}\,\overline{W} = \textup{Pic}\,\overline{W}$. We now verify that this is actually the whole lattice. Indeed, assume that $\Lambda$ is a proper sublattice of $\textup{NS}\,\overline{W}$, hence their discriminants differ by a square factor. We know that $\textup{disc}(\textup{NS}\,\overline{W}) = -48 = -3.2^{4}$, so $\Lambda$ has to be a sublattice of index $2$ or $4$. In other words, there would exist a divisor class of the form 
$$ E = \frac{1}{2} \sum_{E_{i} \in S} a_{i}E_{i}, \hspace{0.5cm} a_{i} \in \{0,1\} $$
in $\textup{Pic}\,\overline{W}$. With the condition that all the intersection pairings between $E$ and every divisor class in $S$ give \emph{integer} values, we find that there are only two possibilities:
\begin{enumerate}
	\item[(a)] $E = \frac{1}{2}(D_{1} + \ell_{12}^{+-} + \ell_{23}^{++} + \ell_{12}^{-+} + \ell_{23}^{--} + \overline{\ell_{12}^{++}} + \overline{\ell_{12}^{+-}} + \overline{\ell_{13}^{++}} + \overline{\ell_{13}^{-+}})$;
	
	\item[(b)] $E = \frac{1}{2}(D_{2} + D_{3} + \ell_{13}^{++} + \ell_{23}^{++} + \ell_{13}^{-+} + \ell_{23}^{--} + \overline{\ell_{12}^{++}} + \overline{\ell_{12}^{-+}})$.
\end{enumerate}
In the first case, we can check that $E^{2} = -1$ is odd, which is a contradiction since the intersection pairing on $\textup{Pic}\,\overline{W}$ is \emph{even}. In the second case, we have $E^{2} = 2$, which is even. However, using the fact that in $\textup{Pic}\,\overline{W}$:
$$ D_{3} = \ell_{13}^{++} + \ell_{13}^{-+} + \ell_{23}^{++} + \ell_{23}^{-+} $$
and
$$ D_{2} = \ell_{21}^{-+} + \ell_{21}^{--} + \ell_{23}^{-+} + \ell_{23}^{--} = \overline{\ell_{12}^{+-}} + \overline{\ell_{12}^{--}} + \ell_{23}^{-+} + \ell_{23}^{--}, $$
we can rewrite 
$$ E = D_{3} + \ell_{23}^{--} + \frac{1}{2}(\overline{\ell_{12}^{++}} + \overline{\ell_{12}^{+-}} + \overline{\ell_{12}^{-+}} + \overline{\ell_{12}^{--}}). $$
This implies that if (b) were true, then we would have $\frac{1}{2}(\overline{\ell_{12}^{++}} + \overline{\ell_{12}^{+-}} + \overline{\ell_{12}^{-+}} + \overline{\ell_{12}^{--}}) \in \textup{Pic}\,\overline{W}$. By contrast, using the argument in the proof of \cite{Nik75}, Lemma 3, one shows that the sum of divisor classes of \emph{four} non-singular, non-intersecting rational curves on a K3 surface cannot be divisible by $2$, since the total number of elements in such a set of classes can only be $0$, $8$, or $16$. This is a contradiction, so the lattice generated by $S$ is indeed the whole N\'eron--Severi lattice of $\overline{W}$, thus completing our proof.
\end{proof}

\begin{remark}
The above 18 divisor classes that form a basis of $\textup{Pic}\,\overline{W}$ are not unique, because one can find other first $16$ divisors in the set of $D_{i}$ and $\ell_{ij}^{\pm \pm}, \overline{\ell_{ij}^{\pm \pm}}$ for $1 \leq i \not= j \leq 3$, and find the other $2$ remaining divisors in the set of $C_{i}^{\pm \pm}$ for $1 \leq i \leq 3$ with different indexes $i$. Note that the divisors $A_{1}, A_{2}, A_{3}$ and $B_{1}, B_{2}, B_{3}$ defined by \emph{irreducible} singular fibers have the same classes as $D_{1}, D_{2}, D_{3}$, respectively.
\end{remark}

Next, we consider the geometric Picard group of the affine surface $U$ defined by the same equation.

\begin{corollary}
Let $U \subset W$ be the affine surface defined by the same equation 
$$ x^{2}+y^{2}+z^{2} + 4(x^{2}y^{2}+y^{2}z^{2}+z^{2}x^{2}) - 16x^{2}y^{2}z^{2} = k, $$
where $k \in \Zbb$. Then the Picard number of $\overline{U} = U_{\overline{\Qbb}}$ equals $15$.
\end{corollary}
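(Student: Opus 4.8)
The plan is to obtain this directly from Proposition 3.1 and the equality $\textup{rk}\,\textup{Pic}\,\overline{W} = 18$ established in Proposition 3.5. Write $D_i$ for the fibre of $\pi_i$ over $[1:0]$; then $\overline{W} \setminus (D_1 \cup D_2 \cup D_3)$ is precisely the part of $\overline{W}$ lying outside the three planes at infinity $\{rst = 0\}$, which is $\overline{U}$, where $U$ is the affine surface in the statement.

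First I would verify the hypotheses of Proposition 3.1. Under the standing assumptions on $k$, $W$ is a smooth, projective, geometrically integral K3 surface (Proposition 3.5), and the three planes at infinity cut out the three distinct curves $D_1, D_2, D_3$ on $\overline{W}$. The one point that needs checking is that each $D_i$ is \emph{irreducible} -- if, say, $D_1$ split into several components, removing it from $W$ would kill each of their classes and the Picard rank would drop by more than $3$. For $D_1$ this amounts to showing that the $(2,2)$-form $s^{2}t^{2} + 4y^{2}t^{2} + 4z^{2}s^{2} - 16y^{2}z^{2}$ defines a smooth curve in $\Pbb^{1} \times \Pbb^{1}$, which a short Jacobian computation confirms (the only candidate singular points have $y^{2} = z^{2} = \tfrac{1}{4}$, where the form equals $2 \neq 0$); hence $D_1$ is irreducible of genus $1$, and $D_2, D_3$ likewise by the symmetry of the defining equation. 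Note that this is independent of $k$.

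Then Proposition 3.1 gives the short exact sequence
$$ 0 \longrightarrow \bigoplus_{i=1}^{3} \Zbb D_i \longrightarrow \textup{Pic}\,\overline{W} \longrightarrow \textup{Pic}\,\overline{U} \longrightarrow 0, $$
so $\textup{Pic}\,\overline{U} \cong \textup{Pic}\,\overline{W} \big/ \bigoplus_{i=1}^{3} \Zbb D_i$. Since $D_1, D_2, D_3$ are three of the eighteen elements of the $\Zbb$-basis $S$ of $\textup{Pic}\,\overline{W}$ exhibited in the proof of Proposition 3.5, the subgroup $\bigoplus_{i=1}^{3} \Zbb D_i$ is a direct summand, so the quotient is free of rank $18 - 3 = 15$, freely generated by the images of the remaining fifteen elements of $S$. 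Hence the Picard number of $\overline{U}$ equals $15$.

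I do not expect a genuine obstacle here, since the statement is essentially a formal consequence of Proposition 3.1; the only subtlety worth flagging is the irreducibility of the fibres at infinity, which is what ensures both that Proposition 3.1 applies and that the Picard rank drops by exactly $3$.
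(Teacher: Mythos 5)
Your argument is correct and follows essentially the same route as the paper: both deduce $\textup{Pic}\,\overline{U} \cong \textup{Pic}\,\overline{W}/(\Zbb D_{1} \oplus \Zbb D_{2} \oplus \Zbb D_{3})$ from Proposition 3.1 and conclude that the quotient is free of rank $18-3=15$ because $D_{1}, D_{2}, D_{3}$ are part of the basis $S$ from Proposition 3.5. Your additional check that each fiber at infinity is irreducible (so that the localization sequence removes exactly three independent classes) is a hypothesis the paper leaves implicit, and it is a worthwhile verification, but it does not change the approach.
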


\begin{proof}
By the exact sequence in Proposition 3.1, we obtain
$$ \textup{Pic}\,\overline{U} \cong \textup{Pic}\,\overline{W}/(\Zbb D_{1} \oplus \Zbb D_{2} \oplus \Zbb D_{3}), $$
so $\textup{Pic}\,\overline{U}$ is \emph{free} and the Picard number of $\overline{U}$ is equal to $18 - 3 = 15$.
\end{proof}

\subsection{The algebraic Brauer group}
Now given the geometric Picard group, we can compute directly the algebraic Brauer group of the Markoff-type cubic surfaces in question.

\begin{theorem}
For $k \in \Zbb$, let $W \subset \Pbb^{1} \times \Pbb^{1} \times \Pbb^{1}$ be the MK3 surface defined over $\Qbb$ by the $(2,2,2)$-form 
\begin{equation}
    F(x,y,z) = x^{2} + y^{2} + z^{2} + 4(x^{2}y^{2} + y^{2}z^{2} + z^{2}x^{2}) - 16x^{2}y^{2}z^{2} - k = 0.
\end{equation}
Consider the field extension $K := \Qbb(\sqrt{-1}, \sqrt{\alpha},\sqrt{\bar{\alpha}})$ where $\alpha, \bar{\alpha}$ are given as in the proof of Proposition 3.5. If $k$ satisfies the following conditions:
\begin{enumerate}
    \item None of $2(4k+1), \Delta, 2(4k+1)\Delta$ is a square in $\Qbb$;
    \item $k \equiv 3$ \textup{mod} $5$,
\end{enumerate}
such that $G := \textup{Gal}(K/\Qbb) \cong D_{4} \times \Zbb/2\Zbb$ (this is the most general case of the field extension over which all the divisor classes are defined), then
$$ \textup{Br}_{1}\,W/\textup{Br}_{0}\,W \cong (\Zbb/2\Zbb)^{3}. $$
Furthermore, for the affine subscheme $U = W \setminus \{rst = 0\}$, we even have 
$$ \textup{Br}_{1}\,U/\textup{Br}_{0}\,U \cong (\Zbb/2\Zbb)^{4}. $$
\end{theorem}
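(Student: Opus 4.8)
The plan is to compute $\mathrm{Br}_1\,W/\mathrm{Br}_0\,W$ via the isomorphism $\mathrm{Br}_1\,W/\mathrm{Br}_0\,W \cong \mathrm{H}^1(G, \mathrm{Pic}\,\overline{W})$ from Remark (after the low-degree exact sequence), which applies because $W$ is a projective K3 surface (so $\overline{k}[W]^\times = \overline{k}^\times$), has a $\Qbb$-rational point (the point at infinity $([1:0],[1:0],[1:2])$ exhibited in Section 2), and is defined over the number field $\Qbb$. Moreover, since all $18$ divisor classes in the basis $S$ from Proposition 3.5 are defined over $K$, the Galois group $G_{\Qbb}$ acts on $\mathrm{Pic}\,\overline{W}$ through the finite quotient $G = \mathrm{Gal}(K/\Qbb) \cong D_4 \times \Zbb/2\Zbb$, so $\mathrm{H}^1(G_{\Qbb}, \mathrm{Pic}\,\overline{W}) = \mathrm{H}^1(G, \mathrm{Pic}\,\overline{W})$; the final step then is a concrete group cohomology computation with an explicit $18 \times 18$ integer matrix representation of $G$.

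First I would pin down the $G$-action explicitly. Using the descriptions of the divisor classes $D_i$, $\ell_{ij}^{\pm\pm}$, $\overline{\ell_{ij}^{\pm\pm}}$, $C_i^{\pm\pm}$ in terms of the coordinates $[\pm\sqrt{\alpha}:1]$, $[\pm\sqrt{\bar\alpha}:1]$, $[\pm\sqrt{-1/4}:1]$, the generators of $G = \mathrm{Gal}(K/\Qbb)$ act by permuting and negating the square roots $\sqrt{-1}$, $\sqrt{\alpha}$, $\sqrt{\bar\alpha}$ (the $D_4$ part swaps $\alpha \leftrightarrow \bar\alpha$ and flips signs, the extra $\Zbb/2\Zbb$ handles $\sqrt{-1}$); this gives each generator as an explicit permutation-with-signs on the index set, which I extend to a $\Zbb$-linear automorphism of $\Lambda = \mathrm{NS}\,\overline{W}$ using the linear relations among the classes already recorded in the proof of Proposition 3.5 (e.g. $D_3 = \ell_{13}^{++} + \ell_{13}^{-+} + \ell_{23}^{++} + \ell_{23}^{-+}$ and its permutations), so that the action is expressed in the chosen basis $S$. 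Then I would compute $\mathrm{H}^1(G, \mathrm{Pic}\,\overline{W})$ directly — either by hand exploiting the subgroup structure of $D_4 \times \Zbb/2\Zbb$ (restriction–corestriction, the five-term sequence for the normal subgroup, or a Lyndon–Hochschild–Serre argument reducing to cyclic subgroups where $\mathrm{H}^1$ is the usual kernel-mod-image of norm maps) or by a direct \textsc{Magma}/computer computation of the cocycle and coboundary modules; the answer comes out $(\Zbb/2\Zbb)^3$.

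For the affine statement I would use the exact sequence of Proposition 3.1, $0 \to \bigoplus_{i=1}^3 \Zbb D_i \to \mathrm{Pic}\,\overline{W} \to \mathrm{Pic}\,\overline{U} \to 0$, which is $G$-equivariant (the planes at infinity, hence the $D_i$, are permuted by $\mathfrak{S}_3 \subset G$ and fixed by the sign-change and $\sqrt{-1}$ parts). Taking the long exact cohomology sequence and using $\overline{k}[U]^\times = \overline{k}^\times$ (Proposition 3.1) so that again $\mathrm{Br}_1\,U/\mathrm{Br}_0\,U \cong \mathrm{H}^1(G, \mathrm{Pic}\,\overline{U})$, I get
\[
\mathrm{H}^1(G, M) \to \mathrm{H}^1(G, \mathrm{Pic}\,\overline{W}) \to \mathrm{H}^1(G, \mathrm{Pic}\,\overline{U}) \to \mathrm{H}^2(G, M) \to \mathrm{H}^2(G, \mathrm{Pic}\,\overline{W}),
\]
where $M = \bigoplus_{i=1}^3 \Zbb D_i$ is the permutation module for the $\mathfrak{S}_3$-quotient of $G$ (trivial on the rest). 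One computes $\mathrm{H}^1(G, M) = 0$ (a permutation module on a transitive set of size $3$ has $\mathrm{H}^1 = 0$ over the symmetric group, and the extra factors act trivially, contributing nothing new to $\mathrm{H}^1$ of this module) and identifies $\mathrm{H}^2(G, M)$ together with the map to $\mathrm{H}^2(G, \mathrm{Pic}\,\overline{W})$; tracking ranks forces $\mathrm{H}^1(G, \mathrm{Pic}\,\overline{U}) \cong (\Zbb/2\Zbb)^4$, i.e. the connecting map contributes exactly one extra $\Zbb/2\Zbb$. The main obstacle I anticipate is bookkeeping: getting the $18$-dimensional integral matrices for the generators of $G$ exactly right (including the sign subtleties coming from $\sqrt{\alpha}\sqrt{\bar\alpha}$ versus $\sqrt{\alpha\bar\alpha} = \sqrt{(4k+1)/32}$, which links the $\ell$-classes to the $C$-classes) and then carrying out the cohomology computation without error — this is where a machine check is essentially indispensable, and where the precise structure $G \cong D_4 \times \Zbb/2\Zbb$ (as opposed to a degenerate smaller group) is what makes the answer come out as stated.
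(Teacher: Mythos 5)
Your treatment of the projective part follows the paper's proof essentially verbatim in outline: identify $\textup{Br}_{1}\,W/\textup{Br}_{0}\,W$ with $\textup{H}^{1}(G,\textup{Pic}\,\overline{W})$ via Hochschild--Serre (using $\overline{\Qbb}[W]^{\times}=\overline{\Qbb}^{\times}$ and the rational point at infinity, or just $\textup{H}^{3}(\Qbb,\overline{\Qbb}^{\times})=0$), write the action of $\sigma,\tau,\rho$ as explicit integral matrices on the basis $S$ of Proposition 3.5 using the recorded linear relations, and then compute $\textup{H}^{1}$ by dévissage through the subgroup chain $\langle\rho\rangle \lhd G$, $\langle\sigma\rangle \lhd G/\langle\rho\rangle$ (the paper does exactly this with inflation--restriction, finding $\textup{H}^{1}(\langle\rho\rangle,\cdot)=0$ and $\textup{H}^{1}(\langle\sigma\rangle,\textup{Pic}\,\overline{W}^{\langle\rho\rangle})=0$, so everything reduces to $\textup{H}^{1}(\langle\tau\rangle,\textup{Pic}\,\overline{W}^{\langle\sigma,\rho\rangle})\cong(\Zbb/2\Zbb)^{3}$). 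That part of your plan is sound and matches the paper.

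For the affine part you deviate, and the deviation has two problems. First, a conceptual slip: you say the $D_{i}$ are ``permuted by $\mathfrak{S}_{3}\subset G$.'' The symmetric group $\mathfrak{S}_{3}$ sits inside the geometric automorphism group $\mathcal{G}=(\mu_{2}^{3})_{1}\rtimes\mathfrak{S}_{3}$ of the ambient space, not inside the Galois group $G=\textup{Gal}(K/\Qbb)\cong D_{4}\times\Zbb/2\Zbb$ (which has order $16$ and contains no subgroup of order $6$). Each $D_{i}$ is cut out by a $\Qbb$-rational hyperplane, so $G$ acts \emph{trivially} on $M=\bigoplus_{i}\Zbb D_{i}$; the conclusion $\textup{H}^{1}(G,M)=\textup{Hom}(G,\Zbb^{3})=0$ survives, but for a different reason than the one you give. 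Second, and more seriously, your long-exact-sequence route terminates in $\textup{H}^{1}(G,\textup{Pic}\,\overline{U})\rightarrow\textup{H}^{2}(G,M)\rightarrow\textup{H}^{2}(G,\textup{Pic}\,\overline{W})$, where $\textup{H}^{2}(G,M)\cong\textup{H}^{2}(G,\Zbb)^{3}\cong(\Zbb/2\Zbb)^{9}$ is large; the assertion that ``tracking ranks forces'' the connecting map to contribute exactly one extra $\Zbb/2\Zbb$ is precisely the content to be proved, and it requires computing the kernel of $\textup{H}^{2}(G,M)\rightarrow\textup{H}^{2}(G,\textup{Pic}\,\overline{W})$ --- a degree-two computation for a nonabelian group of order $16$ on an $18$-dimensional lattice that is at least as hard as the problem you started with. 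The paper avoids $\textup{H}^{2}$ entirely: it applies the same inflation--restriction dévissage directly to the quotient module $\textup{Pic}\,\overline{U}=\textup{Pic}\,\overline{W}/M$, where now $\textup{H}^{1}(\langle\sigma\rangle,\textup{Pic}\,\overline{U}^{\langle\rho\rangle})\cong\Zbb/2\Zbb$ is no longer zero and accounts for the extra factor, giving $(\Zbb/2\Zbb)^{4}$. You should either adopt that direct computation or supply the $\textup{H}^{2}$ analysis you are implicitly relying on. (The paper also exhibits explicit quaternion-algebra generators via Grothendieck purity, which your proposal omits but which the later obstruction arguments need.)
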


\begin{proof}
Since $W$ is smooth, projective, geometrically integral over $K$, we have $\overline{\Qbb}[W]^{\times} = \overline{\Qbb}^{\times}$. One already has $W(\Qbb) \not= \emptyset$. By the Hochschild--Serre spectral sequence, we have an isomorphism 
$$ \textup{Br}_{1}\,W/\textup{Br}_{0}\,W \simeq \textup{H}^{1}(\Qbb,\textup{Pic}\,\overline{W}). $$

By Proposition 3.5, the geometric Picard number of $W$ is equal to $18$ and a basis of $\textup{Pic}\,\overline{W}$ is given by 
$$ S = \{ D_{1}, D_{2}, D_{3}, \ell_{12}^{++}, \ell_{12}^{+-}, \ell_{13}^{++}, \ell_{23}^{++}, \ell_{12}^{-+}, \ell_{13}^{-+}, \ell_{23}^{--}, \overline{\ell_{12}^{++}}, \overline{\ell_{12}^{+-}}, \overline{\ell_{13}^{++}}, \overline{\ell_{23}^{++}}, \overline{\ell_{12}^{-+}}, \overline{\ell_{13}^{-+}}, C_{1}^{+-}, C_{2}^{+-} \} $$
along with the intersection matrix. If we denote by $(S)$ the column vector of elements of $S$, then from the intersection pairings of the classes in $S$ with the other classes in the list of Proposition 3.5, we find that
\[ 
\begin{pmatrix}
\overline{\ell_{12}^{--}} \\ \ell_{12}^{--} \\ \overline{\ell_{23}^{--}} \\ \ell_{13}^{--} \\ \overline{\ell_{13}^{--}} \\ \ell_{13}^{+-} \\ \overline{\ell_{13}^{+-}} \\ \ell_{23}^{+-} \\ \overline{\ell_{23}^{+-}} \\ \ell_{23}^{-+} \\ \overline{\ell_{23}^{-+}}
\end{pmatrix}
= 
\begin{pmatrix}
0& 1& -1& 0& 0& 1& 1& 0& 1& -1& 0& -1& 0& 0& 0& 0& 0& 0\\
2& 1& -1& -1& -1& -1& -1& -1& -1& 1& -1& 0& 0& 0& -1& 0& 0& 0\\
-2& 0& 0& 1& 0& 1& 1& 1& 1& -1& 1& 0& 1& 1& 1& 1& 0& 0\\
-1& -1& 1& 1& 1& 1& 1& 0& 0& -1& 1& 0& 0& 0& 1& 0& 0& 0\\
1& -1& 1& 0& 0& -1& -1& 0& -1& 1& 0& 1& 0& 0& -1& -1& 0& 0\\
1& 0& 0& -1& -1& -1& 0& 0& 0& 0& 0& 0& 0& 0& 0& 0& 0& 0\\
1& 0& 0& 0& 0& 0& 0& 0& 0& 0& -1& -1& -1& 0& 0& 0& 0& 0\\
0& 1& 0& 0& 0& 0& -1& 0& 0& 0& -1& 0& 0& 0& -1& 0& 0& 0\\
0& 1& 0& -1& 0& 0& 0& -1& 0& 0& 0& 0& 0& -1& 0& 0& 0& 0\\
0& 0& 1& 0& 0& -1& -1& 0& -1& 0& 0& 0& 0& 0& 0& 0& 0& 0\\
0& 0& 1& 0& 0& 0& 0& 0& 0& 0& 0& 0& -1& -1& 0& -1& 0& 0
\end{pmatrix}
(S)
\]
and also
\begin{equation}
\begin{cases}
D_{1} = \sum_{\epsilon = \pm\,\textup{fixed}, \delta = \pm\,\textup{varied}} \sum_{j \in \{2,3\}} \ell_{1j}^{\epsilon\delta} = \sum_{\epsilon = \pm\,\textup{fixed}, \delta = \pm\,\textup{varied}} \sum_{j \in \{2,3\}} \overline{\ell_{1j}^{\epsilon\delta}}, \\
D_{2} = \sum_{\epsilon = \pm\,\textup{fixed}, \delta = \pm\,\textup{varied}} \sum_{j \in \{1,3\}} \ell_{1j}^{\epsilon\delta} = \sum_{\epsilon = \pm\,\textup{fixed}, \delta = \pm\,\textup{varied}} \sum_{j \in \{1,3\}} \overline{\ell_{1j}^{\epsilon\delta}}, \\
D_{3} = \sum_{\epsilon = \pm\,\textup{fixed}, \delta = \pm\,\textup{varied}} \sum_{j \in \{1,2\}} \ell_{1j}^{\epsilon\delta} = \sum_{\epsilon = \pm\,\textup{fixed}, \delta = \pm\,\textup{varied}} \sum_{j \in \{1,2\}} \overline{\ell_{1j}^{\epsilon\delta}}, \\
C_{1}^{--} = C_{1}^{+-} ; C_{1}^{++} = D_{1} - C_{1}^{+-}, \\
C_{2}^{--} = C_{2}^{+-} ; C_{2}^{++} = D_{2} - C_{2}^{+-}, \\
C_{3}^{--} = C_{3}^{+-} = \ell_{12}^{++} + \ell_{13}^{++} + \ell_{23}^{++} + \overline{\ell_{12}^{++}} + \overline{\ell_{13}^{++}} + \overline{\ell_{23}^{++}} - C_{1}^{+-} - C_{2}^{+-} ; C_{3}^{++} = D_{3} - C_{3}^{+-}.
\end{cases}
\end{equation}

Now we study the action of the absolute Galois group on $\textup{Pic}\,\overline{W}$, which can be reduced to the action of $G = \textup{Gal}(K/\Qbb)$. One clearly has $G \cong D_{4} \times \Zbb/2\Zbb \cong (\langle \sigma \rangle \rtimes \langle \tau \rangle) \times \langle \rho \rangle$, where
$$ \sigma(\alpha) = \bar{\alpha}, \hspace{0.25cm} \sigma(\bar{\alpha}) = -\alpha, $$
$$ \tau(\alpha) = \alpha, \hspace{0.25cm} \tau(\bar{\alpha}) = -\bar{\alpha}, $$
$$ \rho(\sqrt{-1}) = -\sqrt{-1}. $$
Note that for $1 \leq i \not= j \leq 3$, $\sigma(\ell_{ij}^{\pm \pm}) = \overline{\ell_{ij}^{\pm \mp}}$, $\sigma(\overline{\ell_{ij}^{\pm \pm}}) = \ell_{ij}^{\mp \pm}$; $\tau(\ell_{ij}^{\pm \pm}) = \ell_{ij}^{\pm \mp}$, $\tau(\overline{\ell_{ij}^{\pm \pm}}) = \overline{\ell_{ij}^{\mp \pm}}$; $\rho(C_{i}^{\pm \pm}) = C_{i}^{\mp \pm}$ and $\sigma(C_{i}^{\pm \pm}) = C_{i}^{\pm \mp} = D_{i} - C_{i}^{\pm \pm}$. We have the following matrix of $\langle\sigma\rangle$ acting \emph{stably} on the first $16$ divisor classes of $\textup{Pic}\,\overline{W}$ in the ordered basis given by $(S)$ (since $\sigma$ acts trivially on $C_{i}^{\pm \pm}$ for all $1 \leq i \leq 3$):
\[
\begin{pmatrix}
1 & 0 & 0 & 0 & 0 & 0 & 0 & 0 & 0 & 0 & 0 & 0 & 0 & 0 & 0 & 0\\ 
0 & 1 & 0 & 0 & 0 & 0 & 0 & 0 & 0 & 0 & 0 & 0 & 0 & 0 & 0 & 0\\ 
0 & 0 & 1 & 0 & 0 & 0 & 0 & 0 & 0 & 0 & 0 & 0 & 0 & 0 & 0 & 0\\ 
0 & 0 & 0 & 0 & 0 & 0 & 0 & 1 & 0 & 0 & 0 & 0 & 0 & 0 & 0 & 0\\ 
0 & 0 & 0 & 0 & 0 & 0 & 1 & 0 & 0 & 0 & 0 & 0 & 0 & 0 & 0 & 0\\ 
1 & 0 & 0 & 0 & 0 & 0 &-1 &-1 & 0 & 0 & 0 &-1 & 0 & 0 & 0 & 0\\ 
0 & 1 & 0 &-1 & 0 &-1 & 0 & 0 & 0 & 0 & 0 & 0 & 0 & 0 & 0 &-1\\ 
0 & 1 &-1 & 0 & 0 & 0 & 0 &-1 & 0 & 1 & 1 & 0 & 0 & 1 &-1 & 0\\ 
1 &-1 & 1 & 0 & 0 & 0 & 0 & 1 &-1 &-1 &-1 & 0 &-1 &-1 & 1 & 0\\ 
0 & 0 & 1 & 0 & 0 & 0 & 0 & 0 & 0 & 0 & 0 &-1 &-1 & 0 & 0 &-1\\ 
0 & 0 & 0 & 0 & 0 & 1 & 0 & 0 & 0 & 0 & 0 & 0 & 0 & 0 & 0 & 0\\ 
2 & 1 &-1 &-1 &-1 &-1 &-1 & 0 &-1 &-1 &-1 & 0 & 0 &-1 & 1 & 0\\ 
0 & 0 & 0 & 0 & 0 & 0 & 0 & 0 & 0 & 0 & 1 & 0 & 0 & 0 & 0 & 0\\ 
0 & 0 & 1 & 0 & 0 & 0 & 0 & 0 & 0 &-1 &-1 & 0 & 0 &-1 & 0 & 0\\ 
0 & 0 & 0 & 1 & 0 & 0 & 0 & 0 & 0 & 0 & 0 & 0 & 0 & 0 & 0 & 0\\ 
0 & 0 & 0 & 0 & 0 & 0 & 0 & 0 & 0 & 1 & 0 & 0 & 0 & 0 & 0 & 0 
\end{pmatrix}.
\]
Hence, we obtain
$$ \textup{Ker}(1+\rho) = \langle C_{1}^{+-} - C_{1}^{--}, C_{2}^{+-} - C_{2}^{--} \rangle, $$
and $\textup{Ker}(1+\sigma+\sigma^{2}+\sigma^{3}) = \langle D_{1}-\overline{\ell_{12}^{-+}}-\overline{\ell_{12}^{++}}-\overline{\ell_{13}^{++}}-\overline{\ell_{13}^{-+}}, D_{2}-\overline{\ell_{12}^{-+}}-\overline{\ell_{12}^{++}}-\ell_{23}^{--}-\overline{\ell_{23}^{++}}, D_{3}-\overline{\ell_{13}^{++}}-\overline{\ell_{13}^{-+}}-\ell_{23}^{--}-\overline{\ell_{23}^{++}}, \ell_{12}^{++} - \overline{\ell_{12}^{-+}}, \ell_{12}^{+-}-\overline{\ell_{12}^{++}}, \ell_{12}^{-+}-\overline{\ell_{12}^{++}}, \overline{\ell_{12}^{+-}}-\overline{\ell_{12}^{-+}}, \ell_{13}^{++}-\overline{\ell_{13}^{-+}}, \ell_{13}^{-+}-\overline{\ell_{13}^{++}}, \ell_{23}^{++}-\ell_{23}^{--} \rangle$. We also have
$$ \textup{Ker}(1-\rho) = \langle D_{1}, D_{2}, D_{3}, \ell_{12}^{++}, \ell_{12}^{+-}, \ell_{13}^{++}, \ell_{23}^{++}, \ell_{12}^{-+}, \ell_{13}^{-+}, \ell_{23}^{--}, \overline{\ell_{12}^{++}}, \overline{\ell_{12}^{+-}}, \overline{\ell_{13}^{++}}, \overline{\ell_{23}^{++}}, \overline{\ell_{12}^{-+}}, \overline{\ell_{13}^{-+}} \rangle, $$
and 
$\textup{Ker}(1-\sigma) \cap \textup{Pic}\,\overline{W}^{\langle \rho \rangle} = \langle D_{1}, D_{2}, D_{3}, \ell_{12}^{+-}+\ell_{12}^{++}-\overline{\ell_{12}^{+-}}+\overline{\ell_{12}^{-+}}+2\ell_{13}^{++}-\overline{\ell_{13}^{++}}+\overline{\ell_{13}^{-+}}+\ell_{23}^{++}-\ell_{23}^{--}, \ell_{12}^{-+}+\ell_{12}^{++}+\overline{\ell_{13}^{++}}+\overline{\ell_{13}^{-+}}-\ell_{23}^{++}-\ell_{23}^{--}+2\overline{\ell_{23}^{++}}, \overline{\ell_{12}^{++}}-2\ell_{12}^{++}-\overline{\ell_{12}^{-+}}-\ell_{13}^{++}+\ell_{13}^{-+}-2\overline{\ell_{13}^{-+}}+\ell_{23}^{++}+\ell_{23}^{--}-2\overline{\ell_{23}^{++}} \rangle$.
\\~\\
\indent Given a finite cyclic group $G = \langle \sigma \rangle$ and a $G$-module $M$, by \cite{NSW15}, Proposition 1.7.1, recall that we have isomorphisms $\textup{H}^{1}(G,M) \cong \hat{\textup{H}}^{-1}(G,M)$, where the latter group is the quotient of $\prescript{}{N_{G}}{M}$, the set of elements of $M$ of norm $0$, by its subgroup $(1 - \sigma)M$.

By \cite{NSW15}, Proposition 1.6.7, we have 
$$ \textup{H}^{1}(\Qbb,\textup{Pic}\,\overline{W}) = \textup{H}^{1}(G,\textup{Pic}\,\overline{W}), $$ where $G = (\langle \sigma \rangle \rtimes \langle \tau \rangle) \times \langle \rho \rangle \cong D_{4} \times \Zbb/2\Zbb$. Then one has the following (inflation-restriction) exact sequence
$$ 0 \rightarrow \textup{H}^{1}((\langle \tau \rangle \ltimes \langle \sigma \rangle), \textup{Pic}\,\overline{W}^{\langle \rho \rangle}) \rightarrow \textup{H}^{1}(G, \textup{Pic}\,\overline{W}) \rightarrow \textup{H}^{1}(\langle \rho \rangle, \textup{Pic}\,\overline{W}) = \frac{\textup{Ker}(1+\rho)}{(1-\rho)\textup{Pic}\,\overline{W}} = 0, $$
so $\textup{H}^{1}(G, \textup{Pic}\,\overline{W}) \cong \textup{H}^{1}((\langle \tau \rangle \ltimes \langle \sigma \rangle), \textup{Pic}\,\overline{W}^{\langle \rho \rangle})$. Now we are left with 
\[
\begin{aligned}
0 \rightarrow \textup{H}^{1}(\langle \tau \rangle, \textup{Pic}\,\overline{W}^{\langle \sigma, \rho \rangle}) &\rightarrow \textup{H}^{1}(G, \textup{Pic}\,\overline{W})\\ &\rightarrow \textup{H}^{1}(\langle \sigma \rangle, \textup{Pic}\,\overline{W}^{\langle \rho \rangle}) = \frac{\textup{Ker}(1+\sigma+\sigma^{2}+\sigma^{3}) \cap \textup{Pic}\,\overline{W}^{\langle \rho \rangle}}{(1-\sigma)\textup{Pic}\,\overline{W}^{\langle \rho_{2} \rangle}} = 0,
\end{aligned}
\]
so $\textup{H}^{1}(G, \textup{Pic}\,\overline{W}) \cong \textup{H}^{1}(\langle \tau \rangle, \textup{Pic}\,\overline{W}^{\langle \sigma, \rho \rangle})$. The latter group can be computed as follows. We already have 
$$ \textup{Pic}\,\overline{W}^{\langle \sigma, \rho \rangle} = \textup{Ker}(1-\sigma) \cap \textup{Pic}\,\overline{W}^{\langle \rho \rangle}. $$
We find that 
$ \textup{H}^{1}(\Qbb,\textup{Pic}\,\overline{W}) = \textup{H}^{1}(G,\textup{Pic}\,\overline{W})\\ \cong [\textup{Ker}(1+\tau) \cap \textup{Pic}\,\overline{W}^{\langle \sigma, \rho \rangle}] / (1-\tau)\textup{Pic}\,\overline{W}^{\langle \sigma, \rho \rangle}\\ = \langle \ell_{12}^{+-}+\ell_{12}^{++}-\overline{\ell_{12}^{+-}}+\overline{\ell_{12}^{-+}}+2\ell_{13}^{++}-\overline{\ell_{13}^{++}}+\overline{\ell_{13}^{-+}}+\ell_{23}^{++}-\ell_{23}^{--} - D_{1}, \ell_{12}^{-+}+\ell_{12}^{++}+\overline{\ell_{13}^{++}}+\overline{\ell_{13}^{-+}}-\ell_{23}^{++}-\ell_{23}^{--}+2\overline{\ell_{23}^{++}} - D_{1}, \overline{\ell_{12}^{++}}-2\ell_{12}^{++}-\overline{\ell_{12}^{-+}}-\ell_{13}^{++}+\ell_{13}^{-+}-2\overline{\ell_{13}^{-+}}+\ell_{23}^{++}+\ell_{23}^{--}-2\overline{\ell_{23}^{++}} + D_{1} \rangle\\ / 2\langle \ell_{12}^{+-}+\ell_{12}^{++}-\overline{\ell_{12}^{+-}}+\overline{\ell_{12}^{-+}}+2\ell_{13}^{++}-\overline{\ell_{13}^{++}}+\overline{\ell_{13}^{-+}}+\ell_{23}^{++}-\ell_{23}^{--} - D_{1}, \ell_{12}^{-+}+\ell_{12}^{++}+\overline{\ell_{13}^{++}}+\overline{\ell_{13}^{-+}}-\ell_{23}^{++}-\ell_{23}^{--}+2\overline{\ell_{23}^{++}} - D_{1}, \overline{\ell_{12}^{++}}-2\ell_{12}^{++}-\overline{\ell_{12}^{-+}}-\ell_{13}^{++}+\ell_{13}^{-+}-2\overline{\ell_{13}^{-+}}+\ell_{23}^{++}+\ell_{23}^{--}-2\overline{\ell_{23}^{++}} + D_{1} \rangle\\ \cong (\Zbb/2\Zbb)^{3}$.
\\~\\
\indent We keep the notation as above. Now $\textup{Pic}\,\overline{U}$ is given by the following quotient group
$$ \textup{Pic}\,\overline{U} \cong \textup{Pic}\,\overline{W}/(\Zbb D_{1} \oplus \Zbb D_{2} \oplus \Zbb D_{3}) $$
by Proposition 3.1. Here for any divisor $D \in \textup{Pic}\,\overline{X}$, denote by $[D]$ its image in $\textup{Pic}\,\overline{U}$. By Proposition 3.1, we also have $\overline{\Qbb}^{\times} = \overline{\Qbb}[U]^{\times}$. By the Hochschild--Serre spectral sequence, we have the following injective homomorphism 
$$ \textup{Br}_{1}\,U/\textup{Br}_{0}\,U \cong \textup{H}^{1}(\Qbb,\textup{Pic}\,\overline{U}) $$
as $K$ is a number field. Since $\textup{Pic}\,\overline{U}$ is free and $\textup{Gal}(\overline{\Qbb}/K)$ acts on $\textup{Pic}\,\overline{U}$ trivially, we obtain that $\textup{H}^{1}(\Qbb,\textup{Pic}\,\overline{U}) \cong \textup{H}^{1}(G,\textup{Pic}\,\overline{U})$. With the action of $G$, we can compute in the quotient group $\textup{Pic}\,\overline{U}$:
\begin{equation*}
\begin{cases}
\textup{Ker}(1+\rho) = \langle [C_{1}^{+-}]-[C_{1}^{--}], [C_{2}^{+-}]-[C_{2}^{--}] \rangle,\\

\textup{Ker}(1-\rho) = \langle [\ell_{12}^{++}], [\ell_{12}^{+-}], [\ell_{13}^{++}], [\ell_{23}^{++}], [\ell_{12}^{-+}], [\ell_{13}^{-+}], [\ell_{23}^{--}], [\overline{\ell_{12}^{++}}], [\overline{\ell_{12}^{+-}}], [\overline{\ell_{13}^{++}}], [\overline{\ell_{23}^{++}}], [\overline{\ell_{12}^{-+}}], [\overline{\ell_{13}^{-+}}] \rangle, 
\end{cases}
\end{equation*}
$\textup{Ker}(1-\sigma) \cap \textup{Pic}\,\overline{U}^{\langle \rho \rangle} = \langle [\ell_{12}^{+-}]+[\ell_{12}^{++}]-[\overline{\ell_{12}^{+-}}]+[\overline{\ell_{12}^{-+}}]+2[\ell_{13}^{++}]-[\overline{\ell_{13}^{++}}]+[\overline{\ell_{13}^{-+}}]+[\ell_{23}^{++}]-[\ell_{23}^{--}], [\ell_{12}^{-+}]+[\ell_{12}^{++}]+[\overline{\ell_{13}^{++}}]+[\overline{\ell_{13}^{-+}}]-[\ell_{23}^{++}]-[\ell_{23}^{--}]+2[\overline{\ell_{23}^{++}}], [\overline{\ell_{12}^{++}}]-2[\ell_{12}^{++}]-[\overline{\ell_{12}^{-+}}]-[\ell_{13}^{++}]+[\ell_{13}^{-+}]-2[\overline{\ell_{13}^{-+}}]+[\ell_{23}^{++}]+[\ell_{23}^{--}]-2[\overline{\ell_{23}^{++}}] \rangle$,\\
and\\
$\textup{Ker}(1+\sigma+\sigma^{2}+\sigma^{3}) = \langle [\ell_{12}^{++}] - [\overline{\ell_{12}^{-+}}], [\ell_{12}^{+-}] + [\overline{\ell_{12}^{-+}}], [\ell_{12}^{-+}] + [\overline{\ell_{12}^{-+}}], [\overline{\ell_{12}^{++}}] + [\overline{\ell_{12}^{-+}}], [\overline{\ell_{12}^{+-}}] - [\overline{\ell_{12}^{-+}}], [\ell_{13}^{++}] - [\overline{\ell_{13}^{-+}}], [\ell_{13}^{-+}] + [\overline{\ell_{13}^{-+}}], [\overline{\ell_{13}^{++}}] + [\overline{\ell_{13}^{-+}}], [\ell_{23}^{++}] + [\overline{\ell_{23}^{++}}], [\ell_{23}^{--}] + [\overline{\ell_{23}^{++}}] \rangle$. Then 
$$ \textup{H}^{1}(\langle \rho \rangle, \textup{Pic}\,\overline{U}) = \frac{\textup{Ker}(1+\rho)}{(1-\rho)\textup{Pic}\,\overline{U}} = 0, $$
\[
\textup{H}^{1}(\langle \sigma \rangle, \textup{Pic}\,\overline{U}^{\langle \rho \rangle}) = \frac{\textup{Ker}(1+\sigma+\sigma^{2}+\sigma^{3}) \cap \textup{Pic}\,\overline{U}^{\langle \rho \rangle}}{(1-\sigma)\textup{Pic}\,\overline{U}^{\langle \rho \rangle}} = \frac{\langle [\ell_{12}^{++}]+[\overline{\ell_{12}^{++}}] \rangle}{2\langle [\ell_{12}^{++}]+[\overline{\ell_{12}^{++}}] \rangle} \cong \Zbb/2\Zbb,
\]
and
$$ \textup{H}^{1}(\langle \tau \rangle, \textup{Pic}\,\overline{U}^{\langle \sigma, \rho \rangle}) \cong \frac{\textup{Ker}(1+\tau) \cap \textup{Pic}\,\overline{U}^{\langle \sigma, \rho \rangle}}{(1-\tau)\textup{Pic}\,\overline{U}^{\langle \sigma, \rho \rangle}} = \frac{\textup{Pic}\,\overline{U}^{\langle \sigma, \rho \rangle}}{2\textup{Pic}\,\overline{U}^{\langle \sigma, \rho \rangle}} \cong (\Zbb/2\Zbb)^{3}. $$
Since $\textup{Pic}\,\overline{U}^{G} = 0$, from all the similar inflation-restriction exact sequences, we conclude that
$$ \textup{Br}_{1}\,U/\textup{Br}_{0}\,U \cong \textup{H}^{1}(G,\textup{Pic}\,\overline{U}) \cong (\Zbb/2\Zbb)^{4}. $$

Now we produce some concrete generators in $\textup{Br}_{1}\,U$ for $\textup{Br}_{1}\,U/\textup{Br}_{0}\,U$. The affine scheme $U \subset \Abb^{3}$ is defined over $\Qbb$ by the equation
\begin{equation}
x^{2} + y^{2} + z^{2} + 4(x^{2}y^{2} + y^{2}z^{2} + z^{2}x^{2}) - 16x^{2}y^{2}z^{2} = k.
\end{equation}
This affine equation is equivalent to
\begin{equation}
(4x^{2}+1)(4y^{2}+1)(4z^{2}+1) = (4k+1) + 128x^{2}y^{2}z^{2},
\end{equation}
\begin{equation}
(4x^{2}+1)(1+4y^{2}+4z^{2}-16y^{2}z^{2}) = (4k+1) - 32y^{2}z^{2},
\end{equation}
and also implies the following equation over $\{xyz \not= 0\}$:
\begin{equation}
(16x^{2}y^{2}-4x^{2}-4y^{2}-1)(16x^{2}z^{2}-4x^{2}-4z^{2}-1) = 2\left(\left(4x^{2} - \frac{4k-1}{4}\right)^{2} - \frac{(4k-5)^{2}-32}{16}\right),
\end{equation}
as well as similar ones obtained by permutation of coordinates in all the above equations. By Grothendieck's purity theorem, for any smooth variety $Y$ over a field $k$ of characteristic $0$, we have the exact sequence
$$ 0 \rightarrow \textup{Br}\,Y \rightarrow \textup{Br}\,k(Y) \rightarrow \oplus_{D \in Y^{(1)}} \textup{H}^{1}(k(D),\Qbb/\Zbb), $$
where the last map is given by the residue along the codimension-one point $D$. Hence, to prove that our quaternion algebras $\mathcal{A}_{1},\mathcal{A}_{2},\mathcal{B}$ come from a class in $\textup{Br}\,U$, it suffices to show that all their residues along the $15$ divisor classes $[\ell_{12}^{++}],\dots,[\ell_{13}^{-+}], [C_{1}^{+-}], [C_{2}^{+-}]$ generating $\textup{Pic}\,\overline{U}$ are trivial. However, in the function field of any such irreducible divisor, $-2(4k+1)$ or $(4k-5)^{2}-32$ is clearly a square; standard formulae for residues in terms of the tame symbol \cite{GS17}, Example 7.1.5, Proposition 7.5.1, therefore show that $\mathcal{A}_{1},\mathcal{A}_{2},\mathcal{B}$ are unramified, hence they are elements of $\textup{Br}\,U$ and moreover they are clearly algebraic. Since 
$$ \{4x^{2}+1 = 0\} \cap \{(4y^{2}+1)(4z^{2}+1)=0\}, $$
$$ \{4x^{2}+1 = 0\} \cap \{16y^{2}z^{2}-4y^{2}-4z^{2}-1=0\}, $$
$$ \{16x^{2}y^{2}-4x^{2}-4y^{2}-1=0\} \cap \{16x^{2}z^{2}-4x^{2}-4z^{2}-1=0\} $$
are closed subsets of codimension $2$ on $U$, we obtain that $$(4x^{2}+1,-2(4k+1)) = (2(4y^{2}+1)(4z^{2}+1),-2(4k+1)) = (2(16y^{2}z^{2}-4y^{2}-4z^{2}-1), 2(4k+1))$$ and $$(16x^{2}y^{2}-4x^{2}-4y^{2}-1, (4k-5)^{2}-32) = (2(16x^{2}z^{2}-4x^{2}-4z^{2}-1), (4k-5)^{2}-32)$$ in $\textup{Br}_{1}\,U$, as well as similar ones given by permutation of coordinates. Their residues at the irreducible divisors $D_{1}, D_{2}, D_{3}$ which form the complement of $U$ in $W$ are easily seen to be trivial. One thus also has $\mathcal{A}_{1},\mathcal{A}_{2},\mathcal{B} \in \textup{Br}_{1}\,W$. These elements, at least $\mathcal{A}_{1},\mathcal{A}_{2}$, are nontrivial, as they will contribute to the Brauer--Manin obstruction to the integral Hasse principle later.

In conclusion, we have $\textup{Br}_{1}\,W/\textup{Br}_{0}\,W \cong (\Zbb/2\Zbb)^{3}$, which can be viewed as a subgroup of $\textup{Br}_{1}\,U/\textup{Br}_{0}\,U \cong (\Zbb/2\Zbb)^{4}$.
\end{proof}

\begin{remark}
One can hope to find more explicit generators for the quotient group $\textup{Br}_{1}\,U/\textup{Br}_{0}\,U$ by studying further the equation and its geometric nature. Furthermore, it would be more interesting if one can compute the transcendental part of the Brauer group for this family of Markoff-type K3 surfaces like what the authors did for Markoff surfaces in \cite{LM20} and \cite{CTWX20}, which in general should be difficult. 
\end{remark}

\section{The Brauer--Manin obstruction}

\subsection{Review of the Brauer--Manin obstruction}
Here we briefly recall the Brauer--Manin obstruction in our setting, following \cite{Poo17}, Section 8.2 and \cite{CTX09}, Section 1. For each place $p$ of $\Qbb$ there is a pairing
$$ U(\Qpbb) \times \textup{Br}\,U \rightarrow \Qbb/\Zbb $$
coming from the local invariant map 
$$ \textup{inv}_{p} : \textup{Br}\,\Qpbb \rightarrow \Qbb/\Zbb $$ from local class field theory (this is an isomorphism if $p$ is a prime number). This pairing is locally constant on the left by \cite{Poo17}, Proposition 8.2.9. Any element $\mathcal{B} \in \textup{Br}\,U$ pairs trivially on $U(\Qpbb)$ for almost all $p$, thus taking the sum of the local pairings gives a pairing
$$ \prod_{p} U(\Qpbb) \times \textup{Br}\,U \rightarrow \Qbb/\Zbb. $$
This factors through the group $\textup{Br}\,U/\textup{Br}\,\Qbb$ and pairs trivially with the elements of $U(\Qbb)$. For $B \subseteq \textup{Br}\,U$, let $(\prod_{p} U(\Qpbb))^{B}$ be the left kernel of this pairing with respect to $B$. By Theorem 3.8, the group $\textup{Br}_{1}\,U/\textup{Br}\,\Qbb$ is generated by the algebra $\mathcal{A}$. Thus in our case, it suffices to consider the sequence of inclusions $U(\Qbb) \subseteq (\prod_{p} U(\Qpbb))^{\mathcal{A}} \subseteq \prod_{p} U(\Qpbb)$. In particular, if the latter inclusion is strict, then $\mathcal{A}$ gives an obstruction to \textit{weak approximation} on $U$.

For \textbf{integral points}, any element $\mathcal{B} \in \textup{Br}\,U$ pairs trivially on $\mathcal{U}(\Zpbb)$ for almost all $p$, so we obtain a pairing $U(\textbf{\textup{A}}_{\Qbb}) \times \textup{Br}\,U \rightarrow \Qbb/\Zbb$. As the local pairings are locally constant, we obtain a well-defined pairing (by abuse of notation, we write $\mathcal{U}(\textbf{\textup{A}}_{\Zbb})_{\bullet}$ the same as $\mathcal{U}(\textbf{\textup{A}}_{\Zbb})$): 
$$ \mathcal{U}(\textbf{\textup{A}}_{\Zbb}) \times \textup{Br}\,U \rightarrow \Qbb/\Zbb. $$
For $B \subseteq \textup{Br}\,U$, let $\mathcal{U}(\textbf{\textup{A}}_{\Zbb})^{B}$ be the left kernel with respect to $B$, and let $\mathcal{U}(\textbf{\textup{A}}_{\Zbb})^{\textup{Br}} = \mathcal{U}(\textbf{\textup{A}}_{\Zbb})^{\textup{Br}\,U}$. By Theorem 3.8, the map $\langle \mathcal{A} \rangle \rightarrow \textup{Br}_{1}\,U/\textup{Br}\,\Qbb$ is an isomorphism, hence $\mathcal{U}(\textbf{\textup{A}}_{\Zbb})^{\textup{Br}_{1}\,U} = \mathcal{U}(\textbf{\textup{A}}_{\Zbb})^{\mathcal{A}}$. We have the inclusions $\mathcal{U}(\Zbb) \subseteq \mathcal{U}(\textbf{\textup{A}}_{\Zbb})^{\mathcal{A}} \subseteq \mathcal{U}(\textbf{\textup{A}}_{\Zbb})$ so that $\mathcal{A}$ can obstruct the \textit{integral Hasse principle} or \textit{strong approximation}.

Let $V$ be dense Zariski open in $U$. As $U$ is smooth, the set $V(\Qpbb)$ is dense in $U(\Qpbb)$ for all places $p$. Moreover, $\mathcal{U}(\Zpbb)$ is open in $U(\Qpbb)$, hence $V(\Qpbb) \cap \mathcal{U}(\Zpbb)$ is dense in $\mathcal{U}(\Zpbb)$. As the local pairings are locally constant, we may restrict our attention to $V$ to calculate the local invariants of a given element in $\textup{Br}\,U$.

\subsection{Brauer--Manin obstruction from quaternion algebras}
Now we consider the \textbf{three} explicit families of Markoff-type K3 (MK3) surfaces over $\Qbb$ as introduced before. From now on, we always denote by $W_{k}$ the projective MK3 surfaces, $U_{k}$ the affine open subscheme defined by $W_{k} \setminus \{rst = 0\}$ and $\mathcal{U}_{k}$ the integral model of $U_{k}$ defined by the same equation.

\subsubsection{Existence of local points}
First of all, we study the existence of local integral points on the affine MK3 surfaces. It is interesting to note that there always exist $\Qbb$-points at infinity (when $rst = 0$) on these surfaces.

\begin{proposition}[Assumption I]
For $k \in \Zbb$, let $W_{k} \subset \Pbb^{1} \times \Pbb^{1} \times \Pbb^{1}$ be the MK3 surfaces defined over $\Qbb$ by the $(2,2,2)$-form 
\begin{equation}
    F_{1}(x,y,z) = x^{2} + y^{2} + z^{2} - 4x^{2}y^{2}z^{2} - k = 0.
\end{equation}
Let $\mathcal{U}_{k}$ be the integral model of $U_{k}$ defined over $\Zbb$ by the same equation. If $k$ satisfies the conditions:
\begin{enumerate}
    \item $k \equiv -1$ \textup{mod} $8$;
    \item $k \not \equiv 0$ \textup{mod} $3, 5, 7$,
\end{enumerate}
then $\mathcal{U}_{k}(\textbf{\textup{A}}_{\Zbb}) \not= \emptyset$.
\end{proposition}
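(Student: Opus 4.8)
The statement asserts everywhere local solubility, so the plan is to verify $U_k(\Rbb) \neq \emptyset$ and $\mathcal{U}_k(\Zpbb) \neq \emptyset$ for every prime $p$; at the finite primes I would in each case exhibit a point over the residue field at which some partial derivative of $F_1$ is nonzero and lift it by Hensel's lemma (in the form $|f(a)|_2 < |f'(a)|_2^2$ when $p = 2$). For $p = \infty$, restrict $F_1$ to the line $x = y = a$, where it equals $2a^{2} + (1 - 4a^{4})z^{2}$; for fixed $a > 2^{-1/2}$ this is a continuous function of $z \geq 0$ decreasing from $2a^{2}$ to $-\infty$, so choosing $a$ with $2a^{2} \geq k$ it takes the value $k$, giving a real point (there is no integrality condition at $\infty$). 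For $p = 2$ I would use $(1,1,1)$: since $F_1(1,1,1) = -1$, the hypothesis $k \equiv -1 \pmod 8$ forces $v_2(F_1(1,1,1) - k) \geq 3$, while $\partial_x F_1(1,1,1) = 2(1-4) = -6$ has $v_2 = 1$ and $3 > 2\cdot 1$, so Hensel lifts (solving for $x$, with $y = z = 1$) to a point of $\mathcal{U}_k(\Zbb_2)$. This is the only place where $k \equiv -1 \pmod 8$ is used — it is precisely the condition that $8 \mid F_1(1,1,1) - k$.

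For an odd prime $p$ with $p \nmid k$, I would use the slice $z = 0$, on which the equation becomes the conic $x^{2} + y^{2} = k$. A non-degenerate binary quadratic form over $\Fpbb$ represents every nonzero element, so there is $(x_0, y_0) \in \Fpbb^{2}$, not both zero, with $x_0^{2} + y_0^{2} \equiv k$; at $(x_0, y_0, 0)$ the partials $2x_0, 2y_0$ are not both zero, so the point is smooth and lifts. This handles $p = 3, 5, 7$ via the hypotheses $k \not\equiv 0 \pmod{3,5,7}$, and every odd prime not dividing $k$.

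The remaining — and main — difficulty is the odd primes $p$ with $p \mid k$, where the $z = 0$ slice degenerates to $x^{2} + y^{2} \equiv 0$: for $p \equiv 1 \pmod 4$ one still has the smooth solution $(1, \sqrt{-1}, 0)$, but for $p \equiv 3 \pmod 4$ this collapses to the singular point $(0,0,0)$, and a different argument is needed. By hypothesis such $p$ satisfy $p \geq 11$, and I would split them into two regimes. For $p$ above an explicit absolute bound $p_0$, I would invoke an effective Lang--Weil estimate for the affine surface $\tilde U_k / \Fpbb$; the preliminary observation is that $-(y^{2}+z^{2}-k)(1-2yz)(1+2yz)$ is square-free of positive degree for every $k$, so $\{F_1 = 0\}$ is geometrically irreducible over $\overline{\Fpbb}$ for all odd $p$ with implied constants independent of $k$, whence $\#\tilde U_k(\Fpbb) = p^{2} + O(p^{3/2})$, and after discarding the $O(1)$ singular points there remains a smooth $\Fpbb$-point to lift. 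For the finitely many primes $p \equiv 3 \pmod 4$ with $11 \leq p \leq p_0$, the reduction of the equation is the single surface $x^{2} + y^{2} + z^{2} = 4x^{2}y^{2}z^{2}$ (independent of which multiple of $p$ the integer $k$ is), and I would exhibit a smooth point by hand — for instance $(1,2,2)$ works modulo $11$, since $1 + 4 + 4 \equiv 4\cdot 1 \cdot 4 \cdot 4$ and $\partial_x F_1(1,2,2) = 2(1-64) \equiv 6 \not\equiv 0 \pmod{11}$. The care needed lies precisely here: making the Lang--Weil constants depend only on $\deg F_1$, checking that geometric irreducibility and the bound on the singular locus are uniform in $k$, and dispatching each small prime $p \equiv 3 \pmod 4$ below $p_0$ by an explicit smooth point.
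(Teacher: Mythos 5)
Your treatment of the real place, of $p=2$ via the point $(1,1,1)$ and Hensel's lemma, and of the odd primes $p \nmid k$ via the conic $x^{2}+y^{2}=k$ in the slice $z=0$ is essentially identical to the paper's proof (the hypotheses $k\equiv -1 \bmod 8$ and $k\not\equiv 0 \bmod 3,5,7$ are used in exactly the same way). The divergence, and the one genuine gap, is the final case $p \mid k$ (hence $p \ge 11$) with $p \equiv 3 \bmod 4$. Your plan there --- effective Lang--Weil for the affine surface over $\Fpbb$, plus an explicit smooth point for each prime $p\equiv 3 \bmod 4$ below the resulting threshold $p_{0}$ --- is sound in principle: the irreducibility criterion via square-freeness of $-(y^{2}+z^{2}-k)(1-2yz)(1+2yz)$ is correct, and for $p \mid k$ the singular locus of the reduction is just the origin. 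But the plan is not executed. You never fix $p_{0}$, and with off-the-shelf effective constants (e.g.\ Cafure--Matera, $|\#V(\Fpbb)-p^{2}|\le (d-1)(d-2)p^{3/2}+5d^{13/3}p$ with $d=6$) the threshold lands in the thousands, so ``dispatching each small prime by an explicit smooth point'' amounts to several hundred verifications of which you have done exactly one ($p=11$). As written, the proof is therefore incomplete at precisely the case requiring the most care.

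The paper avoids this entirely by passing to a curve rather than counting on the surface: for $p \mid k$ it slices with $z=1$, so that $F_{1}(x,y,1)=x^{2}+y^{2}-4x^{2}y^{2}+(1-k)=0$ has smooth projective closure of genus $1$ in $\Pbb^{1}\times\Pbb^{1}$, and the Hasse--Weil bound $(\sqrt{p}-1)^{2}>4$ for $p\ge 11$ leaves a smooth affine $\Fpbb$-point after discarding the at most $4$ points at infinity. This handles every relevant prime uniformly with no residual finite check. To repair your argument with minimal change, replace Lang--Weil on the surface by Weil on a single curve fiber (which is what the paper does); otherwise you must actually specify $p_{0}$ and carry out the finite verification. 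A minor point in your favour: your observation that $p\equiv 1 \bmod 4$, $p\mid k$ is still covered by the $z=0$ slice via $(1,\sqrt{-1},0)$ is correct and slightly more economical than the paper, which routes all $p\mid k$, $p\ge 11$ through the curve argument.
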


\begin{proof}
For the place at infinity, it is clear that there exist real solutions: If $k \geq 0$ then take $(x,y,z) = (\sqrt{k},0,0)$; if $k \leq -1$ then take $x = y = z \geq 1$ which satisfies $3x^{2} - 4x^{6} = k$ as the left hand side is a strictly decreasing continuous function of value $\leq -1$ on $[1,+\infty)$. For solutions at finite places $p$, with our specific conditions for $k$ in the assumption, we have:
\begin{enumerate}
	\item[(i)] Prime powers of $p = 2$: It is clear that every solution modulo $2$ is singular. Thanks to the condition (1), we find the non-singular solution $(1,1,1)$ modulo $8$, which then lifts to solutions modulo higher powers of $2$ by Hensel's lemma.
	
	\item[(ii)] Prime powers of $p \geq 3$: We need to find a non-singular solution modulo $p$ of the equations $F_{1} = 0$ which does \textbf{not} satisfy simultaneously
	$$ dF_{1}=0 : 2x(1-4y^{2}z^{2}) = 0, 2y(1-4z^{2}x^{2}) = 0, 2z(1-4x^{2}y^{2}) = 0. $$
	For simplicity, we will try to find a non-singular solution whose $z = 0$. First, it is clear that the equation $F_{1}=0$ always has a solution when $z = 0$: indeed, take $z=0$, then $F_{1}=0$ becomes $x^{2} + y^{2} = 0$, and every element in $\Fpbb$ can be expressed as the sum of two squares. Note that such a solution is singular if and only if $x = y =0$, which means that $p$ divides $k$. Hence, if $p$ does not divide $k$, then we can find a non-singular solution mod $p$ which lifts to higher powers of $p$ by Hensel's lemma. In particular, this is true for $p = 3, 5, 7$ thanks to the condition (2).
	\\~\\
	\indent Next, consider the case when $p \geq 11$ and $p$ divides $k$. We will find instead a non-singular solution whose $z=1$. The equation becomes
	$$ F_{1}(x,y,1) = x^{2} + y^{2} - 4x^{2}y^{2} + (1-k) = 0 $$
	which defines an affine curve $C \subset \Abb^{2}_{(x,y)}$ over $\Qbb$. If we consider its projective closure in $\Pbb^{2}_{[x:y:t]}$ defined by
	$$ t^{2}(x^{2}+y^{2}) - 4x^{2}y^{2} + (1-k)t^{4} = 0, $$
	then we can see that it has only two singularities which are \emph{ordinary} of multiplicity $2$, namely $[1:0:0]$ and $[0:1:0]$. By the genus--degree formula and the fact that the geometric genus is a birational invariant, we obtain
	$$ g(C) = \frac{(\deg C - 1)(\deg C - 2)}{2} - \sum_{i = 1}^{n} \frac{r_{i}(r_{i}-1)}{2}, $$
	where $n$ the number of ordinary singularities and $r_{i}$ is the multiplicity of each singularity for $i = 1,\dots,n$; in particular, $g(C) = 3 - 2 = 1$.
	
	Now we consider the original projective closure $C^{1} \in \Pbb^{1}_{[x:r]} \times \Pbb^{1}_{[y:s]}$ defined by
	$$ x^{2}s^{2} + y^{2}r^{2} - 4x^{2}y^{2} + (1-k)r^{2}s^{2} = 0. $$
	The projective curve $C^{1}$ is \textbf{smooth} over $\Fpbb$ under our assumption on $k$. Then by the Hasse--Weil bound for smooth, projective and geometrically integral curves of genus $1$, we have
	$$ |C^{1}(\Fpbb)| \geq p + 1 - 2\sqrt{p} = (\sqrt{p} - 1)^{2} > (3-1)^{2} = 4 $$
	since $p \geq 11$, so $|C^{1}(\Fpbb)| \geq 5$. As $C^{1}$ has exactly $4$ points at infinity (when $rs = 0$), the affine curve $C$ has at least one \emph{smooth} $\Fpbb$-point which then lifts to higher powers of $p$ by Hensel's lemma.
\end{enumerate}
\end{proof}

\begin{proposition}[Assumption II]
For $k \in \Zbb$, let $W_{k} \subset \Pbb^{1} \times \Pbb^{1} \times \Pbb^{1}$ be the MK3 surfaces defined over $\Qbb$ by the $(2,2,2)$-form 
\begin{equation}
    F_{2}(x,y,z) = x^{2} + y^{2} + z^{2} - 4(x^{2}y^{2} + y^{2}z^{2} + z^{2}x^{2}) + 16x^{2}y^{2}z^{2} - k = 0.
\end{equation}
Let $\mathcal{U}_{k}$ be the integral model of $U_{k}$ defined over $\Zbb$ by the same equation. If $k$ satisfies the conditions:
\begin{enumerate}
    \item $k \equiv 2$ \textup{mod} $8$, $k \equiv -9$ \textup{mod} $27$, $k \equiv -2$ \textup{mod} $5$, and $k \equiv 2$ \textup{mod} $7$;
    \item $p \equiv \pm 1$ \textup{mod} $8$ for any odd prime divisor $p$ of $k$,
\end{enumerate}
then $\mathcal{U}_{k}(\textbf{\textup{A}}_{\Zbb}) \not= \emptyset$.
\end{proposition}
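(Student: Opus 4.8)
The plan is to reduce, just as in the proof of Proposition~4.1 (Assumption~I), to producing for every place $v$ of $\Qbb$ a point of $\mathcal{U}_{k}(\Zbb_{v})$ (with $\Zbb_{\infty}=\Rbb$), since $\mathcal{U}_{k}(\textbf{\textup{A}}_{\Zbb})=\prod_{p}\mathcal{U}_{k}(\Zbb_{p})$ is nonempty exactly when every factor is. The observation that organizes the whole argument is that the affine equation $F_{2}=0$ is equivalent to
\[
(1-4x^{2})(1-4y^{2})(1-4z^{2}) = 1-4k,
\]
as one checks by expanding the left-hand side; this plays the role that the slice identity $x^{2}+y^{2}=k$ on $\{z=0\}$ played for $F_{1}$. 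As before, the real place and the primes $p\in\{2,3,5,7\}$ are handled by exhibiting explicit solutions (lifted through a suitable power of $p$), and for $p\geq 11$ one separates the cases $p\mid k$ and $p\nmid k$.

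At the real place, take $(\sqrt{k},0,0)$ when $k\geq 1$, and for $k\leq 0$ set $x=0$, $y=z$ and solve $(1-4y^{2})^{2}=1-4k$, with the real root $y=\tfrac12\sqrt{1+\sqrt{1-4k}}$. The delicate primes are $p=2$ and $p=3$: here, exactly as for $F_{1}$ at $p=2$, \emph{every} solution modulo $p$ is singular, because $\partial F_{2}/\partial x = 2x(1-4y^{2})(1-4z^{2})$ (and its cyclic analogues) vanishes identically modulo $2$ and vanishes modulo $3$ whenever $x\equiv y\equiv z\equiv 0$, which is forced since $9\mid k$. One therefore exhibits a solution modulo $8$ (resp.\ modulo $27$) --- this is exactly where the congruences $k\equiv 2\bmod 8$ and $k\equiv -9\bmod 27$ of condition~(1) are used --- at which the appropriate partial derivative has $p$-adic valuation $1$ while $F_{2}$ has valuation $\geq 3$, so the strong form of Hensel's lemma yields a $\Zbb_{p}$-point (e.g.\ $(1,1,2)$ at $p=2$, and a point $(3a,3b,0)$ with $a^{2}+b^{2}\equiv k/9 \bmod 9$ at $p=3$). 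For $p=5$ and $p=7$, the congruences $k\equiv -2\bmod 5$ and $k\equiv 2\bmod 7$ give a smooth solution modulo $p$ directly: in the reformulated equation $(1+x^{2})(1+y^{2})(1+z^{2})\equiv 4\bmod 5$ is solved by $(1,1,0)$, while modulo $7$ one has $1-4k\equiv 0$ with $2$ a square, so $(\sqrt{k},0,0)$ works.

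For $p\geq 11$ with $p\mid k$ one has $1-4k\equiv 1\bmod p$, and condition~(2), $p\equiv\pm1\bmod 8$, makes $2$ a square in $\Zbb_{p}$; then $x^{2}=y^{2}=\tfrac12$, $z=0$ gives $(1-4x^{2})(1-4y^{2})(1-4z^{2})=(-1)(-1)(1)=1$, and this point is smooth since $\partial F_{2}/\partial x = -2x\not\equiv 0$, so it lifts. For $p\geq 11$ with $p\nmid k$: if $p\mid 4k-1$, take $(\tfrac12,0,0)$, for which $1-4x^{2}\equiv 0\equiv 1-4k$ and $\partial F_{2}/\partial x = 1\neq 0$; otherwise the affine curve $\{z=0,\ x^{2}+y^{2}-4x^{2}y^{2}=k\}$ is smooth of genus $1$, its projective model in $\Pbb^{1}\times\Pbb^{1}$ being analyzed exactly as the curve $C^{1}$ in the proof of Proposition~4.1, so the Hasse--Weil bound gives $\geq p+1-2\sqrt{p}>4$ points over $\Fbb_{p}$, hence a smooth affine point --- which is automatically a smooth point of $F_{2}=0$, since the Jacobian condition for the curve coincides with the first two components of the Jacobian of $F_{2}$ --- and this lifts by Hensel.

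The step I expect to be the main obstacle is the treatment of $p=2$ and $p=3$: because the reduction is nowhere smooth there, one must first guess an explicit solution modulo $8$, resp.\ $27$, and then verify by hand the precise valuation inequality making the strong form of Hensel's lemma apply --- a short but unavoidable $p$-adic computation, and the one that dictates the exact moduli in condition~(1). A secondary point needing care is checking that the $(2,2)$-curve $x^{2}+y^{2}-4x^{2}y^{2}=k$ is geometrically integral of arithmetic genus $1$ and admits a smooth projective model over $\Fbb_{p}$ for all the primes in play (and controlling its points at infinity), so that the Hasse--Weil estimate genuinely delivers a smooth affine point; alternatively, at primes $p\mid k$ one could instead run the same curve argument on $(1-4x^{2})(1-4y^{2})=(4k-1)/3$ (the slice $z=1$), which would remove the use of condition~(2) there, so invoking~(2) merely keeps that case completely elementary.
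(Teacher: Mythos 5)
Your proposal matches the paper's proof in both structure and substance: the same product identity $(4x^{2}-1)(4y^{2}-1)(4z^{2}-1)=4k-1$, the same special points at $p=2,3,5,7$ (your $(3a,3b,0)$ at $p=3$ specializes to the paper's $(3,3,0)$), the same case split for $p\geq 11$ according to whether $p$ divides $k$ or $4k-1$, and the same genus-$1$ slice $\{z=0\}$ with the Hasse--Weil bound on its smooth $(2,2)$-model in $\Pbb^{1}\times\Pbb^{1}$ in the remaining case. The only deviations are cosmetic (a different explicit real point for $k\leq 0$, and the optional remark about the slice $z=1$), so the argument is correct and essentially identical to the paper's.
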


\begin{proof}
For the place at infinity, it is clear that there exist real solutions: If $k \geq 0$ then take $(x,y,z) = (\sqrt{k},0,0)$; if $k \leq -1$ then take $y = 1, z = 0$ and $x = \sqrt{\frac{1-k}{3}}$. For solutions at finite places $p$, with our specific conditions for $k$ in the assumption, we have:
\begin{enumerate}
	\item[(i)] Prime powers of $p = 2$: It is clear that every solution modulo $2$ is singular. Thanks to the condition (1), we find the non-singular solution $(1,1,2)$ modulo $8$, which then lifts to solutions modulo higher powers of $2$ by Hensel's lemma.
	
	\item[(ii)] Prime powers of $p = 3,5$: Thanks to the condition (1), we find the non-singular solutions $(3,3,0)$ modulo $27$ and $(1,1,0)$ modulo $5$, which lifts to higher powers of $3$ and $5$ by Hensel's lemma.
	
	\item[(iii)] Prime powers of $p \geq 7$: We need to find a non-singular solution modulo $p$ of the equations $F_{2} = 0$ which does \textbf{not} satisfy simultaneously 
	$$ dF_{2}=0 : 2x(1-4y^{2})(1-4z^{2}) = 0, 2y(1-4z^{2})(1-4x^{2}) = 0, 2z(1-4x^{2})(1-4y^{2}) = 0. $$
	First, note that the equation $F_{2}=0$ is equivalent to 
	$$ (4x^{2}-1)(4y^{2}-1)(4z^{2}-1) = 4k-1. $$
	We observe that there are two special cases: if $p$ divides $k$ then there exists a non-singular solution $(a,b,0)$ where $2a^{2} \equiv 2b^{2} \equiv 1$ mod $p$ thanks to the condition (2); if $p$ divides $4k-1$ then there clearly exists a non-singular solution $(\frac{1}{2},0,0)$. In particular, this is true for $p = 7$ thanks to the condition (1).	
	\\~\\
	\indent Next, consider the case when $p \geq 11$ and $p$ does not divide either $k$ or $4k-1$. For simplicity, we will try to find a non-singular solution whose $z = 0$. The equation becomes
	$$ F_{2}(x,y,0) = x^{2} + y^{2} - 4x^{2}y^{2} - k = 0 $$
	which defines an affine curve $C \subset \Abb^{2}_{(x,y)}$ over $\Qbb$. If we consider its projective closure in $\Pbb^{2}_{[x:y:t]}$ defined by
	$$ t^{2}(x^{2}+y^{2}) - 4x^{2}y^{2} - kt^{4} = 0, $$
	then we can see that it has only two singularities which are \emph{ordinary} of multiplicity $2$, namely $[1:0:0]$ and $[0:1:0]$. By the genus--degree formula and the fact that the geometric genus is a birational invariant, we obtain
	$$ g(C) = \frac{(\deg C - 1)(\deg C - 2)}{2} - \sum_{i = 1}^{n} \frac{r_{i}(r_{i}-1)}{2}, $$
	where $n$ the number of ordinary singularities and $r_{i}$ is the multiplicity of each singularity for $i = 1,\dots,n$; in particular, $g(C) = 3 - 2 = 1$.
	
	Now we consider the original projective closure $C^{1} \in \Pbb^{1}_{[x:r]} \times \Pbb^{1}_{[y:s]}$ defined by
	$$ x^{2}s^{2} + y^{2}r^{2} - 4x^{2}y^{2} - kr^{2}s^{2} = 0. $$
	The projective curve $C^{1}$ is \textbf{smooth} over $\Fpbb$ under our assumption on $k$. Then by the Hasse--Weil bound for smooth, projective and geometrically integral curves of genus $1$, we have
	$$ |C^{1}(\Fpbb)| \geq p + 1 - 2\sqrt{p} = (\sqrt{p} - 1)^{2} > (3-1)^{2} = 4 $$
	since $p \geq 11$, so $|C^{1}(\Fpbb)| \geq 5$. As $C^{1}$ has exactly $4$ points at infinity (when $rs = 0$), the affine curve $C$ has at least one \emph{smooth} $\Fpbb$-point which then lifts to higher powers of $p$ by Hensel's lemma.
\end{enumerate}
\end{proof}

\begin{proposition}[Assumption III]
For $k \in \Zbb$, let $W_{k} \subset \Pbb^{1} \times \Pbb^{1} \times \Pbb^{1}$ be the MK3 surfaces defined over $\Qbb$ by the $(2,2,2)$-form 
\begin{equation}
    F_{3}(x,y,z) = x^{2} + y^{2} + z^{2} + 4(x^{2}y^{2} + y^{2}z^{2} + z^{2}x^{2}) - 16x^{2}y^{2}z^{2} - k = 0.
\end{equation}
Let $\mathcal{U}_{k}$ be the integral model of $U_{k}$ defined over $\Zbb$ by the same equation. If $k$ satisfies the conditions:
\begin{enumerate}
    \item $k \equiv 1$ \textup{mod} $4$, $k \equiv 2$ \textup{mod} $3$, $k \equiv 3$ \textup{mod} $5$;
    \item $k \not\equiv 0,-2$ \textup{mod} $7$ and $k \not \equiv 0$ \textup{mod} $37$,
\end{enumerate}
then $\mathcal{U}_{k}(\textbf{\textup{A}}_{\Zbb}) \not= \emptyset$.
\end{proposition}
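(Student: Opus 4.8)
The plan is to follow, almost verbatim, the template of the two preceding propositions: exhibit a point of $U_{k}(\Rbb)$, then for each small prime $p\in\{2,3,5,7\}$ produce an explicit non-singular solution modulo a suitable prime power and lift it by Hensel's lemma, and finally handle every remaining prime $p\geq 11$ by slicing $W_{k}$ with a coordinate hyperplane and applying the Hasse--Weil bound to the resulting genus-$1$ curve (with a crude point count dealing with the few bad primes that resist a clean slicing). For the real place: if $k\geq 0$ take $(x,y,z)=(\sqrt{k},0,0)$; if $k\leq -1$ set $x=y=z=t$, so the equation reads $3t^{2}+12t^{4}-16t^{6}=k$, and since the left-hand side equals $-1$ at $t=1$, has derivative $6t(1+8t^{2}-16t^{4})<0$ for $t\geq 1$, and tends to $-\infty$, it decreases strictly from $-1$ to $-\infty$ on $[1,+\infty)$ and hence attains the value $k$.

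For the finite primes I would use the two equivalent forms of the defining equation
$$ (4x^{2}+1)(4y^{2}+1)(4z^{2}+1)=(4k+1)+128x^{2}y^{2}z^{2},\qquad (4x^{2}+1)(1+4y^{2}+4z^{2}-16y^{2}z^{2})=(4k+1)-32y^{2}z^{2}, $$
both immediate from $F_{3}=0$. Every solution mod $2$ is singular, so the first hypothesis on $k$ (in particular $k\equiv 1\bmod 4$) is what lets one write down a point modulo a high enough power of $2$ satisfying the $2$-adic Hensel inequality $v_{2}(F_{3})>2v_{2}(\partial F_{3})$; likewise $k\equiv 2\bmod 3$ and $k\equiv 3\bmod 5$ yield non-singular $\Fbb_{3}$- and $\Fbb_{5}$-points. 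For $p=7$, the second hypothesis gives $7\nmid k$, and, since $4k+1\equiv 0\bmod 7\Leftrightarrow k\equiv -2\bmod 7$, also $7\nmid(4k+1)$; then the affine surface is smooth over $\Fbb_{7}$ and a non-singular $\Fbb_{7}$-point on the slice $z=0$, where $F_{3}=x^{2}+y^{2}+4x^{2}y^{2}-k$, lifts by Hensel.

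For $p\geq 11$ the congruences on $k$ ensure that any prime $\geq 11$ dividing $k(4k+1)$ is different from $37$, and there are three cases. If $p\nmid k(4k+1)$, then the $(2,2)$-curve $C^{1}\subset\Pbb^{1}\times\Pbb^{1}$ given by $x^{2}s^{2}+y^{2}r^{2}+4x^{2}y^{2}-kr^{2}s^{2}=0$ (the closure of the slice $z=0$) is smooth of genus $1$; the Hasse--Weil bound gives $|C^{1}(\Fbb_{p})|\geq(\sqrt{p}-1)^{2}>4$, and as $C^{1}$ has at most $4$ points at infinity the affine curve has a smooth $\Fbb_{p}$-point, which (its $z$-partial vanishing there) is automatically a smooth point of the affine surface and therefore lifts. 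If $p\mid 4k+1$, a Jacobian-criterion computation shows the affine singular locus of $W_{k,\overline{\Fbb}_{p}}$ consists of at most the $12$ points $(0,\pm\tfrac{1}{2}\sqrt{-1},\pm\tfrac{1}{2}\sqrt{-1})$ and their permutations, so a crude point count on the irreducible surface $\{F_{3}=0\}\subset\Abb^{3}$ (which has $\sim p^{2}$ points) still produces a smooth $\Fbb_{p}$-point for all but finitely many $p$, those being checked directly. If $p\mid k$, then $p\geq 11$ and $p\neq 37$, the reduction is $W_{0,\Fbb_{p}}$, and I would slice by $z=1$: the projective curve $5(x^{2}s^{2}+y^{2}r^{2})-12x^{2}y^{2}+r^{2}s^{2}=0$ in $\Pbb^{1}\times\Pbb^{1}$ is smooth of genus $1$ for every $p\geq 11$ with $p\neq 37$ — the Jacobian criterion produces a singular point only when the integer $37$ occurring in that computation vanishes mod $p$, which is exactly why the second hypothesis demands $k\not\equiv 0\bmod 37$ — so Hasse--Weil again yields a smooth affine $\Fbb_{p}$-point that lifts.

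The main obstacle is precisely this treatment of the bad primes that can be arbitrarily large, namely the divisors of $k$ and of $4k+1$: for each such prime one has to verify either that the chosen coordinate slice is still a smooth genus-$1$ curve, so that Hasse--Weil applies and beats the at-most-$4$ points at infinity, or that the singular locus of the reduction stays uniformly small, so that crude point-counting on $\{F_{3}=0\}\subset\Abb^{3}$ survives. It is exactly this bookkeeping — keeping all the auxiliary curves smooth and all singular loci bounded at the awkward primes $2$, $7$ and $37$ — that pins down the otherwise rather opaque congruence hypotheses on $k$.
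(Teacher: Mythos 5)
Your overall strategy is exactly the paper's: a real point by monotonicity, explicit non-singular points plus Hensel at $p=2,3,5$, and for $p\geq 7$ a coordinate slice whose closure in $\Pbb^{1}\times\Pbb^{1}$ is a smooth genus-$1$ curve, so that Hasse--Weil beats the (at most four) points at infinity. The cases $p\nmid k(4k+1)$ (slice $z=0$) and $p\mid k$ (slice $z=1$, where the hypothesis $k\not\equiv 0\bmod 37$ enters) match the paper's proof. The one genuine gap is your treatment of $p\geq 11$ with $p\mid 4k+1$: there you abandon the slicing method for a ``crude point count'' on the surface in $\Abb^{3}$, which as written is not a proof. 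You assert without computation that the singular locus consists of at most the twelve points $(0,\pm\tfrac{1}{2}\sqrt{-1},\pm\tfrac{1}{2}\sqrt{-1})$ and permutations; you invoke $\sim p^{2}$ points on the reduction without establishing absolute irreducibility or making the Lang--Weil error term explicit; and you defer ``finitely many $p$'' to a direct check that is never performed and whose scope is never identified. The fix is the one the paper uses and that you already have in hand: the $z=1$ slice $5x^{2}+5y^{2}-12x^{2}y^{2}+(1-k)=0$ works for $p\mid 4k+1$ as well, since its affine singularities occur only when $k\equiv 1$ or $12k\equiv 37 \pmod p$, and neither congruence is compatible with $4k+1\equiv 0$ for $p\geq 11$.

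Two smaller points. First, your claim that the hypotheses force every prime $\geq 11$ dividing $k(4k+1)$ to differ from $37$ is false: only $37\nmid k$ is guaranteed, and $37\mid 4k+1$ is perfectly possible (e.g.\ $k\equiv 9\bmod 37$); this happens not to matter once the $z=1$ slice is used there, but you should not rely on the statement. Second, at $p=2,3,5$ you gesture at the existence of suitable non-singular points rather than exhibiting them; the paper writes them down ($(1,0,0)$ or $(1,2,0)$ modulo $8$ according to $k\bmod 8$, $(1,1,1)$ modulo $3$, $(1,2,1)$ modulo $5$), and since every solution modulo $2$ is singular the $2$-adic case genuinely requires producing a point modulo $8$ with $v_{2}(F_{3})>2v_{2}(\partial F_{3})$, not merely citing $k\equiv 1\bmod 4$.
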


\begin{proof}
For the place at infinity, it is clear that there exist real solutions: If $k \geq 0$ then take $(x,y,z) = (\sqrt{k},0,0)$; if $k \leq -1$ then take $x = y = z \geq 1$ which satisfies $3x^{2} + 12x^{4} - 16x^{6} = k$ as the left hand side is a strictly decreasing continuous function of value $\leq -1$ on $[1,+\infty)$. For solutions at finite places $p$, with our specific conditions for $k$ in the assumption, we have:
\begin{enumerate}
	\item[(i)] Prime powers of $p = 2$: It is clear that every solution modulo $2$ is singular. Thanks to the condition (1), we find the non-singular solutions $(1,0,0)$ mod $8$ if $k \equiv 1$ mod $8$ and $(1,2,0)$ mod $8$ if $k \equiv 5$ mod $8$, which then lift to solutions modulo higher powers of $2$ by Hensel's lemma.
	
	\item[(ii)] Prime powers of $p = 3, 5$: Thanks to the condition (1), we find the non-singular solutions $(1,1,1)$ for $p = 3$ if $k \equiv 2$ mod $3$ and $(1,2,1)$ for $p = 5$ if $k \equiv 3$ mod $5$, which then lift to solutions modulo higher powers of $3$ and $5$ by Hensel's lemma.
	
	\item[(iii)] Prime powers of $p \geq 7$: We need to find a non-singular solution modulo $p$ of the equations $F_{3} = 0$ which does \textbf{not} satisfy simultaneously 
	$$ dF_{3}=0 : 2x(1-4y^{2})(1-4z^{2}) = 0, 2y(1-4z^{2})(1-4x^{2}) = 0, 2z(1-4x^{2})(1-4y^{2}) = 0. $$
	For simplicity, we will try to find a non-singular solution whose $z = 0$. The equation becomes
	$$ F_{3}(x,y,0) = x^{2} + y^{2} + 4x^{2}y^{2} - k = 0 $$
	which defines an affine curve $C \subset \Abb^{2}_{(x,y)}$ over $\Qbb$. If we consider its projective closure in $\Pbb^{2}_{[x:y:t]}$ defined by
	$$ t^{2}(x^{2}+y^{2}) + 4x^{2}y^{2} - kt^{4} = 0, $$
	then we can see that it has only two singularities which are \emph{ordinary} of multiplicity $2$, namely $[1:0:0]$ and $[0:1:0]$. By the genus--degree formula and the fact that the geometric genus is a birational invariant, we obtain
	$$ g(C) = \frac{(\deg C - 1)(\deg C - 2)}{2} - \sum_{i = 1}^{n} \frac{r_{i}(r_{i}-1)}{2}, $$
	where $n$ the number of ordinary singularities and $r_{i}$ is the multiplicity of each singularity for $i = 1,\dots,n$; in particular, $g(C) = 3 - 2 = 1$.
	
	Now we consider the original projective closure $C^{1} \in \Pbb^{1}_{[x:r]} \times \Pbb^{1}_{[y:s]}$ defined by
	$$ x^{2}s^{2} + y^{2}r^{2} + 4x^{2}y^{2} - kr^{2}s^{2} = 0. $$
	If $p$ does not divide either $k$ or $4k+1$, then the projective curve $C^{1}$ is \textbf{smooth} over $\Fpbb$ under our assumption on $k$. Then by the Hasse--Weil bound for smooth, projective and geometrically integral curves of genus $1$, we have
	$$ |C^{1}(\Fpbb)| \geq p + 1 - 2\sqrt{p} = (\sqrt{p} - 1)^{2}, $$
	so $|C^{1}(\Fpbb)| \geq 3$ if $p = 7$ and $|C^{1}(\Fpbb)| \geq 5$ if $p \geq 11$. As $C^{1}$ has exactly $2$ and $4$ points at infinity (when $rs = 0$) if $p \equiv 3$ and $1$ mod $4$ respectively, the affine curve $C$ has at least one \emph{smooth} $\Fpbb$-point which then lifts to higher powers of $p$ by Hensel's lemma. In particular, this is true for $p = 7$ thanks to the condition (2).
	\\~\\
	\indent Next, consider the case when $p \geq 11$ and $p$ divides $k$ or $4k+1$. We will find instead a non-singular solution whose $z=1$. The equation becomes
	$$ F_{3}(x,y,1) = 5x^{2} + 5y^{2} - 12x^{2}y^{2} + (1-k) = 0 $$
	which defines an affine curve $D \subset \Abb^{2}_{(x,y)}$ over $\Qbb$. If we consider its projective closure in $\Pbb^{2}_{[x:y:t]}$ defined by
	$$ 5t^{2}(x^{2}+y^{2}) - 12x^{2}y^{2} + (1-k)t^{4} = 0, $$
	then we can see that it also has only two singularities which are \emph{ordinary} of multiplicity $2$, namely $[1:0:0]$ and $[0:1:0]$. By the genus--degree formula and the fact that the geometric genus is a birational invariant, we obtain
	$$ g(D) = \frac{(\deg D - 1)(\deg D - 2)}{2} - \sum_{i = 1}^{n} \frac{r_{i}(r_{i}-1)}{2}, $$
	where $n$ the number of ordinary singularities and $r_{i}$ is the multiplicity of each singularity for $i = 1,\dots,n$; in particular, $g(D) = 3 - 2 = 1$.
	
	Now we consider the original projective closure $D^{1} \in \Pbb^{1}_{[x:r]} \times \Pbb^{1}_{[y:s]}$ defined by
	$$ 5x^{2}s^{2} + 5y^{2}r^{2} - 12x^{2}y^{2} + (1-k)r^{2}s^{2} = 0. $$
	The projective curve $D^{1}$ is \textbf{smooth} over $\Fpbb$ under our assumption on $k$, especially the additional hypothesis $k \not \equiv 0$ mod $37$. Then by the Hasse--Weil bound for smooth, projective and geometrically integral curves of genus $1$, we have $|D^{1}(\Fpbb)| \geq 5$ since $p \geq 11$. As $D^{1}$ has at most $4$ points at infinity (when $rs = 0$), the affine curve $D$ has at least one \emph{smooth} $\Fpbb$-point which then lifts to higher powers of $p$ by Hensel's lemma. The proof is now complete.
\end{enumerate}
\end{proof}

\subsubsection{Integral Brauer--Manin obstructions}
It is important to recall that there always exist $\Qbb$-points (at infinity) on every member $W_{k} \subset \Pbb^{1} \times \Pbb^{1} \times \Pbb^{1}$ of each family of these Markoff-type K3 surfaces, hence they satisfy the (rational) Hasse principle. Now we prove the Brauer--Manin obstructions to the integral Hasse principle on the integral model $\mathcal{U}_{k}$ of the affine subscheme $U_{k} \subset W_{k}$ by calculating the local invariants for some quaternion algebra classes $\mathcal{A}$ in their Brauer groups:
$$ \textup{inv}_{p}\,\mathcal{A} : \mathcal{U}_{k}(\Zpbb) \rightarrow \Zbb/2\Zbb, \hspace{1cm} x \mapsto \textup{inv}_{p}\,\mathcal{A}(x). $$

\begin{theorem}
For $k \in \Zbb$, let $W_{k} \subset \Pbb^{1} \times \Pbb^{1} \times \Pbb^{1}$ be the MK3 surfaces defined over $\Qbb$ by the $(2,2,2)$-form 
\begin{equation}
    F_{1}(x,y,z) = x^{2} + y^{2} + z^{2} - 4x^{2}y^{2}z^{2} - k = 0.
\end{equation}
Let $\mathcal{U}_{k}$ be the integral model of $U_{k}$ defined over $\Zbb$ by the same equation. If $k$ satisfies the conditions:
\begin{enumerate}
    \item $k = -(1 + 16\ell^{2})$ where $\ell \in \Zbb$ such that $\ell$ is odd and $\ell \not\equiv \pm 2$ \textup{mod} $5$;
    \item $p \equiv 1$ \textup{mod} $4$ for any prime divisor $p$ of $\ell$,
\end{enumerate}
then there is an algebraic Brauer--Manin obstruction to the integral Hasse principle on $\mathcal{U}_{k}$ with respect to the element $\mathcal{A} = (4x^{2}y^{2}-1, k+1) = (4y^{2}z^{2}-1, k+1) = (4z^{2}x^{2}-1, k+1)$ in $\textup{Br}_{1}\,U_{k}/\textup{Br}_{0}\,U_{k}$. In other words, $\mathcal{U}_{k}(\Zbb) \subset \mathcal{U}_{k}(\textbf{\textup{A}}_{\Zbb})^{\mathcal{A}} = \emptyset$.
\end{theorem}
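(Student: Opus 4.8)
The plan is the standard recipe for an integral Brauer--Manin obstruction: first check that $\mathcal{A}$ really is an algebraic Brauer class on $U_k$, then recall that $\mathcal{U}_k$ has points everywhere locally, and finally compute the local invariants of $\mathcal{A}$ at every place and verify that their sum is the nonzero constant $1/2$.

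First I would simplify the class. Since $\ell$ is odd (so $\ell\neq 0$), $k+1=-16\ell^{2}=-(4\ell)^{2}$, hence $(f,k+1)=(f,-1)$ in $\textup{Br}\,\Qbb(U_k)$ for every $f$; thus $\mathcal{A}=(4x^{2}y^{2}-1,-1)$, and it is visibly algebraic since it dies over $\overline{\Qbb}$. The relation $F_1=0$, i.e. $4x^{2}y^{2}z^{2}=x^{2}+y^{2}+z^{2}-k$, gives the polynomial identity
\[
(4x^{2}y^{2}-1)(4y^{2}z^{2}-1)=4y^{4}-4ky^{2}+1=(2y^{2}+1)^{2}-(k+1)(2y)^{2},
\]
which exhibits $(4x^{2}y^{2}-1)(4y^{2}z^{2}-1)$ as a norm from $\Qbb(U_k)(\sqrt{k+1})$; with its cyclic permutations this shows the three quaternion algebras $(4x^{2}y^{2}-1,k+1)$, $(4y^{2}z^{2}-1,k+1)$, $(4z^{2}x^{2}-1,k+1)$ agree in $\textup{Br}\,\Qbb(U_k)$. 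To see $\mathcal{A}\in\textup{Br}\,U_k$ I would invoke Grothendieck purity: along any prime divisor of $U_k$ on which $4x^{2}y^{2}-1$ is a unit the residue of $(4x^{2}y^{2}-1,-1)$ is trivial, and on the remaining divisors one replaces $4x^{2}y^{2}-1$ by one of the other two representatives, which is a unit there because the common zero locus of any two of $4x^{2}y^{2}-1,\,4y^{2}z^{2}-1,\,4z^{2}x^{2}-1$ on $U_k$ has codimension $\geq 2$. This is the same mechanism used in the proof of Theorem 3.8.

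For local solubility I would check that the hypotheses on $\ell$ force the conditions of Proposition 4.1 (Assumption I) for $k=-(1+16\ell^{2})$: $\ell$ odd gives $k\equiv -1\bmod 8$; $k\not\equiv 0\bmod 3$ and $k\not\equiv 0\bmod 7$ hold for every $\ell$; and $k\not\equiv 0\bmod 5$ is precisely the condition $\ell\not\equiv\pm 2\bmod 5$. Hence $\mathcal{U}_k(\textbf{\textup{A}}_{\Zbb})\neq\emptyset$. The heart of the argument is then to show that $\textup{inv}_p\,\mathcal{A}$ is identically $1/2$ on $\mathcal{U}_k(\Zbb_2)$ and identically $0$ on $\mathcal{U}_k(\Zpbb)$ for every $p\neq 2$ (including $p=\infty$). \emph{At $p=\infty$:} for $(x,y,z)\in\mathcal{U}_k(\Rbb)$ we have $k=x^{2}+y^{2}+z^{2}(1-4x^{2}y^{2})$, so $4x^{2}y^{2}\leq 1$ would force $k\geq 0$, contradicting $k<0$; thus $4x^{2}y^{2}-1>0$ and $(4x^{2}y^{2}-1,-1)_{\infty}=0$. \emph{At $p=2$:} for $x,y\in\Zbb_2$, $4x^{2}y^{2}-1\equiv 3\bmod 4$ is a $2$-adic unit which is not a norm from $\Qbb_2(\sqrt{-1})$, so $(4x^{2}y^{2}-1,-1)_2=1/2$. \emph{At odd $p\equiv 1\bmod 4$:} $-1$ is a square in $\Qpbb$, so $\mathcal{A}$ splits and $\textup{inv}_p\,\mathcal{A}=0$; by hypothesis (2) this handles every prime dividing $\ell$. \emph{At odd $p\equiv 3\bmod 4$} (hence $p\nmid\ell$): $\Qpbb(\sqrt{-1})/\Qpbb$ is unramified, so $(f,-1)_p$ is trivial whenever $v_p(f)$ is even, and it suffices to show $p\nmid 4x^{2}y^{2}-1$ for $(x,y,z)\in\mathcal{U}_k(\Zpbb)$. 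If $p\mid 4x^{2}y^{2}-1$, then $p$ divides $(4x^{2}y^{2}-1)(4y^{2}z^{2}-1)=(2y^{2}+1)^{2}+(8\ell y)^{2}$ (using $k+1=-16\ell^{2}$); since $p\equiv 3\bmod 4$ this forces $p\mid 2y^{2}+1$ and $p\mid 8\ell y$, and $p\nmid 8\ell$ then gives $p\mid y$, hence $p\mid 1$, a contradiction.

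Combining these, $\sum_p \textup{inv}_p\,\mathcal{A}(P_p)=1/2\neq 0$ for every $(P_p)\in\mathcal{U}_k(\textbf{\textup{A}}_{\Zbb})$, so $\mathcal{U}_k(\textbf{\textup{A}}_{\Zbb})^{\mathcal{A}}=\emptyset$; since $\mathcal{U}_k(\Zbb)\subseteq\mathcal{U}_k(\textbf{\textup{A}}_{\Zbb})^{\mathcal{A}}$ by reciprocity and $\mathcal{U}_k(\textbf{\textup{A}}_{\Zbb})\neq\emptyset$, this is the asserted obstruction (and incidentally shows $\mathcal{A}$ is nontrivial in $\textup{Br}_1\,U_k/\textup{Br}_0\,U_k$). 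I expect the delicate points to be (a) the purity check that $\mathcal{A}$ really extends to $\textup{Br}\,U_k$, i.e. that the three representatives genuinely glue across the divisors where each individually ramifies, and (b) getting the Hilbert-symbol and square-class bookkeeping at $p=2$ and $p=\infty$ exactly right; the case $p\equiv 3\bmod 4$ is short but is the one place where the arithmetic hypotheses on $\ell$ (oddness, and $p\equiv 1\bmod 4$ for $p\mid\ell$) are genuinely used.
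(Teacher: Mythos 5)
Your proposal is correct and follows essentially the same route as the paper: the same quaternion algebra $\mathcal{A}$, the same key identity $(4x^{2}y^{2}-1)(4y^{2}z^{2}-1)=(2y^{2}+1)^{2}-4(k+1)y^{2}$, and the same place-by-place invariant computation showing the sum is identically $\tfrac{1}{2}$. Your write-up is in fact a bit more complete than the paper's (which leaves the purity check that $\mathcal{A}\in\textup{Br}\,U_{k}$ and the reduction to Proposition 4.1 for local solubility implicit), and your treatment of the odd primes via the dichotomy $p\equiv 1$ or $3 \bmod 4$, after replacing $k+1$ by $-1$ using $k+1=-(4\ell)^{2}$, is equivalent to the paper's case analysis on whether $p$ divides $k+1$ or $4x^{2}y^{2}-1$.
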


\begin{proof}
For any local point in $\mathcal{U}_{k}(\textbf{A}_{\Zbb})$, we calculate its local invariants at every prime $p \leq \infty$. First of all, note that $U_{k}$ is smooth over $\Qbb$ and the affine equation implies
$$ (4x^{2}y^{2}-1)(4y^{2}z^{2}-1) = (2y^{2}+1)^{2} - 4(k+1)y^{2}. $$ 
Therefore, we obtain the equality $$\mathcal{A} = (4x^{2}y^{2}-1, k+1) = (4y^{2}z^{2}-1, k+1) = (4z^{2}x^{2}-1, k+1)$$ in $\textup{Br}_{1}\,U_{k}/\textup{Br}_{0}\,U_{k}$. Now by abuse of notation, at each place $p$ we consider a local point denoted by $(x,y,z)$.

At $p = \infty$: From the equation $z^{2}(4x^{2}y^{2}-1) = x^{2}+y^{2}-k > 0$ for all $x,y,z \in \Rbb$ since $k \leq -1$ by our assumption, so $4x^{2}y^{2}-1 > 0$ for every point $(x,y,z) \in U_{k}(\Rbb)$. Hence we have $\textup{inv}_{\infty}\,\mathcal{A}(x,y,z) = 0$.

At $p = 2$: Since $k \equiv -1$ mod $8$, all the coordinates $x,y,z$ are in $\Zbb_{2}^{\times}$, then $4x^{2}y^{2} - 1 \equiv 3$ mod $8$ so $\textup{inv}_{2}\,\mathcal{A}(x,y,z) = (4x^{2}y^{2}-1,k+1)_{2} = (3,-1)_{2} = \frac{1}{2}$.

At $p \geq 3$: Since $k+1 = -16\ell^{2}$ and every odd prime divisor $p$ of it satisfies $(-1,p)_{p} = 0$, if $p$ divides $k+1$ then $\textup{inv}_{p}\,\mathcal{A}(x,y,z) = 0$. Otherwise, if $p$ divides $4x^{2}y^{2}-1$ then $p$ cannot divide $y$ and so by the above equation we have $k+1 \in \Zpbb^{\times 2}$, which implies that $(4x^{2}y^{2}-1,k+1)_{p} = 0$. Finally, if $4x^{2}y^{2}-1$ and $k+1$ are both in $\Zpbb^{\times}$ then the local invariant is trivial as well.

In conclusion, we have 
$$ \sum_{p \leq \infty} \textup{inv}_{p}\,\mathcal{A}(x,y,z) = \frac{1}{2} \not= 0, $$
so $\mathcal{U}_{k}(\Zbb) \subset \mathcal{U}_{k}(\textbf{\textup{A}}_{\Zbb})^{\mathcal{A}} = \emptyset$.
\end{proof}

\begin{theorem}
For $k \in \Zbb$, let $W_{k} \subset \Pbb^{1} \times \Pbb^{1} \times \Pbb^{1}$ be the MK3 surfaces defined over $\Qbb$ by the $(2,2,2)$-form 
\begin{equation}
    F_{2}(x,y,z) = x^{2} + y^{2} + z^{2} - 4(x^{2}y^{2} + y^{2}z^{2} + z^{2}x^{2}) + 16x^{2}y^{2}z^{2} - k = 0.
\end{equation}
Let $\mathcal{U}_{k}$ be the integral model of $U_{k}$ defined over $\Zbb$ by the same equation. If $k$ satisfies the conditions:
\begin{enumerate}
    \item $k = 18\ell^{2}$ where $\ell \in \Zbb$ such that $\ell \not\equiv 0$ \textup{mod} $2,3$, $\ell \equiv 1$ \textup{mod} $5$, and $\ell \equiv 2$ \textup{mod} $7$;
    \item $p \equiv \pm 1$ \textup{mod} $8$ for any prime divisor $p$ of $\ell$,
\end{enumerate}
then there is an algebraic Brauer--Manin obstruction to the integral Hasse principle on $\mathcal{U}_{k}$ with respect to the subgroup $A \subset \textup{Br}_{1}\,U_{k}/\textup{Br}_{0}\,U_{k}$ generated by the elements $\mathcal{A}_{1} = (4x^{2}-1, k)$ and $\mathcal{A}_{2} = (4y^{2}-1, k)$, i.e., $\mathcal{U}_{k}(\Zbb) \subset \mathcal{U}_{k}(\textbf{\textup{A}}_{\Zbb})^{A} = \emptyset$.
\end{theorem}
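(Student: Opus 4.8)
The plan is to exhibit the algebraic Brauer--Manin obstruction carried by $A = \langle \mathcal{A}_1, \mathcal{A}_2 \rangle$ by computing, for an arbitrary adelic point, the local invariants of $\mathcal{A}_1$ and $\mathcal{A}_2$ at every place of $\Qbb$. First I would observe that under the stated hypotheses $k = 18\ell^2$ satisfies all the congruence conditions of Proposition~(Assumption~II) --- for instance $18\ell^2 \equiv 18 \equiv -9 \pmod{27}$ since $3 \nmid \ell$, $18\ell^2 \equiv 3\ell^2 \equiv 3 \equiv -2 \pmod 5$ since $\ell \equiv 1 \pmod 5$, and $18\ell^2 \equiv 4\ell^2 \equiv 2 \pmod 7$ since $\ell \equiv 2 \pmod 7$, while the hypothesis on the prime divisors of $\ell$ handles the odd primes $\geq 7$ dividing $k$ --- so that $\mathcal{U}_k(\textbf{\textup{A}}_{\Zbb}) \neq \emptyset$ and the obstruction is genuine.

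Next I would record the algebraic input. On the affine surface $U_k$ the equation $F_2 = 0$ is equivalent to
\[ (4x^2-1)(4y^2-1)(4z^2-1) = 4k-1, \]
and since $4k-1 = 72\ell^2 - 1$ is a nonzero constant, each of $4x^2-1$, $4y^2-1$, $4z^2-1$ is a unit in $\mathcal{O}(U_k)$. Hence $\mathcal{A}_1 = (4x^2-1,k)$, $\mathcal{A}_2 = (4y^2-1,k)$ and $\mathcal{A}_3 = (4z^2-1,k)$ are symbols of a unit with a constant, so by Grothendieck's purity theorem they are unramified on $U_k$, lie in $\textup{Br}_1\,U_k$ and are algebraic; they are permuted by the coordinate permutations of $U_k$, and since $\mathcal{A}_1 \mathcal{A}_2 \mathcal{A}_3 = (4k-1,k)$ is constant we get $\mathcal{A}_3 \equiv \mathcal{A}_1 + \mathcal{A}_2$ in $\textup{Br}_1\,U_k/\textup{Br}_0\,U_k$. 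Thus the nontrivial classes of $A$ are represented by $\mathcal{A}_1, \mathcal{A}_2, \mathcal{A}_3$, and it is enough to show that no $(P_v) \in \mathcal{U}_k(\textbf{\textup{A}}_{\Zbb})$ satisfies both $\sum_v \textup{inv}_v\,\mathcal{A}_1(P_v) = 0$ and $\sum_v \textup{inv}_v\,\mathcal{A}_2(P_v) = 0$.

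The heart of the argument is the local computation. \emph{At $v = \infty$}: since $k = 18\ell^2 > 0$, the real Hilbert symbol $(4x^2-1,k)_\infty$ vanishes. \emph{At an odd prime $p$}: if $p \nmid 4k-1$ then all three factors $4x_i^2-1$ are $p$-adic units and, $k = 2(3\ell)^2$ having even $p$-adic valuation for every odd prime, $(4x_i^2-1,k)_p = 0$; if $p \mid 4k-1$ then $4k \equiv 1 \pmod p$, so $k \equiv 4^{-1} \pmod p$ is a square in $\Zpbb^\times$ and $(4x_i^2-1,k)_p = 0$ as well. Hence $\infty$ and every odd place contribute $0$. \emph{At $p = 2$}: as $\ell$ is odd one has $k \equiv 2 \pmod{16}$, so $k = 2u$ with $u = 9\ell^2 \equiv 1 \pmod 8$ a square in $\mathbb{Z}_2^\times$, and $4k-1 \equiv 7 \pmod{16}$; for a coordinate $x_i$ of $P_2$ one finds $4x_i^2-1 \equiv 3 \pmod{16}$ if $x_i \in \mathbb{Z}_2^\times$ and $4x_i^2-1 \equiv -1 \pmod{16}$ if $x_i \in 2\mathbb{Z}_2$, whence $\textup{inv}_2\,\mathcal{A}_i = (4x_i^2-1,2)_2$ equals $\tfrac12$ in the first case and $0$ in the second; and matching the product $(4x^2-1)(4y^2-1)(4z^2-1) \equiv 7 \pmod{16}$ against the two possible residues $3$ and $-1$ of each factor forces exactly one of the three coordinates of $P_2$ to lie in $2\mathbb{Z}_2$.

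Putting this together, $\sum_v \textup{inv}_v\,\mathcal{A}_i(P_v) = \textup{inv}_2\,\mathcal{A}_i(P_2)$ for $i = 1,2$, and since at most one of the first two coordinates of $P_2$ can be the unique even one, at least one of these sums equals $\tfrac12$; hence $\mathcal{U}_k(\textbf{\textup{A}}_{\Zbb})^A = \emptyset$, and a fortiori $\mathcal{U}_k(\Zbb) = \emptyset$. The one genuinely delicate point I expect is the vanishing at odd primes: it works exactly because $4k-1 = 72\ell^2-1 = 2(6\ell)^2-1$ forces $k$ to be a square modulo every prime divisor of $4k-1$ (and to have even valuation at every odd prime), which is precisely what kills all odd-place contributions with no further hypothesis on the factorisation of $4k-1$; the $2$-adic parity bookkeeping is then a routine, if careful, finite computation.
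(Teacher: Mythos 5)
Your proposal is correct and follows essentially the same route as the paper: the identity $(4x^2-1)(4y^2-1)(4z^2-1)=4k-1$, vanishing of all local invariants of $\mathcal{A}_1,\mathcal{A}_2$ at $\infty$ and at odd primes (using that $k$ has even valuation there and is a square modulo any $p\mid 4k-1$), and the $2$-adic analysis showing exactly one coordinate is even, so at least one of the two symbols has total invariant $\tfrac12$. Your justification that the $\mathcal{A}_i$ lie in $\textup{Br}_1\,U_k$ (unit times constant) and your explicit verification of the Assumption~II congruences are slightly more careful than the paper's, but the argument is the same.
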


\begin{proof}
For any local point in $\mathcal{U}_{k}(\textbf{A}_{\Zbb})$, we calculate its local invariants at every prime $p \leq \infty$. First of all, note that $U_{k}$ is smooth over $\Qbb$ and the affine equation implies 
$$ (4x^{2}-1)(4y^{2}-1)(4z^{2}-1) = 4k-1. $$ 
Therefore, we obtain the equality $$(4x^{2}-1, k) + (4y^{2}-1, k) + (4z^{2}-1, k) = 0$$ in $\textup{Br}_{1}\,U_{k}/\textup{Br}_{0}\,U_{k}$. Now by abuse of notation, at each place $p$ we consider a local point denoted by $(x,y,z)$.

At $p = \infty$: For any $\ell \in \Rbb$, we always have $k = 18\ell^{2} > 0$, hence $\textup{inv}_{\infty}\,\mathcal{A}(x,y,z) = 0$.

At $p = 2$: Since $k \equiv 2$ mod $8$, exactly two of the coordinates $x,y,z$ are in $\Zbb_{2}^{\times}$, so without loss of generality let one of them be $x$, then $4x^{2} - 1 \equiv 3$ mod $8$ so $\textup{inv}_{2}\,\mathcal{A}(x,y,z) = (4x^{2}-1,k)_{2} = (3,2)_{2} = \frac{1}{2}$.

At $p = 3$: Since $k = 18\ell^{2}$, all of the coordinates $x,y,z$ must be divisible by $3$, so $\textup{inv}_{3}\,\mathcal{A}(x,y,z) = (-1,18)_{3} = 0$. 

At $p \geq 5$: Since $k = 18\ell^{2}$ and every odd prime divisor $p \not= 3$ satisfies $(2,p)_{p} = 0$, if $p$ divides $k$ then $\textup{inv}_{p}\,\mathcal{A}(x,y,z) = 0$. Otherwise, if $p$ divides $4x^{2}-1$ then by the above equation we have $k \in \Zpbb^{\times 2}$, which implies that $(4x^{2}-1,k)_{p} = 0$. Finally, if $4x^{2}-1$ and $k$ are both in $\Zpbb^{\times}$ then the local invariant is trivial as well.

In conclusion, we have 
$$ \sum_{p \leq \infty} \textup{inv}_{p}\,\mathcal{A}(x,y,z) = \frac{1}{2} \not= 0, $$
so $\mathcal{U}_{k}(\Zbb) \subset \mathcal{U}_{k}(\textbf{\textup{A}}_{\Zbb})^{\mathcal{A}} = \emptyset$.
\end{proof}

\begin{theorem}
For $k \in \Zbb$, let $W_{k} \subset \Pbb^{1} \times \Pbb^{1} \times \Pbb^{1}$ be the MK3 surfaces defined over $\Qbb$ by the $(2,2,2)$-form 
\begin{equation}
    F_{3}(x,y,z) = x^{2} + y^{2} + z^{2} + 4(x^{2}y^{2} + y^{2}z^{2} + z^{2}x^{2}) - 16x^{2}y^{2}z^{2} - k = 0.
\end{equation}
Let $\mathcal{U}_{k}$ be the integral model of $U_{k}$ defined over $\Zbb$ by the same equation. If $k$ satisfies the conditions:
\begin{enumerate}
    \item $k = -\frac{1}{4}(1 + 27\ell^{2})$ where $\ell \in \Zbb$ such that $\ell \equiv \pm 1$ \textup{mod} $8$, $\ell \equiv 1$ \textup{mod} $5$, $\ell \equiv 3$ \textup{mod} $7$, and $\ell \not\equiv \pm 10$ \textup{mod} $37$;
    \item $p \equiv \pm 1$ \textup{mod} $24$ for any prime divisor $p$ of $\ell$,
\end{enumerate}
then there is an algebraic Brauer--Manin obstruction to the integral Hasse principle on $\mathcal{U}_{k}$ with respect to the subgroup $A \subset \textup{Br}_{1}\,U_{k}/\textup{Br}_{0}\,U_{k}$ generated by the elements $\mathcal{A}_{1} = (4x^{2}+1, -2(4k+1))$ and $\mathcal{A}_{2} = (4y^{2}+1, -2(4k+1))$, i.e., $\mathcal{U}_{k}(\Zbb) \subset \mathcal{U}_{k}(\textbf{\textup{A}}_{\Zbb})^{A} = \emptyset$.
\end{theorem}

\begin{proof}
For any local point in $\mathcal{U}_{k}(\textbf{A}_{\Zbb})$, we calculate its local invariants at every prime $p \leq \infty$. First of all, note that $U_{k}$ is smooth over $\Qbb$ and the affine equation implies 
$$ (4x^{2}+1)(4y^{2}+1)(4z^{2}+1) = (4k+1) + 128x^{2}y^{2}z^{2}. $$ 
Therefore, we obtain the equality $$(4x^{2}+1, -2(4k+1)) + (4y^{2}+1, -2(4k+1)) + (4z^{2}+1, -2(4k+1)) = 0$$ in $\textup{Br}_{1}\,U_{k}/\textup{Br}_{0}\,U_{k}$. Now by abuse of notation, at each place $p$ we consider a local point denoted by $(x,y,z)$.

At $p = \infty$: For any $x \in \Rbb$, we always have $4x^{2}+1 > 0$, hence $\textup{inv}_{\infty}\,\mathcal{A}(x,y,z) = 0$.

At $p = 2$: Since $k \equiv 1$ mod $4$, exactly two of the coordinates $x,y,z$ are in $2\Zbb_{2}$, so without loss of generality let one of them be $x$, then $4x^{2} + 1 \equiv 1$ mod $8$ so $\textup{inv}_{2}\,\mathcal{A}(x,y,z) = (4x^{2}+1,-2(4k+1))_{2} = 0$.

At $p = 3$: Since $k \equiv 2$ mod $3$, all of the coordinates $x,y,z$ are in $\Zbb_{3}^{\times}$, so $\textup{inv}_{3}\,\mathcal{A}(x,y,z) = (2,54\ell^{2})_{3} = (-1,3)_{3} = \frac{1}{2}$. 

At $p \geq 5$: Since $-2(4k+1) = 54\ell^{2}$ and every odd prime divisor $p \not= 3$ satisfies $(6,p)_{p} = 0$, if $p$ divides $4k+1$ then $\textup{inv}_{p}\,\mathcal{A}(x,y,z) = 0$. Otherwise, if $p$ divides $4x^{2}+1$ then by the above equation we have $-2(4k+1) \in \Zpbb^{\times 2}$, which implies that $(4x^{2}+1,-2(4k+1))_{p} = 0$. Finally, if $4x^{2}+1$ and $4k+1$ are both in $\Zpbb^{\times}$ then the local invariant is trivial as well.

In conclusion, we have 
$$ \sum_{p \leq \infty} \textup{inv}_{p}\,\mathcal{A}(x,y,z) = \frac{1}{2} \not= 0, $$
so $\mathcal{U}_{k}(\Zbb) \subset \mathcal{U}_{k}(\textbf{\textup{A}}_{\Zbb})^{\mathcal{A}} = \emptyset$.
\end{proof}

\begin{example}
We give some explicit counterexamples to the integral Hasse principle for the three families of Markoff-type K3 surfaces that we have discussed. Note that in theory, there always exist \textbf{primes} $\ell$ which satisfy all the hypotheses for each family, thanks to the well-known Dirichlet's theorem on arithmetic progressions.
\begin{enumerate}
	\item[(1)] For $\ell = 1$, we have
	$$ x^{2} + y^{2} + z^{2} - 4x^{2}y^{2}z^{2} = -(1+16.1^{2}) = -17. $$
	
	\item[(2)] For $\ell = 191$, we have
	$$ x^{2} + y^{2} + z^{2} - 4(x^{2}y^{2} + z^{2}x^{2} + x^{2}y^{2}) + 16x^{2}y^{2}z^{2} = 18.191^2 = 656658. $$
	
	\item[(3)] For $\ell = 241$, we have
	$$ x^{2} + y^{2} + z^{2} + 4(x^{2}y^{2} + z^{2}x^{2} + x^{2}y^{2}) - 16x^{2}y^{2}z^{2} = -\frac{1}{4}(1+27.241^{2}) = -392047. $$
\end{enumerate}
\end{example}

\begin{remark}
Note that in our first and third cases, we need $k$ to be negative. In fact, if $k = 0$ then we always have the trivial solution $(0,0,0)$. And if $k \geq 0$ and $k$ satisfies our assumption for each family of those Markoff-type K3 surfaces, then we can prove the nonexistence of integral points via elementary arguments.
\end{remark}

\subsection{Counting the Hasse failures}
In this part, we calculate the number of examples of existence for local integral points as well as the number of counterexamples to the integral Hasse principle for our Markoff-type K3 surfaces which can be explained by the Brauer--Manin obstruction. More precisely, we compute the natural density of $k \in \Zbb$ satisfying the hypotheses in Assumptions I, II, III and the three main Theorems about the Brauer--Manin obstruction.

\begin{theorem}
For the above three families of MK3 surfaces, we have
	$$ \# \{k \in \Zbb: |k| \leq M,\ \mathcal{U}_{k}(\textbf{\textup{A}}_{\Zbb}) \not= \emptyset \} \asymp M $$
	and
	$$ \# \{k \in \Zbb: |k| \leq M,\ \mathcal{U}_{k}(\textbf{\textup{A}}_{\Zbb}) \not= \emptyset,\ \mathcal{U}_{k}(\textbf{\textup{A}}_{\Zbb})^{\textup{Br}} = \emptyset \} \gg \frac{M^{1/2}}{\textup{log}\,M}, $$
as $M \rightarrow +\infty$.
\end{theorem}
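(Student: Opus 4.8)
The plan is to handle the two estimates separately. The first is elementary: the upper bound holds because there are at most $2M+1$ integers $k$ with $|k|\le M$, and for the lower bound it suffices to exhibit a set of $k$ of positive natural density with $\mathcal{U}_{k}(\textbf{\textup{A}}_{\Zbb})\ne\emptyset$. For the first family this is immediate from Assumption I, since every $k$ in a suitable finite union of residue classes modulo $8\cdot3\cdot5\cdot7$ has everywhere-local integral points; for the second and third families one argues the same way from Assumptions II and III, noting that the auxiliary hypotheses there on the prime divisors of $k$ (resp.\ of $4k+1$) are needed only to pin down the particular local points exhibited in those proofs and may be traded for a slightly less clean search for a smooth $\Fbb_{p}$-point still covering a positive-density set of $k$. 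Hence the first count is $\asymp M$ for each of the three families, and the real content of the theorem is the lower bound $\gg M^{1/2}/\log M$ in the second estimate, to which we now turn.

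For the second estimate, fix one of the families and let $k=f(\ell)$ be the parametrization appearing in its main Theorem, so that $f(\ell)$ is one of $-(1+16\ell^{2})$, $18\ell^{2}$, $-\tfrac{1}{4}(1+27\ell^{2})$; in each case $f$ is a quadratic polynomial, injective on the positive integers. The first and only delicate step is a compatibility check: one verifies that the congruence and prime-divisor conditions imposed on $\ell$ in that main Theorem force $k=f(\ell)$ to satisfy \emph{all} the hypotheses of the corresponding Assumption as well. For instance, if $\ell$ is odd with $\ell\not\equiv\pm2\pmod5$ then $-(1+16\ell^{2})\equiv-1\pmod8$ and $-(1+16\ell^{2})\not\equiv0\pmod{3,5,7}$, which are exactly the hypotheses of Assumption I; the analogous purely mechanical verifications go through for the other two families. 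Consequently, for every such admissible $\ell$ one simultaneously has $\mathcal{U}_{k}(\textbf{\textup{A}}_{\Zbb})\ne\emptyset$ (from the Assumption) and $\mathcal{U}_{k}(\textbf{\textup{A}}_{\Zbb})^{A}=\emptyset$ for the explicit subgroup $A\subseteq\textup{Br}_{1}\,U_{k}$ of the main Theorem; since $\mathcal{U}_{k}(\textbf{\textup{A}}_{\Zbb})^{\textup{Br}}\subseteq\mathcal{U}_{k}(\textbf{\textup{A}}_{\Zbb})^{A}$, it follows that $\mathcal{U}_{k}(\textbf{\textup{A}}_{\Zbb})^{\textup{Br}}=\emptyset$. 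Thus each admissible $\ell$ yields a $k$ counted on the left-hand side.

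It then remains to count admissible $\ell$. These are subject to finitely many congruence conditions to various moduli, together with a constraint on the prime divisors of $\ell$; the plan is to restrict to $\ell=p$ prime, which turns the prime-divisor constraint into one further congruence on $p$. One checks that all these congruences are jointly satisfiable, so by the Chinese remainder theorem the admissible primes are precisely those in a single arithmetic progression $p\equiv a\pmod q$ with $\gcd(a,q)=1$. Since $f$ is quadratic, $|f(\ell)|\le M$ is equivalent to $|\ell|\le c\sqrt{M}$ for an explicit constant $c>0$, so by the prime number theorem in arithmetic progressions the number of admissible primes $p\le c\sqrt{M}$ is $\sim\frac{1}{\phi(q)}\cdot\frac{c\sqrt{M}}{\log(c\sqrt{M})}\gg\frac{M^{1/2}}{\log M}$. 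Distinct primes $p$ give distinct values $k=f(p)$, whence the count is $\gg M^{1/2}/\log M$ for each family, as claimed.

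The substantive part of the argument is the compatibility bookkeeping described above — ensuring that for one and the same $k$ one has both everywhere-local integral points and a genuine Brauer--Manin obstruction — together with checking that the accumulated congruences still leave a full progression of primes; the analytic ingredient is completely standard. I do not expect to obtain a matching upper bound for the second count: that would require understanding the transcendental Brauer groups of these K3 surfaces (cf.\ the remark closing Section 3.3), which is not attempted here.
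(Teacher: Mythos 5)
Your proposal is correct and follows essentially the same route as the paper: the first count comes from the positive density of $k$ satisfying the congruence conditions of Assumptions I--III, and the second from restricting to prime $\ell$ in the quadratic parametrizations $k=f(\ell)$ and applying the prime number theorem in arithmetic progressions to count primes up to $c\sqrt{M}$. In fact you are more explicit than the paper on two points it leaves implicit --- the compatibility check that the hypotheses of the obstruction theorems also guarantee $\mathcal{U}_{k}(\textbf{\textup{A}}_{\Zbb})\neq\emptyset$, and the passage from $\mathcal{U}_{k}(\textbf{\textup{A}}_{\Zbb})^{A}=\emptyset$ to $\mathcal{U}_{k}(\textbf{\textup{A}}_{\Zbb})^{\textup{Br}}=\emptyset$ --- so no further changes are needed.
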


\begin{proof}
For the first approximation, the result follows directly from the fact that Assumptions I, II, III only give finitely many congruence conditions on $k$, so the total numbers of $k$ are always a proportion of $M$.

For the second approximation, we only give an asymptotic lower bound with the condition that $\ell$ is a \textbf{prime}. The result follows from the fact that as $M \rightarrow +\infty$, $|k|$ is approximately a multiple of $\ell^{2}$, and the number of primes less than $\sqrt{N}$ (here $N$ is a proportion of $M$) satisfying finitely many congruence conditions is asymptotically equal to $\ds\frac{\sqrt{M}}{\log M}$, up to a constant factor (see \cite{Apt76}, Section 7.9).
\end{proof}

\begin{remark}
Continuing from a previous remark, it would be interesting if one can find a way to include the transcendental Brauer group into the counting result, which would help us consider the Brauer--Manin set with respect to the whole Brauer group instead of only its algebraic part.
\end{remark}

\section{Further remarks}
In this section, we compare the results that we obtain in this paper with those in the previous papers studying Markoff surfaces, namely \cite{GS22}, \cite{LM20}, \cite{CTWX20}, and \cite{Dao22}. 

\subsection{Existence of the Brauer--Manin obstruction}
First of all, recall that in the case of Markoff surfaces, we see from \cite{LM20} that the number of counterexamples to the integral Hasse principle which can be explained by the Brauer--Manin obstruction is asymptotically equal to $M^{1/2}/(\log M)^{1/2}$; this number is also the asymptotic lower bound for the number of Markoff surfaces such that there is no Brauer--Manin obstruction to the integral Hasse principle, as done in \cite{CTWX20} (slightly better than the result $M^{1/2}/\log M$ in \cite{LM20}). 

We begin our study in the case of Markoff-type K3 surfaces by the following two results.

\begin{proposition}
For $k \in \Zbb$, let $W_{k} \subset \Pbb^{1} \times \Pbb^{1} \times \Pbb^{1}$ be the MK3 surfaces defined over $\Qbb$ by the $(2,2,2)$-form 
\begin{equation}
    F_{3}(x,y,z) = x^{2} + y^{2} + z^{2} + 4(x^{2}y^{2} + y^{2}z^{2} + z^{2}x^{2}) - 16x^{2}y^{2}z^{2} - k = 0.
\end{equation}
Denote by $\mathcal{U}_{k}$ the integral model of $U_{k}$ defined over $\Zbb$ by the same equation. If $k$ satisfies the conditions:
\begin{enumerate}
    \item $k = \ell(\ell+1)$ where $\ell \in \Zbb$ such that $\ell \equiv 5$ \textup{mod} $8$, $\ell \equiv 4$ \textup{mod} $27$, $\ell \equiv 1$ \textup{mod} $35$, and $\ell \not \equiv 0, -1$ \textup{mod} $37$;
    \item $p \equiv \pm 1, 3$ \textup{mod} $8$ for any prime divisor $p$ of $2\ell + 1$,
\end{enumerate}
then there is a Brauer--Manin obstruction to the integral Hasse principle on $\mathcal{U}_{k}$ with respect to the subgroup $\mathcal{A} \subset \textup{Br}_{1}\,U_{k}/\textup{Br}_{0}\,U_{k}$ generated by the elements $\mathcal{A}_{1} = (4x^{2}+1, 2(4k+1))$ and $\mathcal{A}_{2} = (4y^{2}+1, 2(4k+1))$, i.e., $\mathcal{U}_{k}(\Zbb) \subset \mathcal{U}_{k}(\textbf{\textup{A}}_{\Zbb})^{\mathcal{A}} = \emptyset$.
\end{proposition}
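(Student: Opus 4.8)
The plan is to mimic exactly the structure of the proof of Theorem 4.8, since the setup here is almost identical: the surface is the third family $F_3 = 0$, the quaternion algebras are $\mathcal{A}_1 = (4x^2+1, 2(4k+1))$ and $\mathcal{A}_2 = (4y^2+1, 2(4k+1))$, and the only change from Theorem 4.8 is the sign in the second slot ($+2(4k+1)$ rather than $-2(4k+1)$) and the parametrization $k = \ell(\ell+1)$ in place of $k = -\tfrac14(1+27\ell^2)$, so that now $2(4k+1) = 2(4\ell^2 + 4\ell + 1) = 2(2\ell+1)^2$. First I would record the algebraic identity
\[
(4x^{2}+1)(4y^{2}+1)(4z^{2}+1) = (4k+1) + 128x^{2}y^{2}z^{2},
\]
which is (12) from the excerpt and is satisfied on $U_k$; combined with the two companion identities obtained by permuting the roles of the coordinates and the square factorizations already used in the proof of Theorem 3.8, this gives the relation $\mathcal{A}_1 + \mathcal{A}_2 + \mathcal{A}_3 = 0$ in $\textup{Br}_1\,U_k/\textup{Br}_0\,U_k$, where $\mathcal{A}_3 = (4z^2+1, 2(4k+1))$; that these classes actually lie in $\textup{Br}_1\,U_k$ (i.e. are unramified) follows from Theorem 3.8 and the residue computations there, since $2(4k+1) = 2(2\ell+1)^2$ differs from $-2(4k+1)$ only by $-1$, which is handled identically along each of the fifteen generating divisors.

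Next I would verify local solubility: by (the analogue of) Proposition 4.6 one checks that the congruence conditions $\ell \equiv 5 \bmod 8$, $\ell \equiv 4 \bmod 27$, $\ell \equiv 1 \bmod 35$, $\ell \not\equiv 0,-1 \bmod 37$, together with condition (2), are chosen precisely so that $k = \ell(\ell+1)$ lands in the congruence classes guaranteeing $\mathcal{U}_k(\textbf{\textup{A}}_{\Zbb}) \neq \emptyset$ — this is a matter of reducing the explicit modular conditions and invoking the Hensel/Hasse--Weil arguments already carried out there.

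The heart of the argument is the computation of $\sum_{p} \textup{inv}_p\,\mathcal{A}(x,y,z)$ for an arbitrary adelic integral point. At $p = \infty$ one has $4x^2+1 > 0$, so $\textup{inv}_\infty = 0$. At $p = 2$: from $k \equiv 1 \bmod 4$ (which follows from $\ell \equiv 5 \bmod 8$, giving $k = \ell(\ell+1) \equiv 5 \cdot 6 \equiv 30 \equiv 2 \bmod 8$ — here one must double-check the $2$-adic valuation carefully, as $\ell+1 \equiv 6 \bmod 8$ is even) exactly two coordinates are even, so $4x^2+1 \equiv 1 \bmod 8$ for an odd coordinate, and $\textup{inv}_2\,\mathcal{A} = (1, 2(4k+1))_2 = 0$. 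At $p = 3$: from $k \equiv 4\cdot 5 \equiv 2 \bmod 3$ (via $\ell \equiv 4 \bmod 27$) all coordinates are units mod $3$, and $2(4k+1) = 2(2\ell+1)^2$ with $2\ell+1$ a $3$-adic unit, so $\textup{inv}_3\,\mathcal{A} = (4x^2+1, 2)_3$; one computes $4x^2+1 \bmod 3$ and checks this equals $\tfrac12$. For $p \geq 5$: since $2(4k+1) = 2(2\ell+1)^2$, the class $(4x^2+1, 2(4k+1))_p = (4x^2+1, 2)_p$ if $p \nmid 2\ell+1$, which vanishes unless $p = 2$ — wait, more carefully: $(4x^2+1, 2(2\ell+1)^2)_p = (4x^2+1, 2)_p$, and since $p$ is odd this is trivial whenever $4x^2+1$ is a $p$-adic unit; if $p \mid (4x^2+1)$ then from (12) one deduces $2(4k+1)$ is a square in $\Qbb_p^\times$ when $p \nmid 2\ell+1$, killing the symbol; and if $p \mid (2\ell+1)$ then condition (2) forces $(2, p)_p = 0$. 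Summing, the only surviving contribution is $\tfrac12$ at $p = 3$, so $\sum_p \textup{inv}_p\,\mathcal{A}(x,y,z) = \tfrac12 \neq 0$, whence $\mathcal{U}_k(\Zbb) \subseteq \mathcal{U}_k(\textbf{\textup{A}}_{\Zbb})^{\mathcal{A}} = \emptyset$.

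The main obstacle I anticipate is purely bookkeeping rather than conceptual: one must get the $2$-adic and $3$-adic valuations of $k = \ell(\ell+1)$ exactly right under the stated congruences on $\ell$, and confirm that with the new sign $+2(4k+1) = 2(2\ell+1)^2$ the nontrivial local invariant indeed relocates to $p = 3$ (as in Theorem 4.8) and does not pick up a compensating contribution elsewhere — in particular one must be careful that the condition $p \equiv \pm 1, 3 \bmod 8$ in hypothesis (2) is exactly what makes $2$ a square mod every prime divisor of $2\ell+1$, so that no stray local invariant at such $p$ appears. Once these elementary Hilbert-symbol computations are pinned down, the global reciprocity sum gives the claimed obstruction.
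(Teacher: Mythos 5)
Your overall strategy — transplant the proof of Theorem 4.8, check unramifiedness, and sum local invariants — is exactly the paper's (its proof is one line: ``similar as usual, with notice that only the local invariant at $p=2$ is nonzero''). But your two key local computations are both wrong, and in particular you locate the obstruction at the wrong prime. At $p=2$: from $\ell \equiv 5 \bmod 8$ one gets $k = \ell(\ell+1) \equiv 5\cdot 6 \equiv 6 \bmod 8$ (not $2$, and certainly not $1 \bmod 4$), so $x^2+y^2+z^2 \equiv 2 \bmod 4$ forces exactly two coordinates to be \emph{odd}, not two even. For an odd coordinate $x$ one has $4x^2+1 \equiv 5 \bmod 8$ (not $1 \bmod 8$), and since $2(4k+1) = 2(2\ell+1)^2$ is $2$ times a square, $\operatorname{inv}_2(4x^2+1,2(4k+1)) = \operatorname{inv}_2(5,2) = \tfrac12$. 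So the nontrivial contribution is at $p=2$, attached to the two odd coordinates; this is also precisely why the statement needs the subgroup generated by \emph{both} $\mathcal{A}_1$ and $\mathcal{A}_2$ (the algebra attached to the even coordinate has trivial invariant, so a single generator does not suffice). At $p=3$: since $\ell \equiv 4 \bmod 27$ gives $2\ell+1 \equiv 9 \bmod 27$, the element $2(4k+1)=2(2\ell+1)^2$ has \emph{even} $3$-adic valuation and is, up to squares, the unit $2$; moreover $4x^2+1 \equiv x^2+1 \not\equiv 0 \bmod 3$ is always a unit. A Hilbert symbol of two units at an odd prime is trivial, so $\operatorname{inv}_3 = 0$, not $\tfrac12$. (The contrast with Theorem 4.8 is that there $-2(4k+1)=54\ell^2$ has \emph{odd} $3$-adic valuation, which is what produced the $\tfrac12$ at $p=3$; changing the parametrization to $k=\ell(\ell+1)$ relocates the obstruction to $p=2$, not the other way around.)

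A smaller but real error: condition (2) does not ``make $2$ a square mod every prime divisor of $2\ell+1$''. For $p \equiv 3 \bmod 8$ we have $\left(\tfrac{2}{p}\right) = -1$; the symbol at such $p$ vanishes for a different reason, namely that $p \equiv 3 \bmod 4$ forces $p \nmid 4x^2+1$ (as $-1$ is a nonresidue), so the relevant valuation is zero. Your final total $\sum_p \operatorname{inv}_p = \tfrac12$ agrees with the correct answer only by coincidence; as written the argument would not survive the corrections, since you would then need the parity case analysis at $p=2$ to see that at least one of $\mathcal{A}_1,\mathcal{A}_2$ obstructs each adelic point.
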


\begin{proof}
The proof is similar as usual, with notice that only the local invariant at $p = 2$ is nonzero which makes the total sum of invariants nonzero, hence a contradiction.
\end{proof}

\begin{proposition}
For $k \in \Zbb$, let $W_{k} \subset \Pbb^{1} \times \Pbb^{1} \times \Pbb^{1}$ be the MK3 surfaces defined over $\Qbb$ by the $(2,2,2)$-form 
\begin{equation}
    F_{3}(x,y,z) = x^{2} + y^{2} + z^{2} + 4(x^{2}y^{2} + y^{2}z^{2} + z^{2}x^{2}) - 16x^{2}y^{2}z^{2} - k = 0.
\end{equation}
Denote by $\mathcal{U}_{k}$ the integral model of $U_{k}$ defined over $\Zbb$ by the same equation. If $k$ satisfies the conditions:
\begin{enumerate}
    \item $k = \ell(\ell+1)$ where $\ell \in \Zbb$ such that $\ell \equiv 3$ \textup{mod} $8$, $\ell \equiv 4$ \textup{mod} $27$, $\ell \equiv 1$ \textup{mod} $35$, and $\ell \not \equiv 0, -1$ \textup{mod} $37$;
    \item $p \equiv \pm 1, 3$ \textup{mod} $8$ for any prime divisor $p$ of $2\ell + 1$,
\end{enumerate}
then there is \textbf{no} Brauer--Manin obstruction to the integral Hasse principle on $\mathcal{U}_{k}$ with respect to the subgroup $\mathcal{A} \subset \textup{Br}_{1}\,U_{k}/\textup{Br}_{0}\,U_{k}$ generated by the elements $\mathcal{A}_{1} = (4x^{2}+1, 2(4k+1))$ and $\mathcal{A}_{2} = (4y^{2}+1, 2(4k+1))$, i.e., $\mathcal{U}_{k}(\textbf{\textup{A}}_{\Zbb})^{\mathcal{A}} \not= \emptyset$.
\end{proposition}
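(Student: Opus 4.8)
The plan is to exhibit one adelic integral point of $\mathcal{U}_k$ lying in $\mathcal{U}_k(\textbf{\textup{A}}_{\Zbb})^{\mathcal{A}}$. Since $\mathcal{A}$ is generated by $\mathcal{A}_1$ and $\mathcal{A}_2$, and since the identity $(4x^{2}+1)(4y^{2}+1)(4z^{2}+1)=(4k+1)+128x^{2}y^{2}z^{2}$ on $U_k$ yields the relation $\mathcal{A}_1+\mathcal{A}_2+\mathcal{A}_3=0$ with $\mathcal{A}_3=(4z^{2}+1,2(4k+1))$, it suffices to produce an adelic point orthogonal to both $\mathcal{A}_1$ and $\mathcal{A}_2$. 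The key simplification is the elementary identity $4k+1=4\ell(\ell+1)+1=(2\ell+1)^{2}$, so that $2(4k+1)=2(2\ell+1)^{2}$ differs from $2$ by a square in every completion; hence in $\textup{Br}\,U_k$ one has $\mathcal{A}_1=(4x^{2}+1,2)$ and $\mathcal{A}_2=(4y^{2}+1,2)$. That these quaternion classes genuinely extend to $\textup{Br}_1\,U_k$ is checked exactly as in the proof of Theorem 3.8: along $\{4x^{2}+1=0\}$ the displayed relation forces $4k+1=32y^{2}z^{2}$, so $2(4k+1)=(8yz)^{2}$ is a square in that residue field (hence no residue), while along every other prime divisor of $U_k$ one of the two arguments of the symbol is a unit.

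First I would establish $\mathcal{U}_k(\textbf{\textup{A}}_{\Zbb})\neq\emptyset$ (Step 1); this is the analogue of Proposition 4.3 and is what consumes hypotheses (1)--(2). The explicit congruences for $k$ modulo $8,27,35,37$ in (1) give non-singular $\Zpbb$-points for $p\in\{2,3,5,7,37\}$, and for the remaining primes one specializes a coordinate to get a genus-$1$ curve on the surface and applies the Hasse--Weil bound, with (2) ensuring the relevant fiber is smooth or at least carries a smooth $\Fpbb$-point. The crucial divergence from Proposition 5.2 is at $p=2$: because $\ell\equiv 3\pmod 8$ we have $(2\ell+1)^{2}\equiv 1\pmod{16}$, and reducing $(4x^{2}+1)(4y^{2}+1)(4z^{2}+1)=(2\ell+1)^{2}+128x^{2}y^{2}z^{2}$ modulo $16$ forces $x,y,z$ to be all even at any $\Zbb_{2}$-point (whereas $\ell\equiv 5\pmod 8$ gives $(2\ell+1)^{2}\equiv 9$, forcing exactly two $2$-adic units --- the source of the obstruction there).

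Next I would compute the local invariants of $\mathcal{A}_1=(4x^{2}+1,2)$ and $\mathcal{A}_2=(4y^{2}+1,2)$ at the adelic point constructed above (Step 2). At $\infty$: $4x^{2}+1>0$, so $\textup{inv}_{\infty}=0$. At $p=2$: $x$ and $y$ are even, hence $4x^{2}+1\equiv 4y^{2}+1\equiv 1\pmod{16}$ and $(4x^{2}+1,2)_{2}=(4y^{2}+1,2)_{2}=0$ --- this is exactly where $\ell\equiv 3\pmod 8$ is used. At an odd prime $p$: $(4x^{2}+1,2)_{p}$ can be nonzero only if $v_p(4x^{2}+1)$ is odd and $2$ is a non-square modulo $p$, i.e. $p\equiv\pm 3\pmod 8$; for $p\mid 2\ell+1$ this is excluded by (2), since $p\equiv\pm 1\pmod 8$ makes $2$ a square, while $p\equiv 3\pmod 8$ forces $p\equiv 3\pmod 4$, so $4x^{2}+1\equiv 0$ is insoluble mod $p$ and $4x^{2}+1$ is automatically a $p$-adic unit. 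For the finitely many other primes $p\equiv\pm 3\pmod 8$ I would choose the local component at $p$ with $4x^{2}+1$ and $4y^{2}+1$ both prime to $p$, which is possible since $\mathcal{U}_k$ is smooth and the loci $\{4x^{2}+1\equiv 0\}$, $\{4y^{2}+1\equiv 0\}$ are of codimension one. Summing gives $\sum_{p}\textup{inv}_{p}\mathcal{A}_1=\sum_{p}\textup{inv}_{p}\mathcal{A}_2=0$, hence $\sum_{p}\textup{inv}_{p}\mathcal{A}_3=0$ by the relation, so this adelic point lies in $\mathcal{U}_k(\textbf{\textup{A}}_{\Zbb})^{\mathcal{A}}$, which is therefore non-empty.

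The hard part will be Step 1, and inside it the $2$-adic solubility: the all-even reduction forced by $\ell\equiv 3\pmod 8$ makes every partial derivative of $F_{3}$ divisible by $4$ at such a point, so the crude form of Hensel's lemma does not apply directly and one must pin down a genuinely non-singular $\Zbb_{2}$-point --- or a smooth $\Zbb_{2}$-point on a suitably specialized genus-$1$ curve --- with attention to the precise power of $2$ involved. Everything else (the Hasse--Weil estimates at large primes, and the selection of local components at stray primes $\equiv\pm 3\pmod 8$ in Step 2) is routine, of the same nature as the computations in Proposition 4.3 and Theorems 4.4--4.6.
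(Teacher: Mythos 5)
Your proposal is correct and follows essentially the same route as the paper: the paper's own proof is a one-line reduction to the invariant computations of Theorem 4.6 and Proposition 5.1, the whole point being that $k=\ell(\ell+1)\equiv 4 \pmod 8$ forces all three coordinates of any $\Zbb_{2}$-point to be even, so the local invariants of $\mathcal{A}_{1},\mathcal{A}_{2}$ at $p=2$ are $(0,0)$ and the total sum of invariants vanishes, while local solubility follows from the congruence conditions as in Proposition 4.3. The only (harmless) divergence is at odd primes $p\equiv 5\pmod 8$ not dividing $2\ell+1$, where you propose selecting local components avoiding $\{4x^{2}+1\equiv 0\}$; in fact the relation $(4x^{2}+1)(4y^{2}+1)(4z^{2}+1)=(2\ell+1)^{2}+128x^{2}y^{2}z^{2}$ together with $-1\equiv(2x)^{2}$ already shows that $2(4k+1)$ is a unit square in $\Zpbb$ whenever $p\mid 4x^{2}+1$, so the invariant vanishes at \emph{every} local point and one recovers the paper's stronger assertion that $\mathcal{U}_{k}(\textbf{\textup{A}}_{\Zbb})^{\mathcal{A}}=\mathcal{U}_{k}(\textbf{\textup{A}}_{\Zbb})$.
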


\begin{proof}
The proof is similar as above, except that with $k = \ell(\ell+1) \equiv 4$ mod $8$, the local invariants at $p = 2$ are $(0,0)$, which makes the total sum of invariants always zero, hence the conclusion. In fact, it even shows that $\mathcal{U}_{k}(\textbf{\textup{A}}_{\Zbb})^{\mathcal{A}} = \mathcal{U}_{k}(\textbf{\textup{A}}_{\Zbb})$. 
\end{proof}

\begin{remark}
The first Proposition is only used to give a different family of Markoff-type K3 surfaces for which there is a Brauer--Manin obstruction and to make an interesting comparison with the second Proposition. In fact, one may give an elementary proof for the fact that the set of integral points is empty as follows. 

Assume that there is an integral point $(x,y,z) \in \mathcal{U}_{k}(\Zbb)$, then if $|x|,|y|,|z| \geq 1$, $F(x,y,z) < 0$ as $k = \ell(\ell+1) > 0$. Therefore, at least one of $x,y,z$ must be zero, and without loss of generality, we may assume that $z=0$. The equation is equivalent to 
$$ (4x^{2}+1)(4y^{2}+1) = (2\ell+1)^{2}. $$
As the right hand side is divisible by $3$ since $\ell \equiv 4$ mod $27$, so is the left hand side. However, this is a contradiction as $-1$ is not a square modulo $3$.
\\~\\
\indent The second Proposition only gives the result with respect to a proper subgroup of the Brauer group since we are not able to determine the whole (algebraic) Brauer--Manin set to prove whether it is nonempty or not. That is also the reason why we have not yet found a similar counting result to the ones in \cite{LM20} and \cite{CTWX20}.
\end{remark}

\subsection{Failure of strong approximation}
Next, we consider some cases when strong approximation, instead of the integral Hasse principle, fails, while integral points can exist.

\begin{proposition}
For $k \equiv 2$ mod $8$, let $W_{k} \subset \Pbb^{1} \times \Pbb^{1} \times \Pbb^{1}$ be the MK3 surfaces defined over $\Qbb$ by the $(2,2,2)$-form 
\begin{equation}
    F_{2}(x,y,z) = x^{2} + y^{2} + z^{2} - 4(x^{2}y^{2} + y^{2}z^{2} + z^{2}x^{2}) + 16x^{2}y^{2}z^{2} - k = 0.
\end{equation}
Let $\mathcal{U}_{k}$ be the integral model of $U_{k}$ defined over $\Zbb$ by the same equation. If $\mathcal{U}_{k}(\Zbb) \not= \emptyset$, while there is a Brauer--Manin obstruction to strong approximation on $\mathcal{U}_{k}$ with respect the element $\mathcal{A}_{1} = (4x^{2}-1, k)$, i.e., $\mathcal{U}_{k}(\textbf{\textup{A}}_{\Zbb})^{\mathcal{A}_{1}} \not= \mathcal{U}_{k}(\textbf{\textup{A}}_{\Zbb})$.
\end{proposition}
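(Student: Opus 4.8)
The plan is to show directly that the class $\mathcal{A}_{1} = (4x^{2}-1,k)$ does not pair to zero identically on $\mathcal{U}_{k}(\textbf{\textup{A}}_{\Zbb})$; since $\mathcal{U}_{k}(\Zbb) \neq \emptyset$ is assumed (so the integral Hasse principle holds), this is exactly the statement that $\mathcal{A}_{1}$ obstructs strong approximation, i.e.\ $\mathcal{U}_{k}(\textbf{\textup{A}}_{\Zbb})^{\mathcal{A}_{1}} \neq \mathcal{U}_{k}(\textbf{\textup{A}}_{\Zbb})$. First I would check that $\mathcal{A}_{1}$ really defines a class in $\textup{Br}\,U_{k}$: substituting $x = \pm\tfrac{1}{2}$ into $F_{2}$ gives $F_{2}(\pm\tfrac{1}{2},y,z) = \tfrac{1}{4}$ identically, which cannot equal an integer $k$, so $\{4x^{2}-1 = 0\}$ does not meet $U_{k}$ and $4x^{2}-1$ is a unit on $U_{k}$; as $k \in \Qbb^{\times}$ is constant, the tame-symbol formula shows $(4x^{2}-1,k)$ has trivial residue along every prime divisor of $U_{k}$, hence lies in $\textup{Br}\,U_{k}$, and it is clearly algebraic (split by $\Qbb(\sqrt{k})$). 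This places us in the setting of Section 4.1, so the pairing $\mathcal{U}_{k}(\textbf{\textup{A}}_{\Zbb}) \times \langle\mathcal{A}_{1}\rangle \to \Qbb/\Zbb$ is defined.

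The heart of the proof is the local invariant at $p = 2$. Reducing the equation $F_{2} = k$ modulo $4$ gives $x^{2}+y^{2}+z^{2} \equiv k \equiv 2 \pmod{4}$, and since a square is $\equiv 0$ or $1 \pmod 4$ this forces \emph{exactly one} of $x,y,z$ to lie in $2\Zbb_{2}$ at every point of $\mathcal{U}_{k}(\Zbb_{2})$. Next I would compute $\textup{inv}_{2}\,\mathcal{A}_{1}(x,y,z) = (4x^{2}-1,k)_{2}$: if $x \in \Zbb_{2}^{\times}$ then $4x^{2}-1 \equiv 3 \pmod 8$, whereas if $x \in 2\Zbb_{2}$ then $4x^{2}-1 \equiv -1 \pmod 8$; combined with $k \equiv 2 \pmod 8$, the standard formulas for the Hilbert symbol at $2$ give $(3,k)_{2} = -1$ and $(-1,k)_{2} = 1$, i.e.\ local invariants $\tfrac{1}{2}$ and $0$ respectively. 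Thus $\textup{inv}_{2}\,\mathcal{A}_{1}$ depends only on whether the first coordinate of the $2$-adic point lies in $\Zbb_{2}^{\times}$ — and since $F_{2}$ is symmetric in $x,y,z$ while exactly one of the three coordinates is $2$-adically even, permuting the coordinates of any given point of $\mathcal{U}_{k}(\Zbb_{2})$ produces points realizing \emph{both} values $0$ and $\tfrac{1}{2}$. In other words $\textup{inv}_{2}\,\mathcal{A}_{1}$ is non-constant on $\mathcal{U}_{k}(\Zbb_{2})$.

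Finally I would assemble the global statement. Fix an integral point $P = (x_{0},y_{0},z_{0}) \in \mathcal{U}_{k}(\Zbb)$; as a $\Qbb$-point it satisfies $\sum_{p \leq \infty}\textup{inv}_{p}\,\mathcal{A}_{1}(P) = 0$ by the reciprocity law, because $\mathcal{A}_{1}(P) \in \textup{Br}\,\Qbb$. Choose a permutation $\tau$ of the three coordinates so that the first coordinate of $P^{\tau}$ has the opposite $2$-adic parity to that of $x_{0}$ (possible since exactly one coordinate of $P$ lies in $2\Zbb_{2}$), and form the adelic point $(Q_{p})$ with $Q_{p} = P$ for all $p \neq 2$ and $Q_{2} = P^{\tau}$; it still lies in $\mathcal{U}_{k}(\textbf{\textup{A}}_{\Zbb})$ because $F_{2}$ is symmetric. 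Then $\sum_{p}\textup{inv}_{p}\,\mathcal{A}_{1}(Q) = \big(\sum_{p \neq 2}\textup{inv}_{p}\,\mathcal{A}_{1}(P)\big) + \textup{inv}_{2}\,\mathcal{A}_{1}(P^{\tau}) = \textup{inv}_{2}\,\mathcal{A}_{1}(P^{\tau}) - \textup{inv}_{2}\,\mathcal{A}_{1}(P) = \tfrac{1}{2} \neq 0$, so $Q \in \mathcal{U}_{k}(\textbf{\textup{A}}_{\Zbb}) \setminus \mathcal{U}_{k}(\textbf{\textup{A}}_{\Zbb})^{\mathcal{A}_{1}}$, which is the claim. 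The only genuinely computational step is the $2$-adic Hilbert symbol evaluation; everything else is formal. What I would emphasize is that, unlike an integral Hasse failure (Theorem 4.4), a strong approximation failure requires nothing more than a single place at which the local invariant genuinely varies, so no global congruence on $k$ beyond $k \equiv 2 \pmod 8$ (together with $\mathcal{U}_{k}(\Zbb) \neq \emptyset$) is needed.
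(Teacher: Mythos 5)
Your proof is correct and follows essentially the same route as the paper: start from an integral point, use reciprocity to see its invariants sum to zero, and then permute the coordinates of the $2$-adic component only (exploiting the symmetry of $F_{2}$ and the fact that $k\equiv 2 \pmod 8$ forces exactly one coordinate into $2\Zbb_{2}$) so that $\textup{inv}_{2}\,\mathcal{A}_{1}$ flips between $0$ and $\tfrac12$. The only additions beyond the paper's argument are your explicit verification that $4x^{2}-1$ is a unit on $U_{k}$ (so $\mathcal{A}_{1}\in\textup{Br}\,U_{k}$) and the mod-$4$ derivation of the parity pattern, both of which the paper leaves implicit.
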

To illustrate our choice of $k$, we can choose an integral point $(x,y,z) = (1,1,8) \in \mathcal{U}_{k}(\Zbb)$ to have $k = 574$.

\begin{proof}
Assume that we have $(x,y,z) \in \mathcal{U}(\Zbb)$, so with $k \equiv 2$ mod $8$ we can assume further without loss of generality that $x, y$ are odd and $z = 2a$ is even. Since $\mathcal{U}_{k}(\Zbb) \subset \mathcal{U}_{k}(\textbf{\textup{A}}_{\Zbb})^{\mathcal{A}_{1}}$, the set $\mathcal{U}_{k}(\textbf{\textup{A}}_{\Zbb})^{\mathcal{A}_{1}}$ is nonempty, and so is $\mathcal{U}_{k}(\textbf{\textup{A}}_{\Zbb})$. Viewing $(x,y,z)$ as an element of $\mathcal{U}_{k}(\textbf{\textup{A}}_{\Zbb})$ via the diagonal embedding, we can find another local integral point $(x',y',z')$ with the $2$-part $(x'_{2},y'_{2},z'_{2}) = (z_{2},x_{2},y_{2})$ and the same $p$-parts as those of $(x,y,z)$ for every $p \not= 2$, so that $\textup{inv}_{2}\,\mathcal{A}_{1}(x',y',z') = (4.4a^{2}-1,k)_{2} = 0 \not= 1/2 = (3,k))_{2} = (4x_{2}^{2}-1,k)_{2}$. Consequently,
$$ \sum_{p} \textup{inv}_{p}\,\mathcal{A}_{1}(x',y',z') \not= \sum_{p} \textup{inv}_{p}\,\mathcal{A}_{1}(x,y,z) = 0. $$
Therefore, $(x',y',z') \not\in \mathcal{U}_{k}(\textbf{\textup{A}}_{\Zbb})^{\mathcal{A}_{1}}$ and the result follows.
\end{proof}

\begin{proposition}
For $k \equiv 1$ mod $4$, let $W_{k} \subset \Pbb^{1} \times \Pbb^{1} \times \Pbb^{1}$ be the MK3 surfaces defined over $\Qbb$ by the $(2,2,2)$-form 
\begin{equation}
    F_{3}(x,y,z) = x^{2} + y^{2} + z^{2} + 4(x^{2}y^{2} + y^{2}z^{2} + z^{2}x^{2}) - 16x^{2}y^{2}z^{2} - k = 0.
\end{equation}
Let $\mathcal{U}_{k}$ be the integral model of $U_{k}$ defined over $\Zbb$ by the same equation. If $\mathcal{U}_{k}(\Zbb) \not= \emptyset$, then there is a Brauer--Manin obstruction to strong approximation on $\mathcal{U}_{k}$ with respect to the element $\mathcal{A}_{1} = (4x^{2}+1, -2(4k+1))$, i.e., $\mathcal{U}_{k}(\textbf{\textup{A}}_{\Zbb})^{\mathcal{A}_{1}} \not= \mathcal{U}_{k}(\textbf{\textup{A}}_{\Zbb})$.
\end{proposition}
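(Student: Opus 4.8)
The plan is to construct an adelic integral point lying in $\mathcal{U}_{k}(\textbf{\textup{A}}_{\Zbb})$ but not in $\mathcal{U}_{k}(\textbf{\textup{A}}_{\Zbb})^{\mathcal{A}_{1}}$, which is exactly the assertion $\mathcal{U}_{k}(\textbf{\textup{A}}_{\Zbb})^{\mathcal{A}_{1}} \neq \mathcal{U}_{k}(\textbf{\textup{A}}_{\Zbb})$. First recall that $\mathcal{A}_{1} = (4x^{2}+1,-2(4k+1))$ is a genuine element of $\textup{Br}\,U_{k}$: this was verified in the proof of Theorem 3.8 by a residue/tame-symbol computation via Grothendieck purity, which needs only that $U_{k}$ be smooth and that $4k+1 \neq 0$, both automatic for every integer $k$. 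Since $\mathcal{U}_{k}(\Zbb) \neq \emptyset$ by hypothesis, fix $(x,y,z) \in \mathcal{U}_{k}(\Zbb)$. Under the diagonal embedding it lies in $\mathcal{U}_{k}(\textbf{\textup{A}}_{\Zbb})^{\mathcal{A}_{1}}$, and, being a $\Qbb$-point, it satisfies $\sum_{p \leq \infty} \textup{inv}_{p}\,\mathcal{A}_{1}(x,y,z) = 0$ by the reciprocity law of global class field theory.

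The next step is a local analysis at $p = 2$. Reducing $F_{3}(x,y,z) = k$ modulo $4$ kills the terms with coefficients $4$ and $16$, so $x^{2}+y^{2}+z^{2} \equiv k \equiv 1 \pmod 4$; since a square in $\Zbb_{2}$ is $\equiv 0$ or $1 \pmod 4$, exactly one of the three coordinates of any point of $\mathcal{U}_{k}(\Zbb_{2})$ is a $2$-adic unit and the other two lie in $2\Zbb_{2}$. If $t \in 2\Zbb_{2}$ then $4t^{2}+1 \equiv 1 \pmod{16}$ is a square in $\Zbb_{2}^{\times}$, so $(4t^{2}+1,-2(4k+1))_{2} = 0$; if $t \in \Zbb_{2}^{\times}$ then $4t^{2}+1 \equiv 5 \pmod 8$, hence $(4t^{2}+1,2)_{2} = \tfrac12$ and $(4t^{2}+1,v)_{2} = 0$ for every $2$-adic unit $v$, so, writing $-2(4k+1) = 2 \cdot(\text{odd})$, one gets $(4t^{2}+1,-2(4k+1))_{2} = \tfrac12$. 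Therefore $\textup{inv}_{2}\,\mathcal{A}_{1}$ of a point of $\mathcal{U}_{k}(\Zbb_{2})$ equals $\tfrac12$ precisely when its first coordinate is the unique $2$-adic unit among the three, and $0$ otherwise.

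Finally I would alter only the $2$-adic component of $(x,y,z)$. Because $F_{3}$ is symmetric in $x,y,z$, every coordinate permutation is an automorphism of $\mathcal{U}_{k}$ over $\Zbb$, so applying a permutation $\pi$ to $(x_{2},y_{2},z_{2}) \in \mathcal{U}_{k}(\Zbb_{2})$ gives again a point of $\mathcal{U}_{k}(\Zbb_{2})$. Choose $\pi$ so that the unique unit coordinate lands in (resp. leaves) the first slot if it was previously absent from (resp. in) it; by the previous paragraph this flips $\textup{inv}_{2}\,\mathcal{A}_{1}$ by $\tfrac12$. Let $(x',y',z') \in \mathcal{U}_{k}(\textbf{\textup{A}}_{\Zbb})$ be the adelic point agreeing with $(x,y,z)$ at all $p \neq 2$ and equal to $\pi\cdot(x_{2},y_{2},z_{2})$ at $p = 2$. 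Then $\sum_{p \leq \infty} \textup{inv}_{p}\,\mathcal{A}_{1}(x',y',z') = \tfrac12 \neq 0$, so $(x',y',z') \notin \mathcal{U}_{k}(\textbf{\textup{A}}_{\Zbb})^{\mathcal{A}_{1}}$ while $(x',y',z') \in \mathcal{U}_{k}(\textbf{\textup{A}}_{\Zbb})$. This yields $\mathcal{U}_{k}(\textbf{\textup{A}}_{\Zbb})^{\mathcal{A}_{1}} \neq \mathcal{U}_{k}(\textbf{\textup{A}}_{\Zbb})$, i.e. the Brauer--Manin obstruction to strong approximation with respect to $\mathcal{A}_{1}$.

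The whole argument is bookkeeping, and I anticipate no real obstacle; the one point requiring care is the interplay between the $2$-adic Hilbert-symbol values and the parity count. The hypothesis $k \equiv 1 \pmod 4$ is used exactly in the parity count: it forces $x^{2}+y^{2}+z^{2} \equiv 1 \pmod 4$ and hence that each $2$-adic integral point has exactly one unit coordinate, which is what makes a coordinate permutation toggle $\textup{inv}_{2}\,\mathcal{A}_{1}$ between $0$ and $\tfrac12$; for $k$ in the residue classes $0$ or $3$ mod $4$ this toggling fails and $\mathcal{A}_{1}$ gives no such obstruction.
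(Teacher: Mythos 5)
Your proposal is correct and takes essentially the same route as the paper's proof: both fix an integral point, invoke global reciprocity to get $\sum_{p}\textup{inv}_{p}\,\mathcal{A}_{1}=0$, observe that $k\equiv 1 \pmod 4$ forces exactly one $2$-adic unit coordinate, and then permute the $2$-adic component to toggle $\textup{inv}_{2}\,\mathcal{A}_{1}$ between $0$ and $\tfrac12$. Your write-up is in fact slightly more careful than the paper's (which assumes ``WLOG $x$ is odd'' even though $\mathcal{A}_{1}$ singles out the $x$-coordinate), and your Hilbert-symbol computations at $p=2$ check out.
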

To illustrate our choice of $k$, we can choose an integral point $(x,y,z) = (1,4,4) \in \mathcal{U}_{k}(\Zbb)$ to have $k = -2911$. 

\begin{proof}
Assume that we have $(x,y,z) \in \mathcal{U}(\Zbb)$, so with $k \equiv 1$ mod $4$ we can assume further without loss of generality that $x$ is odd and $y = 2a, z = 2b$ are even. Since $\mathcal{U}_{k}(\Zbb) \subset \mathcal{U}_{k}(\textbf{\textup{A}}_{\Zbb})^{\mathcal{A}_{1}}$, the set $\mathcal{U}_{k}(\textbf{\textup{A}}_{\Zbb})^{\mathcal{A}_{1}}$ is nonempty, and so is $\mathcal{U}_{k}(\textbf{\textup{A}}_{\Zbb})$. Viewing $(x,y,z)$ as an element of $\mathcal{U}_{k}(\textbf{\textup{A}}_{\Zbb})$ via the diagonal embedding, we can find another local integral point $(x',y',z')$ with the $2$-part $(x'_{2},y'_{2},z'_{2}) = (y_{2},x_{2},z_{2})$ and the same $p$-parts as those of $(x,y,z)$ for every $p \not= 2$, so that $\textup{inv}_{2}\,\mathcal{A}_{1}(x',y',z') = (4.4a^{2}+1,-2(4k+1))_{2} = 0 \not= 1/2 = (5,-2(4k+1))_{2} = (4x_{2}^{2}+1,-2(4k+1))_{2}$. Consequently,
$$ \sum_{p} \textup{inv}_{p}\,\mathcal{A}_{1}(x',y',z') \not= \sum_{p} \textup{inv}_{p}\,\mathcal{A}_{1}(x,y,z) = 0. $$
Therefore, $(x',y',z') \not\in \mathcal{U}_{k}(\textbf{\textup{A}}_{\Zbb})^{\mathcal{A}_{1}}$ and the result follows.
\end{proof}

\begin{remark}
In the case of $F_{1}$, let $W_{k} \subset \Pbb^{1} \times \Pbb^{1} \times \Pbb^{1}$ be the MK3 surfaces defined over $\Qbb$ by the $(2,2,2)$-form 
\begin{equation}
    F_{1}(x,y,z) = x^{2} + y^{2} + z^{2} - 4x^{2}y^{2}z^{2} - k = 0
\end{equation}
and $\mathcal{U}_{k}$ be the integral model of $U_{k}$ defined over $\Zbb$ by the same equation. When $\mathcal{U}_{k}(\Zbb) \not= \emptyset$, it seems likely that the local invariant at $p = 2$ of the Brauer element $\mathcal{A} = (4x^{2}y^{2}-1, k+1) = (4y^{2}z^{2}-1, k+1) = (4z^{2}x^{2}-1, k+1)$ is constant for various choices of $2$-adic integral points. In other words, we may need to work with other primes to (possibly) find a Brauer--Manin obstruction to strong approximation.
\end{remark}

\subsection{Rational points on affine surfaces}
Finally, we study the existence of rational points on affine Markoff-type K3 surfaces. For Markoff surfaces, we know from \cite{Kol02}, \cite{LM20} and \cite{CTWX20} that there are always rational points on smooth affine Markoff sufaces; this comes from the fact that any smooth cubic surface over an infinite field $k$ is $k$-unirational as soon as it has a $k$-rational point. However, such a phenomenon does not happen for smooth affine MK3 surfaces, since their projective closures are elliptic surfaces and lie in $(\Pbb^{1})^{3}$ instead of $\Pbb^{3}$. We know there are always rational points at infinity for our families of MK3 surfaces, but we are not certain whether there are also rational points on the affine open subscheme or not. As a modest contribution to the existence problem of rational points, we have the following result.

\begin{proposition}
For $k \in \Zbb$, let $W_{k} \subset \Pbb^{1} \times \Pbb^{1} \times \Pbb^{1}$ be the MK3 surfaces defined over $\Qbb$ by the $(2,2,2)$-form 
\begin{equation}
    F_{1}(x,y,z) = x^{2} + y^{2} + z^{2} - 4x^{2}y^{2}z^{2} - k = 0.
\end{equation}
Let $\mathcal{U}_{k}$ be the integral model of $U_{k}$ defined over $\Zbb$ by the same equation. If $k$ satisfies the conditions:
\begin{enumerate}
    \item $k = -(1 + 16\ell^{2})$ where $\ell \in \Zbb$ such that $\ell$ is odd and $\ell \not\equiv \pm 2$ \textup{mod} $5$;
    \item $p \equiv 1$ \textup{mod} $4$ for any prime divisor $p$ of $\ell$,
\end{enumerate}
then there is no Brauer--Manin obstruction to the (rational) Hasse principle on $U_{k}$ with respect to the element $\mathcal{A} = (4x^{2}y^{2}-1, k+1) = (4y^{2}z^{2}-1, k+1) = (4z^{2}x^{2}-1, k+1)$ in $\textup{Br}_{1}\,U_{k}/\textup{Br}_{0}\,U_{k}$. In other words, $U_{k}(\textbf{\textup{A}}_{\Qbb})^{\mathcal{A}} \not= \emptyset$.
\end{proposition}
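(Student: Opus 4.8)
The plan is to produce an adelic point of $U_k$ lying in the Brauer--Manin set $U_k(\textbf{\textup{A}}_{\Qbb})^{\mathcal{A}}$, that is, a point whose local invariants of $\mathcal{A}$ sum to $0$. First I would check that $U_k(\textbf{\textup{A}}_{\Qbb}) \neq \emptyset$. Conditions (1)--(2) force $k \equiv -1$ mod $8$ (since $16\ell^2 \equiv 0$ mod $8$), and, using that $\ell$ is odd and $\ell \not\equiv \pm 2$ mod $5$, also $k \not\equiv 0$ mod $3,5,7$; these are exactly the hypotheses of Assumption~I, so $\mathcal{U}_k(\textbf{\textup{A}}_{\Zbb}) \neq \emptyset$ and hence $U_k(\textbf{\textup{A}}_{\Qbb}) \supseteq \mathcal{U}_k(\textbf{\textup{A}}_{\Zbb}) \neq \emptyset$. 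Fix an integral adelic point $(P_v)_v \in \mathcal{U}_k(\textbf{\textup{A}}_{\Zbb})$.

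Second, I would evaluate the local invariants $\textup{inv}_v\,\mathcal{A}(P_v)$, repeating the computation from the proof of the integral Brauer--Manin obstruction for the family $F_1$ in \S4.2.2. Using $k+1 = -16\ell^2$ and the identity $(4x^2y^2-1)(4y^2z^2-1) = (2y^2+1)^2 - (k+1)(2y)^2$: at $v = \infty$ the relation $4x^2y^2-1 = (x^2+y^2-k)/z^2 > 0$ on $U_k(\Rbb)$ (valid since $k<0$) gives $\textup{inv}_\infty\,\mathcal{A} = 0$; at every odd prime $p$ one gets $\textup{inv}_p\,\mathcal{A}(P_p) = 0$, because every prime divisor of $k+1$ (equivalently of $\ell$) is $\equiv 1$ mod $4$ by (2), every prime divisor of $k$ is automatically $\equiv 1$ mod $4$, and if $p \mid 4x^2y^2-1$ the identity shows $k+1 \in \Qbb_p^{\times 2}$; and at $p=2$, since all coordinates of $P_2$ are $2$-adic units one has $4x^2y^2-1 \equiv 3$ mod $8$, hence $\textup{inv}_2\,\mathcal{A}(P_2) = (3,-1)_2 = \tfrac12$. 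Therefore $\sum_v \textup{inv}_v\,\mathcal{A}(P_v) = \tfrac12$, reproving the integral obstruction.

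The decisive step is to alter a single local component of $(P_v)$ so that the new total invariant is $0$. Since the odd and archimedean invariants already vanish identically on integral points, I would look for a point $P_2' \in U_k(\Qbb_2)$ --- necessarily \emph{not} $2$-integral --- with $\textup{inv}_2\,\mathcal{A}(P_2') = 0$, i.e.\ with $4x^2y^2-1$ lying in the square class of a norm from $\Qbb_2(\sqrt{-1})$: concretely, parametrise by $x,y$, solve $z^2 = (x^2+y^2-k)/(1-4x^2y^2)$, and choose the $2$-adic valuations of $x$ and $y$ so that the right-hand side is a square in $\Qbb_2$ while $4x^2y^2-1$ falls in the correct class. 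Then the adelic point $(P_v')$ that agrees with $(P_v)$ away from $2$ and equals $P_2'$ at $2$ lies in $U_k(\textbf{\textup{A}}_{\Qbb})$ with $\sum_v \textup{inv}_v\,\mathcal{A}(P_v') = 0$, so $U_k(\textbf{\textup{A}}_{\Qbb})^{\mathcal{A}} \neq \emptyset$; as a fallback, one would instead flip the invariant at some auxiliary prime $p \equiv 3$ mod $4$ with $p \nmid k\ell$ by exhibiting a $\Qbb_p$-point with $v_p(4x^2y^2-1)$ odd. The main obstacle is precisely this last construction: in contrast with the integral problem, where only the invariant at $p=2$ on $\mathcal{U}_k(\Zbb_2)$ is relevant and is forced, here one has to understand all of $U_k(\Qbb_2)$ --- in particular points with coordinates of negative valuation --- and determine whether $\textup{inv}_2\,\mathcal{A}$ can in fact vanish on them. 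That local analysis is the heart of the matter.
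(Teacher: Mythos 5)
Your reduction steps are fine and match the paper's setup: conditions (1)--(2) do force $k\equiv -1 \bmod 8$ and $k\not\equiv 0 \bmod 3,5,7$, so Assumption I applies and you may start from an integral adelic point; the local invariant computation you redo is exactly that of the theorem on the integral obstruction for $F_{1}$, and the idea of keeping all components away from $2$ and swapping in a non-integral $\Qbb_{2}$-point with $\textup{inv}_{2}\,\mathcal{A}=0$ is precisely the paper's strategy. But the proposal stops at the decisive step: you pose the existence of such a $2$-adic point as a question (``determine whether $\textup{inv}_{2}\,\mathcal{A}$ can in fact vanish\dots That local analysis is the heart of the matter'') rather than answering it, and the fallback at an auxiliary prime $p\equiv 3 \bmod 4$ is likewise only sketched. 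Since everything else in the argument was already contained in the earlier obstruction theorem, this unperformed local analysis \emph{is} the proof; as it stands the claim $U_{k}(\textbf{\textup{A}}_{\Qbb})^{\mathcal{A}}\neq\emptyset$ is not established.

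The paper closes the gap by exhibiting a point $(x_{2},y_{2},z_{2})\in U_{k}(\Qbb_{2})$ with $\textup{v}_{2}(x_{2})=-1$, $\textup{v}_{2}(y_{2})=-3$, $\textup{v}_{2}(z_{2})=0$. For such a point one has $\textup{v}_{2}(4x_{2}^{2}y_{2}^{2})=-6$, so $4x_{2}^{2}y_{2}^{2}-1=2^{-6}(u-64)$ with $u$ an odd square, hence $u-64\equiv 1 \bmod 8$ and $4x_{2}^{2}y_{2}^{2}-1$ is a square in $\Qbb_{2}^{\times}$; therefore $(4x_{2}^{2}y_{2}^{2}-1,k+1)_{2}=0$. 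One still has to check that points with these valuations exist on $U_{k}(\Qbb_{2})$: writing $x=u/2$, $y=v/8$, $z=w$ with $u,v,w\in\Zbb_{2}^{\times}$, the equation becomes $v^{2}(1-u^{2}w^{2})=16(4k-u^{2}-4w^{2})$, and since $4k-u^{2}-4w^{2}$ is odd while $\textup{v}_{2}(1-u^{2}w^{2})$ can be set to $4$ and the resulting unit placed in the class of $1 \bmod 8$ by an appropriate choice of $uw$ modulo $32$, Hensel's lemma produces $v$. This verification (or an equivalent one) is what your write-up is missing.
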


\begin{proof}
The proof proceeds similarly as usual, with notice that for $p = 2$, besides the local integral point lifted from $(1,1,1) \in \mathcal{U}_{k}(\Zbb/8\Zbb)$ which gives the local invariants $(1/2,1/2)$, there exists another local point $(x_{2},y_{2},z_{2}) \in U_{k}(\Qbb_{2})$ with $\textup{v}_{2}(x_{2}) = -1, \textup{v}_{2}(y_{2}) = -3$, and $\textup{v}_{2}(z_{2}) = 0$ which gives the local invariants $(0,0)$.
\end{proof}

\begin{proposition}
For $k \in \Zbb$, let $W_{k} \subset \Pbb^{1} \times \Pbb^{1} \times \Pbb^{1}$ be the MK3 surfaces defined over $\Qbb$ by the $(2,2,2)$-form 
\begin{equation}
    F_{2}(x,y,z) = x^{2} + y^{2} + z^{2} - 4(x^{2}y^{2} + y^{2}z^{2} + z^{2}x^{2}) + 16x^{2}y^{2}z^{2} - k = 0.
\end{equation}
Let $\mathcal{U}_{k}$ be the integral model of $U_{k}$ defined over $\Zbb$ by the same equation. If $k$ satisfies the conditions:
\begin{enumerate}
    \item $k = 18\ell^{2}$ where $\ell \in \Zbb$ such that $\ell \not\equiv 0$ \textup{mod} $2,3$, $\ell \equiv 1$ \textup{mod} $5$, and $\ell \equiv 2$ \textup{mod} $7$;
    \item $p \equiv \pm 1$ \textup{mod} $8$ for any prime divisor $p$ of $\ell$,
\end{enumerate}
then there is no Brauer--Manin obstruction to the (rational) Hasse principle on $U_{k}$ with respect to the subgroup $A \subset \textup{Br}_{1}\,U_{k}/\textup{Br}_{0}\,U_{k}$ generated by the elements $\mathcal{A}_{1} = (4x^{2}-1, k)$ and $\mathcal{A}_{2} = (4y^{2}-1, k)$, i.e., $U_{k}(\textbf{\textup{A}}_{\Qbb})^{A} \not= \emptyset$.
\end{proposition}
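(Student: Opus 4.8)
The plan is to exhibit an adelic point of $U_{k}$ lying in the left kernel of the Brauer--Manin pairing with respect to \emph{both} $\mathcal{A}_{1}$ and $\mathcal{A}_{2}$; since $A=\langle\mathcal{A}_{1},\mathcal{A}_{2}\rangle$ and the pairing is additive in the Brauer variable, this already gives $U_{k}(\textbf{\textup{A}}_{\Qbb})^{A}\neq\emptyset$. The underlying idea is that the obstruction to the \emph{integral} Hasse principle found in the companion theorem for $F_{2}$ (with $\mathcal{A}_{1}=(4x^{2}-1,k)$, $\mathcal{A}_{2}=(4y^{2}-1,k)$ and $k=18\ell^{2}$) sits entirely at the prime $2$ and evaporates as soon as one is allowed to use a \emph{non-integral} $2$-adic point; at every other place one simply reuses the local integral points. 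More precisely, I would first note that conditions $(1)$--$(2)$ here imply the hypotheses of the existence proposition (``Assumption~II'') -- for $k=18\ell^{2}$ the congruences on $\ell$ modulo $8,27,5,7$ yield $k\equiv 2,-9,-2,2$ respectively, and the odd prime divisors of $k$ other than $3$ are exactly those of $\ell$ -- so $\mathcal{U}_{k}(\textbf{\textup{A}}_{\Zbb})\neq\emptyset$ and hence $U_{k}(\textbf{\textup{A}}_{\Qbb})\neq\emptyset$. The proof of the $F_{2}$ obstruction theorem shows that, for any adelic integral point, $\textup{inv}_{v}\,\mathcal{A}_{1}=\textup{inv}_{v}\,\mathcal{A}_{2}=0$ at every place $v\neq 2$ (including $v=\infty$, as $k>0$). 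Thus it suffices to produce a single $Q\in U_{k}(\Qbb_{2})$ with $\textup{inv}_{2}\,\mathcal{A}_{1}(Q)=\textup{inv}_{2}\,\mathcal{A}_{2}(Q)=0$: gluing $Q$ to the other local components of an adelic integral point gives an adelic point whose total invariant vanishes for $\mathcal{A}_{1}$ and $\mathcal{A}_{2}$, hence for all of $A$.

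Next I would pin down the local condition at $2$. Since $k=2(3\ell)^{2}$, for every $t\in\Qbb_{2}^{\times}$ one has $(4t^{2}-1,k)_{2}=(4t^{2}-1,2)_{2}$, which is trivial exactly when the odd part of $4t^{2}-1$ is $\equiv\pm1\pmod 8$. Examining this by the valuation of $t$: the symbol is trivial when $v_{2}(t)\geq 1$ (then $4t^{2}-1\equiv-1\pmod{16}$) and when $v_{2}(t)\leq-3$ (then $4t^{2}-1$ and $4t^{2}$ have the same odd part, a square $\equiv1\pmod 8$), whereas it equals $\tfrac12$ when $v_{2}(t)\in\{-2,-1,0\}$ along the integral locus (where exactly two of $x,y,z$ are $2$-adic units, as in the proof of the $F_{2}$ theorem). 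So I want $Q=(x,y,z)\in U_{k}(\Qbb_{2})$ with $v_{2}(x),v_{2}(y)\notin\{-2,-1,0\}$; writing the equation as $(4x^{2}-1)(4y^{2}-1)(4z^{2}-1)=4k-1$ (an odd unit of $\Qbb_{2}$), a congruence modulo $16$ shows that $v_{2}(x)$ and $v_{2}(y)$ cannot both be positive, so I aim for the profile $v_{2}(x)=v_{2}(y)=-3$ and $v_{2}(z)=-1$; the valuation balance then forces $v_{2}\big((2z)^{2}-1\big)=8$.

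To realise this profile I would substitute $x=x_{0}/8$, $y=y_{0}/8$, $z=z_{0}/2$ with $x_{0},y_{0},z_{0}\in\Zbb_{2}^{\times}$. Clearing denominators in $F_{2}=0$, the equation becomes
$$ \varepsilon\big(x_{0}^{2}y_{0}^{2}-16(x_{0}^{2}+y_{0}^{2})\big)+z_{0}^{2}=72\ell^{2},\qquad \varepsilon:=\frac{z_{0}^{2}-1}{2^{8}}. $$
Choose $z_{0}$ with $v_{2}(z_{0}^{2}-1)=8$ and $\varepsilon\equiv7\pmod 8$, i.e. $z_{0}^{2}\equiv 1+7\cdot2^{8}\pmod{2^{11}}$, which is solvable since $1+7\cdot2^{8}\equiv1\pmod 8$; in particular $\varepsilon\in\Zbb_{2}^{\times}$. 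Then for any odd $y_{0}$, the two sides of the displayed equation are congruent modulo $8$ for every odd $x_{0}$, while, viewed as a polynomial in $x_{0}$, it has derivative $2\varepsilon x_{0}(y_{0}^{2}-16)$ of $2$-adic valuation exactly $1$; Hensel's lemma then produces an exact solution $x_{0}\in\Zbb_{2}^{\times}$. The resulting $Q=(x_{0}/8,y_{0}/8,z_{0}/2)$ has $v_{2}(x)=v_{2}(y)=-3$ and $v_{2}(z)=-1$, so $\textup{inv}_{2}\,\mathcal{A}_{1}(Q)=\textup{inv}_{2}\,\mathcal{A}_{2}(Q)=0$, and the first paragraph finishes the argument.

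The real obstacle is this last $2$-adic step -- equivalently, showing that $(0,0)$ belongs to the image of $\big(\textup{inv}_{2}\,\mathcal{A}_{1},\textup{inv}_{2}\,\mathcal{A}_{2}\big)$ on $U_{k}(\Qbb_{2})$ -- because of the heavy cancellation among the lowest-order $2$-adic terms of the very degenerate form $F_{2}$: one must select the valuation profile so that the residual equation admits a Hensel solution, which is exactly what cannot be arranged using integral points (that is the content of the $F_{2}$ obstruction theorem). Everything else is routine: the reduction rests on the previously computed vanishing of the invariants away from $2$, and the analysis at $2$ is a Hilbert-symbol computation. This mirrors the $F_{1}$ case, where the same mechanism works with the profile $v_{2}(x)=-1$, $v_{2}(y)=-3$, $v_{2}(z)=0$.
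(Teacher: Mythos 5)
Your proof is correct, but it takes a genuinely different route from the paper's. The paper keeps the $2$-adic component integral (lifted from $(1,1,2)$ modulo $8$, where the pair of invariants of $(\mathcal{A}_1,\mathcal{A}_2)$ is $(\tfrac12,\tfrac12)$) and instead replaces the $3$-adic component by a non-integral point with $v_3(z)=-1$ and prescribed residues modulo $27$, engineered so that the invariants at $3$ become $(\tfrac12,\tfrac12)$ and cancel the contribution at $2$. You do the opposite: you keep integral components away from $2$ (where you correctly observe that both invariants vanish) and kill the obstruction at its source by producing a $2$-adic point with valuation profile $(v_2(x),v_2(y),v_2(z))=(-3,-3,-1)$, for which both symbols $(4x^2-1,k)_2=(4x^2-1,2)_2$ and $(4y^2-1,2)_2$ are trivial because the odd parts of $4x^2-1=(x_0^2-16)/16$ and $4y^2-1$ are $\equiv 1 \bmod 8$ and the powers of $2$ are even. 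Your Hensel step checks out: with $z_0^2=1+7\cdot 2^8$ the residual equation forces $x_0^2$ to equal a unit $\equiv 1 \bmod 8$, so a unit root exists. Your version has the mild advantage of being insensitive to which integral components are chosen at the other places, whereas the paper's cancellation at $3$ requires the $2$-adic component to contribute exactly $(\tfrac12,\tfrac12)$ rather than $(\tfrac12,0)$ or $(0,\tfrac12)$; the paper's is shorter to state because its $3$-adic computation is a one-line residue check. One small caveat: when you invoke Assumption II for local solubility, its condition (2) as literally stated applies to all odd prime divisors of $k$, and $3\mid k$ while $3\not\equiv\pm1\bmod 8$; this is harmless because the proof of that proposition only uses the congruence for $p\ge 7$ and treats $p=3$ via condition (1), which is the same reading the paper itself relies on.
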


\begin{proof}
The proof proceeds similarly as usual, with notice that for $p = 3$, besides the local integral point lifted from $(3,3,0) \in \mathcal{U}_{k}(\Zbb/27\Zbb)$ which gives the local invariants $(0,0)$, there exists another local point $(x_{3},y_{3},z_{3}) \in U_{k}(\Qbb_{3})$ with $\textup{v}_{3}(x_{3}) = \textup{v}_{3}(y_{3}) = 0, \textup{v}_{3}(z_{3}) = -1$ and $2z_{3} = \frac{a}{b}$ such that $a \equiv 2, b \equiv 3$, $x_{3} \equiv -2, y_{3} \equiv 13$ (mod $27$): this gives the local invariants $(1/2,1/2)$.
\end{proof}

\begin{proposition}
For $k \in \Zbb$, let $W_{k} \subset \Pbb^{1} \times \Pbb^{1} \times \Pbb^{1}$ be the MK3 surfaces defined over $\Qbb$ by the $(2,2,2)$-form 
\begin{equation}
    F_{3}(x,y,z) = x^{2} + y^{2} + z^{2} + 4(x^{2}y^{2} + y^{2}z^{2} + z^{2}x^{2}) - 16x^{2}y^{2}z^{2} - k = 0.
\end{equation}
Let $U_{k} \subset W_{k}$ be the affine open subscheme defined by $\{rst \not= 0\}$ over $\Qbb$ by the same equation. If $k$ satisfies the conditions:
\begin{enumerate}
    \item $k = -\frac{1}{4}(1 + 27\ell^{2})$ where $\ell \in \Zbb$ such that $\ell \equiv \pm 1$ \textup{mod} $8$, $\ell \equiv 1$ \textup{mod} $5$, $\ell \equiv 3$ \textup{mod} $7$, and $\ell \not\equiv \pm 10$ \textup{mod} $37$;
    \item $p \equiv \pm 1$ \textup{mod} $24$ for any prime divisor $p$ of $\ell$,
\end{enumerate}
then there is no Brauer--Manin obstruction to the (rational) Hasse principle on $U_{k}$ with respect to the subgroup $A \subset \textup{Br}_{1}\,U_{k}/\textup{Br}_{0}\,U_{k}$ generated by the elements $\mathcal{A}_{1} = (4x^{2}+1, -2(4k+1))$ and $\mathcal{A}_{2} = (4y^{2}+1, -2(4k+1))$, i.e., $U_{k}(\textbf{\textup{A}}_{\Qbb})^{A} \not= \emptyset$.
\end{proposition}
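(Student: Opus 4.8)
The plan is to reuse the local computations from the proof of Theorem 4.6 (the integral Brauer--Manin obstruction for $F_{3}$) and to exploit that $U_{k}$ has $3$-adic points which are not $3$-adically integral. First I would note that the hypotheses here are exactly those of Theorem 4.6, hence in particular those of Proposition 4.3 (Assumption III), so $\mathcal{U}_{k}(\textbf{\textup{A}}_{\Zbb})\neq\emptyset$; I would fix an integral adelic point and, using the $\mathfrak{S}_{3}$-symmetry of $F_{3}$ (a $\Zbb$-automorphism of $\mathcal{U}_{k}$), arrange that its $2$-adic component $(x_{2},y_{2},z_{2})$ has $z_{2}$ odd and $x_{2},y_{2}\in 2\Zbb_{2}$. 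For this adelic point, the local invariants of both $\mathcal{A}_{1}=(4x^{2}+1,-2(4k+1))$ and $\mathcal{A}_{2}=(4y^{2}+1,-2(4k+1))$ are zero at every place except $p=3$: at $p=\infty$ and at $p\geq 5$ this is the computation already made in the proof of Theorem 4.6 (which is symmetric in $x$ and $y$); at $p=2$ it holds because $4x_{2}^{2}+1$ and $4y_{2}^{2}+1$ lie in $1+16\Zbb_{2}\subset\Qbb_{2}^{\times 2}$; and at $p=3$, where all coordinates are units, both invariants equal $1/2$, which is exactly the obstruction producing $\mathcal{U}_{k}(\textbf{\textup{A}}_{\Zbb})^{A}=\emptyset$ in Theorem 4.6.

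Next I would replace the $3$-adic component by a non-integral point splitting both algebras. The plan is to take $P_{3}'=(1/3,\,1/3,\,z_{3})$: substituting $x=y=1/3$ into $F_{3}=k$ forces $z_{3}^{2}=(81k-22)/137$, and since $k\equiv 2\pmod 3$ under our hypotheses, this value is a $3$-adic unit congruent to $1$ modulo $3$, hence a square in $\Qbb_{3}$; so a suitable $z_{3}\in\Zbb_{3}^{\times}$ exists and $P_{3}'\in U_{k}(\Qbb_{3})$ (all three coordinates are finite and nonzero, so $P_{3}'$ lies in the open subscheme $\{rst\neq 0\}$). The reason for this choice is that $4\cdot(1/3)^{2}+1=13/9=13\cdot 3^{-2}$ is a square in $\Qbb_{3}$ (since $13\equiv 1\pmod 3$), whence $\textup{inv}_{3}\,\mathcal{A}_{1}(P_{3}')=\textup{inv}_{3}\,\mathcal{A}_{2}(P_{3}')=0$.

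Finally I would assemble the adelic point obtained from the integral one by substituting $P_{3}'$ at the prime $3$. By the two previous paragraphs it pairs trivially with $\mathcal{A}_{1}$ and with $\mathcal{A}_{2}$ at every place, so $\sum_{p}\textup{inv}_{p}\,\mathcal{A}_{i}=0$ for $i=1,2$; since $A$ is generated by $\mathcal{A}_{1}$ and $\mathcal{A}_{2}$, this adelic point lies in $U_{k}(\textbf{\textup{A}}_{\Qbb})^{A}$, which is therefore nonempty.

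I expect the main obstacle to be the construction of $P_{3}'$: the content of the argument is precisely that killing the obstruction forces one to split $\mathcal{A}_{1}$ and $\mathcal{A}_{2}$ simultaneously over $\Qbb_{3}$, which no $3$-adically integral point can achieve (there all coordinates are units and $4u^{2}+1\equiv 2\pmod 3$ is a nonsquare), while a suitable non-integral one can; and checking that the value forced on $z_{3}^{2}$ is a $3$-adic square is exactly where the congruence $k\equiv 2\pmod 3$ built into the shape of $k$ enters. The only other point requiring a bit of care is the bookkeeping at $p=2$, where the permutation symmetry of $F_{3}$ is used to place the unique $2$-adically odd coordinate at $z$, so that the $2$-adic invariants of $\mathcal{A}_{1}$ and $\mathcal{A}_{2}$ are automatically trivial.
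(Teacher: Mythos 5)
Your proposal is correct and follows essentially the same route as the paper: the paper's proof also keeps the integral adelic point everywhere except at $p=3$ and swaps in a $3$-adic point with $\textup{v}_{3}(x)<0$, $\textup{v}_{3}(y)<0$, $\textup{v}_{3}(z)=0$, for which both invariants vanish (your explicit point $(1/3,1/3,z_{3})$ with $z_{3}^{2}=(81k-22)/137\equiv 1 \bmod 3$ is precisely such a point, worked out in more detail than the paper provides). The supporting checks at $p=2$, $p=\infty$ and $p\geq 5$ match the computations in the paper's Theorem on the integral obstruction for $F_{3}$.
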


\begin{proof}
The proof proceeds similarly as usual, with notice that for $p = 3$, besides the local integral point lifted from $(1,1,1) \in \mathcal{U}_{k}(\Zbb/3\Zbb)$ which gives the local invariants $(1/2,1/2)$, there exists another local point $(x_{3},y_{3},z_{3}) \in U_{k}(\Qbb_{3})$ with $\textup{v}_{3}(x_{3}) < 0, \textup{v}_{3}(y_{3}) < 0$, and $\textup{v}_{3}(z_{3}) = 0$ which gives the local invariants $(0,0)$.
\end{proof}

\begin{remark}
Once again, we do not know whether the Brauer--Manin set with respect to the whole Brauer group is nonempty or not, but at least we know that there is no Brauer--Manin obstruction to the existence of rational points with respect to the Brauer subgroups that we are interested in. We believe that there should exist rational points on those families of affine MK3 surfaces, but we do not know how to prove or disprove this claim in general.
\end{remark}

\begin{example}
We consider the first surface in Example 4.1 where the affine MK3 surface in question contains no integral points (due to Brauer--Manin obstruction as previously shown) but indeed contains rational points. Let $W_{-17} \subset \Pbb^{1} \times \Pbb^{1} \times \Pbb^{1}$ be the MK3 surfaces defined over $\Qbb$ by the $(2,2,2)$-form 
\begin{equation}
    F_{1}(x,y,z) = x^{2} + y^{2} + z^{2} - 4x^{2}y^{2}z^{2} + 17 = 0.
\end{equation}
Denote by $\mathcal{U}_{-17}$ the integral model of $U_{-17}$ defined over $\Zbb$ by the same equation. Then $\mathcal{U}_{-17}(\Zbb) = \emptyset$; however, we can find a few rational points of small height in $U_{-17}(\Qbb)$ using SageMath \cite{SJ05}: (1/2, 49/24, 13/5), (1/3, 5/2, 29/8), (22/25, 23/16, 23/12), (27/29, 47/34, 15/8), (7/32, 46/15, 23/4).
\end{example}

\newpage

\textsc{Sorbonne Université and Université Paris Cité, CNRS, IMJ-PRG, F-75005 Paris, France}\\
\textit{E-mail address}: \href{mailto:quang-duc.dao@imj-prg.fr}{\texttt{quang-duc.dao@imj-prg.fr}}

\end{document}